\numberwithin{equation}{section}
\newtheorem{prop}{Proposition}[section]
\newtheorem{lem}[prop]{Lemma}
\newtheorem{sublem}[prop]{Sublemma}
\newtheorem{ass}[prop]{Assumption}
\newtheorem{ddd}[prop]{Definition}
\newtheorem{theorem}[prop]{Theorem}
\newtheorem{cor}[prop]{Corollary}
\newcommand{\Cl}{{\rm C}}
\newcommand{\gradu}{\mathop{\bf z}\nolimits}
\newcommand{\sym}{\mathbf S}
\newcommand{\dom}{\mathop{\rm dom}}
\newcommand{\Pj}{{\mathcal P}}
\newcommand{\E}{{\mathcal E}}
\newcommand{\F}{{\mathcal F}}
\newcommand{\D}{{\mathcal D}}
\newcommand{\Dk}{{\mathcal D}^{prod}}
\newcommand{\Ca}{{\mathcal C}}
\newcommand{\A}{{\mathcal A}}
\newcommand{\B}{{\mathcal B}}
\newcommand{\cH}{{\mathcal H}}
\newcommand{\inv}{\mathop{\mathcal I}\nolimits}
\newcommand{\C}{C^{\infty}}
\newcommand{\ra}{\partial}
\newcommand{\lr}{\longrightarrow}
\newcommand{\ten}{\otimes}
\newcommand{\ve}{\varepsilon}
\newcommand{\ov}{\overline}
\newcommand{\dira}{\partial\!\!\!/}
\newcommand{\dirac}{\partial\!\!\!/}
\DeclareMathOperator{\sign}{sign}
\DeclareMathOperator{\Ker}{Ker}
\DeclareMathOperator{\Ima}{Im}
\def\bbbr{{\rm I\!R}} 
\def\bbbc{{\rm I\!C}}
\def\bbbq{{\mathchoice {\setbox0=\hbox{$\displaystyle\rm Q$}\hbox{\raise
0.15\ht0\hbox to0pt{\kern0.4\wd0\vrule height0.8\ht0\hss}\box0}}
{\setbox0=\hbox{$\textstyle\rm Q$}\hbox{\raise
0.15\ht0\hbox to0pt{\kern0.4\wd0\vrule height0.8\ht0\hss}\box0}}
{\setbox0=\hbox{$\scriptstyle\rm Q$}\hbox{\raise
0.15\ht0\hbox to0pt{\kern0.4\wd0\vrule height0.7\ht0\hss}\box0}}{\setbox0=\hbox{$\scriptscriptstyle\rm Q$}\hbox{\raise
0.15\ht0\hbox to0pt{\kern0.4\wd0\vrule height0.7\ht0\hss}\box0}}}}
\def\bbbz{{\mathchoice {\hbox{$\sf\textstyle Z\kern-0.4em Z$}}
{\hbox{$\sf\textstyle Z\kern-0.4em Z$}}
{\hbox{$\sf\scriptstyle Z\kern-0.3em Z$}}
{\hbox{$\sf\scriptscriptstyle Z\kern-0.2em Z$}}}}
\def\bbbc{{\mathchoice {\setbox0=\hbox{$\displaystyle\rm C$}\hbox{\hbox
to0pt{\kern0.4\wd0\vrule height0.9\ht0\hss}\box0}}
{\setbox0=\hbox{$\textstyle\rm C$}\hbox{\hbox
to0pt{\kern0.4\wd0\vrule height0.9\ht0\hss}\box0}}
{\setbox0=\hbox{$\scriptstyle\rm C$}\hbox{\hbox
to0pt{\kern0.4\wd0\vrule height0.9\ht0\hss}\box0}}
{\setbox0=\hbox{$\scriptscriptstyle\rm C$}\hbox{\hbox
to0pt{\kern0.4\wd0\vrule height0.9\ht0\hss}\box0}}}}
\title{Product formula for Atiyah-Patodi-Singer index classes and higher signatures}
\author{Charlotte Wahl}
\begin{document}

\begin{abstract}
We define generalized Atiyah-Patodi-Singer boundary conditions of product type for Dirac operators associated to $C^*$-vector bundles on the product of a compact manifold with boundary and a closed manifold. We prove a product formula for the $K$-theoretic index classes, which we use to generalize the product formula for the topological signature to higher signatures.
\end{abstract} 

\maketitle
\markright{PRODUCT FORMULA FOR APS-CLASSES}

\section{Introduction}
It is an elementary fact from algebraic topology that the topological signature fulfills
$$\sign(M) \cdot \sign(N)=\sign(M\times N) \ ,$$
if $M$ is an oriented compact manifold with boundary and $N$ is an oriented closed manifold. In this paper we prove a similar product formula for higher signatures -- more generally: for the signature classes of the signature operator twisted by a flat $C^*$-vector bundle. (In the higher case this bundle is the Mishenko-Fomenko bundle.)

In the closed case the signature class equals the $K$-theoretic index of the signature operator. There are several definitions of a higher signature class for a manifold with boundary, which conjecturally give the same class (see \cite[\S 13 I]{lp7}): Two analytic ones (whose Chern characters agree), see \cite{llp}, and a topological definition based on $L$-theory \cite{llk}. We refer to the survey \cite{lp7} for a historical account. The basis for our considerations is the definition of the signature class as the index of the signature operator with generalized Atiyah-Patodi-Singer boundary conditions given by a symmetric spectral section \cite{lp4}\cite{lp6}. The class is well-defined only under certain homological conditions. We prove the following generalization of the above formula: Let $\A, \B$ be unital $C^*$-algebras. If $\F_M$ resp. $\F_N$ is a flat unitary $\A$- resp. $\B$-vector bundle on and even-dimensional manifold $M$ resp. $N$, then 
$$\sigma(M,\F_M) \ten \sigma(N,\F_N)=\sigma(M\times N,\F_M \boxtimes \F_N) \in K_0(\A\ten \B) \ ,$$
if both sides are defined. Here $\sigma(M,\F_M) \in K_0(\A)$ resp. $\sigma(N,\F_N) \in K_0(\B)$ are the signature classes. If $M$ or $N$ is odd-dimensional, there is a similar formula, however the signature depends then on the additional choice of a Lagrangian. The actual result we prove is slightly more general such that it applies to higher signatures (see \S \ref{highsig}).

The proof of the signature formula builds on a product formula for Atiyah-Patodi-Singer index classes (Theorem \ref{prod}), which is the main result of the first part of this paper (\S \ref{proddir}). We use a class of boundary conditions of Atiyah-Patodi-Singer type that generalizes the boundary conditions introduced in \cite{mp1}\cite{mp2} for families and adapted in \cite{lp2}\cite{lp6} to higher index theory. In this class we can associate to any boundary condition for a Dirac operator on $M$ a canonical boundary condition for a suitable product Dirac operator on the product $M \times N$. The proof of the product formula is based on $KK$-theoretical methods, in particular the relative index theorem \cite{bu}. It carries over to family index theory, where a product formula might also be of interest. A special case is the equality between the Dirac operator and its Dirac suspension, which was defined and established in \cite[\S 5]{mp2} in the family case and adapted to the noncommutative context in \cite[\S 3]{lp6}. (Note the following subtlety: In \cite{mp2}\cite{lp6} odd index classes were defined in terms of a suspension map originally due to Atiyah and Singer. Here we use a $KK$-theoretic approach, which is makes calculations more straightforward and allows to treat the even and odd case on an equal footing. The index classes defined by both approaches agree, see \cite[\S 9]{wa1}.) 

The product formula for Atiyah-Patodi-Singer classes has applications to the study of concordance classes of metrics of positive scalar curvature: Stolz defined bordism groups $R_n(\pi)$ for a finitely presented group $\pi$ (in fact, more generally for so-called supergroups) \cite{sto}\cite[\S 5]{rs}. These groups consist of equivalence classes of $n$-dimensional spin manifolds with boundary that are endowed with a reference map to $B\pi$ and with a metric of positive scalar curvature on the boundary. Taking the index of the Dirac operator twisted by the Mishenko-Fomenko bundle associated to the maximal group $C^*$-algebra yields a homomorphism $R_n(\pi) \to K_n(C_{max}^*\pi)$ (see \cite[\S 1.4]{bu}, with the real reduced $C^*$-algebra used there replaced by $C_{max}^*\pi$).
For finitely presented groups $\pi_1,\pi_2$ the Cartesian product induces a product $R_n(\pi_1) \times\Omega_m^{spin}(B\pi_2) \to R_{n+m}(\pi_1\times \pi_2)$. There is also an index map $\Omega_m^{spin}(B\pi_2) \to K_m(C^*_{max}\pi_2)$. By the product formula for Atiyah-Patodi-Singer classes these maps fit into a commuting diagram 
$$\xymatrix{
R_{n}(\pi_1) \times \Omega_m^{spin}(B\pi_2) \ar[r]\ar[d]& R_{n+m}(\pi_1 \times \pi_2) \ar[d] \\
K_n(C^*_{max}\pi_1) \ten K_m(C^*_{max} \pi_2) \ar[r]^{\ten} &K_{n+m}(C^*_{max}(\pi_1 \times \pi_2))\ .}$$
This can be applied to study the behavior of the concordance classes under Cartesian product, see \cite[Remark 0.7]{we} for related questions. We expect that our methods also work in $KO$-theory, which should be used here: The index maps in the diagram factor through $KO$-theory of the corresponding real maximal $C^*$-algebras. A special case of the analogue of the above diagram in $KO$-theory is the fact that the homomorphism $\varinjlim R_{n+8j}(\pi) \to KO_n(C^*_{\bbbr,max}\pi)$  is well-defined: The limit is induced by taking the product with a particular closed 8-dimensional manifold (the Bott manifold) \cite{sto}\cite[\S 5]{rs}. A more general diagram is given in the preprint \cite{sto}, which was never published. Also for the above diagram  (resp. its analogue in $KO$-theory) there seems to be no published proof.

A novelty used in the proof of the product formula for signature classes is a generalization of the definition of symmetric boundary conditions for the signature operator. Symmetric spectral sections, as introduced in \cite{lp4}\cite{lp6}, are symmetric with respect to a particular involution. The class of boundary conditions defined by symmetric spectral sections is not closed under taking products. We consider more general involutions and study the dependence of the involution. The results allow us to derive the product formula for the signature classes from the product formula for Atiyah-Patodi-Singer classes.

It would be interesting to have a similar product formula established for the topologically defined higher signatures. In general, the main advantage of the $K$-theoretical approach is that it also works for foliations, as noted in Remark 2 at the end of \cite{lp6}.

The methods of the present paper together with the product formula for $\eta$-forms proven in \cite{waaps} also lead to a product formula for the analytic higher $\rho$-invariants for the signature operator. (Details will be given elsewhere.) These were defined in \cite{waaps} motivated by a suggestion in \cite{lohigheta}. An alternative definition based on a different regularization can be given using the higher $\eta$-forms for the signature operator introduced in \cite{llp}. Topological higher $\rho$-invariants were previously introduced in \cite{we}. There Cartesian products were the motivating examples, and a product formula was mentioned. A connection to the analytic definition has not yet been established.

\subsection*{Conventions}

If not specified, a tensor product between $C^*$-algebras is understood as the spatial (=minimal) $C^*$-algebraic tensor product, and a tensor product between Hilbert $C^*$-modules is the exterior Hilbert $C^*$-module tensor product. In the few remaining cases the tensor product is assumed to be algebraic. A tensor product of graded spaces is graded. However, for operators we fix the following convention: If $A$ resp. $B$ are operators on graded vector spaces $H_1$ resp. $H_2$, then $A \ten B$ is the operator on $H_1 \ten H_2$ defined by using the {\it ungraded} tensor product, hence neglecting the grading. In contrast the operator $AB$ on $H_1 \ten H_2$ is defined via the graded tensor product as usual. Thus $AB=A \ten B^+ + A \gradu \ten B^-$, where $\gradu$ is the grading operator on $H_1$ and $B=B^+ +B^-$ with $B^{\pm}$ even resp. odd. In this spirit we usually omit tensor products when dealing with operators and write $A$ for $A \ten 1$ resp. $B$ for $1 \ten B^+ + \gradu \ten B^-$. We also usually omit the tensor product when dealing with morphisms between different spaces. In a graded context we tacitly endow ungraded spaces with the trivial $\bbbz/2$-grading (for which all elements are positive).

In order to avoid confusion we add indices to geometric operators as the de Rham operator. We will omit them sometimes when confusion seems unlikely.

\section{Product formula for Dirac classes}
\label{proddir}

We assume throughout the paper that $\A, \B$ are unital $C^*$-algebras. 

Let $M$ be an oriented Riemannian manifold with boundary $\ra M$ and product structure near the boundary. Denote by $M_{cyl}$ the corresponding manifold with cylindric end $Z_r \subset M_{cyl}$. That is, we assume that there is $\ve>0$ and an isometry $e:Z_r \cong (-\ve,\infty) \times \ra M$ such that $M_{cyl}\setminus e^{-1}((0,\infty) \times \ra M) =M$. The coordinate defined by the composition of $e$ with the projection onto $(-\ve,\infty)$ is denoted by $x_1$. We define $Z=\bbbr \times \ra M$. We set $U_{\ve}=e^{-1}((-\ve,0]\times \ra M) \subset M$ and denote by $p:U_{\ve}\to \ra M$ the composition of $e$ with the projection onto $\ra M$. The projection $Z \to \ra M$ will be denoted by $p$ as well.

Dirac operators over $C^*$-algebras are by now well-studied. It turns out that much of the classical theory carries over, see for example \cite{st} \cite{s} for relevant background material. 

Let $\E$ be a hermitian $\A$-vector bundle on $M$ (the scalar product on the fibers is assumed to be $\A$-valued). Then $\E$ is called a Dirac $\A$-bundle if the following conditions are fulfilled:
\begin{enumerate}
\item The bundle $\E$ is a Clifford module. This means that there is a left action of the Clifford bundle $\Cl(T^*M)$ on $\E$ commuting with the right action of $\A$ such that the $c(v)$ is a skewadjoint endomorphism on $\E$ for any $v \in T^*M$. If $M$ is even-dimensional, then $\E$ is assumed to be $\bbbz/2$-graded and $c(v)$ is assumed to be odd for any $v \in T^*M$. 
\item Furthermore $\E$ is endowed with a connection $\nabla^{\E}$ compatible with the hermitian product and fulfilling $c(\nabla^Mv)=[\nabla^{\E},c(v)]$. Here $\nabla^M$ is the Levi-Civit\`a connection. 
\end{enumerate}

Let $\E_M$ be a Dirac $\A$-bundle on $M$ and assume that $\E_M|_{U_{\ve}}=p^*(\E_M|_{\ra M})$ as (graded, if $M$ is even-dimensional) hermitian $\A$-vector bundle. Furthermore the connection on $\E_M|_{U_{\ve}}$ is assumed to be of product type. Let $\dirac_{M}:=c \circ \nabla^{\E_M}$ be the associated Dirac operator. 

The bundle $\E_M$ is $\bbbz/2$-graded if $M$ is even-dimensional. The grading operator is denoted by $\gradu_M$. We write $\E_{\ra M}:=\E_M^+|_{\ra M}$ if $M$ is even-dimensional and $\E_{\ra M}=\E|_{\ra M}$ if $M$ is odd-dimensional. 

The induced Clifford module structure on $\E_{\ra M}$ is given by $c_{\ra M}(v):=c_M(dx_1)c_M(v)$ for $v \in T^*\ra M \subset T^*M$ (the inclusion being defined via the metric). We denote the Dirac operator associated to $\E_{\ra M}$ by $\dira_{\ra M}$. If $M$ is odd-dimensional, the Dirac bundle $\E_{\ra M}$ is $\bbbz/2$-graded with grading operator $\gradu_{\ra M}:=ic_M(dx_1)$ and on $U_{\ve}$
\begin{align}
\label{boundopodd}
\dira_M= c_M(dx_1)(\ra_1 - \dira_{\ra M}) \ .
\end{align}
If $M$ is even-dimensional, we identify $\E^+|_{U_{\ve}}$ with $\E^-|_{U_{\ve}}$ via $ic(dx_1)$ and thus obtain an isomorphism
$$\E|_{U_{\ve}}\cong (\bbbc^+ \oplus \bbbc^-)  \ten (p^*\E_{\ra M}) \ .$$ Here $\bbbc^{\pm}$ denotes $\bbbc$ with grading induced by the grading operator $\pm 1$. 
On $U_{\ve}$
\begin{align}
\label{boundopev}
\dira_M&= c_M(dx_1)(\ra_1 -\gradu_M \dira_{\ra M}) \ .
\end{align}
Given $\dira_M$, the operator $\dira_{\ra M}$ is uniquely determined by these formulas and is called the boundary operator induced by $\dira_M$. In the following the boundary operator of a Dirac operator $\dira$ will sometimes be denoted by $B(\dira)$.

Write $\D_{\ra M}$ for the closure of $\dira_{\ra M}:\C(\ra M,\E_{\ra M}) \to L^2(\ra M,\E_{\ra M})$.

Now we introduce the boundary conditions:

Assume first that $M$ is even-dimensional. Then a selfadjoint operator $A \in B(L^2(\ra M,\E_{\ra M}))$ such that $\D_{\ra M}+A$ has a bounded inverse is called a trivializing operator for $\D_{\ra M}$ on $L^2(\ra M,\E_{\ra M})$. 

Define $\D_M(A)^+$ as the closure of 
$$\dirac_M^+:\{f \in \C(M,\E^+)~|~ 1_{\ge 0}(\D_{\ra M}+A)(f|_{\ra M})=0\} \to L^2(M,\E^-)\ .$$
Let $\D_M(A)^-$ be the adjoint of $\D_M(A)^+$. Then $\D_M(A)=\left(\begin{array}{cc} 0 & \D_M(A)^- \\ \D_M(A)^+ & 0 \end{array}\right)$ is a selfadjoint operator on $L^2(M,\E)=L^2(M,\E^+)\oplus L^2(M,\E^-)$.

If $M$ is odd-dimensional, an operator $A$ as above is called a trivializing operator if in addition it is odd with respect to $\gradu_{\ra M}$. Then the operator $\D_M(A)$ is defined as the closure of
$$\dirac_M:\{f \in \C(M,\E)~|~ 1_{\ge 0}(\D_{\ra M}+A)(f|_{\ra M})=0\} \to L^2(M,\E)\ .$$

The operator $\D_M(A)$ is a regular selfadjoint Fredholm operator with compact resolvents. (This can be shown as in \cite{wu}). 
Let $i$ be the parity of the dimension of $M$.
From the Baaj-Julg picture of $KK$-theory via unbounded Kasparov modules \cite[\S 17.11]{bl} it follows that there is an induced class $[\D_M(A)] \in KK_i(\bbbc,\A)  \cong K_i(\A)$, called the index (class) of $\D_M(A)$. 

We also need cylindric index classes:

Let $\chi:M_{cyl} \to [0,1]$ be a smooth function with support in $Z_r$ such that $\chi|_{\{x_1 \ge \ - 3\ve/4\}}=1$. We define $\D^{cyl}_M(A)$ as the closure of 
$$\dira_{\E}- c(dx_1)\chi A:\C_c(M,\E) \to L^2(M,\E)$$ 
if $M$ is odd-dimensional and as the closure of 
$$\dira_{\E}- c(dx_1)\chi \gradu A:\C_c(M,\E) \to L^2(M,\E)$$ 
if $M$ is even-dimensional.  
Again, $\D_{M}^{cyl}(A)$ is a regular selfadjoint Fredholm operator (see for example \cite{waaps} for a detailed discussion) and thus defines an element in $KK_i(\bbbc,\A)$. Here the resolvents are non-compact, hence the Baaj-Julg picture does not apply. See \cite[Def. 2.4]{wa1} for the relevant definition of the Kasparov class that will be used in the following. 

The following equality has been essentially established  in the even case in \cite[\S 10]{llp} and follows in the odd case from \cite[\S 3.3]{lp6} together with \cite[Lemma 9.2]{wa1}. We give a different proof here, whose method will also be used in the proof of the product formula for index classes, Theorem \ref{prod}. It is similar to the proof of \cite[Theorem 7.2]{llp}.

\begin{prop}
\label{eqAPScyl}
In $KK_i(\bbbc,\A)$
$$[\D_M(A)]=[\D_M^{cyl}(A)] \ .$$
\end{prop}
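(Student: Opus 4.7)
The plan is to build an operator homotopy linking the two Kasparov modules inside a common ambient Hilbert $\A$-module and then invoke homotopy invariance of $KK$-theory. Concretely, for $R \in [0,\infty]$ I would introduce the truncated cylindric manifold $M_R := M \cup_{\ra M} ([0,R] \times \ra M) \hookrightarrow M_{cyl}$ (with $M_0 = M$ and $M_\infty = M_{cyl}$) and define, for finite $R$, an operator $\D_R$ on $M_R$ as the closure of $\dirac_{\E} - c(dx_1)\chi_R \gradu A$ (and analogously without $\gradu$ in the odd case), where $\chi_R$ is a smooth cutoff supported in the cylindric region that agrees with $\chi$ for $x_1 \le R-1$ and vanishes near $\{R\}\times\ra M$, and with the generalized APS boundary condition $1_{\ge 0}(\D_{\ra M}+A)(f|_{\{R\}\times\ra M})=0$ imposed at the far end. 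For $R=0$ this recovers $\D_M(A)$, and for $R=\infty$ it recovers $\D_M^{cyl}(A)$.

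Next, I would verify that each $\D_R$ is a regular selfadjoint Fredholm operator by the same arguments used for $\D_M(A)$, so that its bounded transform $F(\D_R) := \D_R(1+\D_R^2)^{-1/2}$ represents a class in $KK_i(\bbbc,\A)$. After pushing each operator forward to $L^2(M_{cyl},\E)$ by extension by zero, the goal is to show that $R \mapsto F(\D_R)$ is norm continuous on $[0,\infty]$ through Fredholm operators modulo compacts in $\Kappa_{\A}(L^2(M_{cyl},\E))$. The key geometric input is the hypothesis that $\D_{\ra M}+A$ is invertible on $L^2(\ra M,\E_{\ra M})$; hence the cylindric model operator $c(dx_1)(\ra_1 - \gradu(\D_{\ra M}+A))$ (or its odd analog) is invertible. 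This yields uniform resolvent estimates in $R$ and, crucially, ensures compatibility between the generalized APS boundary condition at $\{R\}\times \ra M$ migrating outward and the $L^2$-Fredholm condition defining $\D_M^{cyl}(A)$ on the complete manifold.

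The main obstacle is the passage $R\to\infty$, where the underlying module grows and the boundary condition moves to infinity. Here I would exploit the spectral gap of $\D_{\ra M}+A$ to derive exponential decay in the $x_1$-direction, uniformly in $R$, of the resolvent kernels of $\D_R$ and of $\D_\infty$ along the cylinder. This decay, combined with standard parametrix constructions on the cylinder (separation of variables in the $\ra M$-direction), shows that truncation errors between $F(\D_R)$ and $F(\D_\infty)$ vanish in operator norm as $R\to\infty$ and gives uniform compactness of $F(\D_R)^2-1$. Once norm continuity of $R\mapsto F(\D_R)$ on $[0,\infty]$ is established, homotopy invariance of $KK$-theory produces $[\D_M(A)]=[\D_0]=[\D_\infty]=[\D_M^{cyl}(A)]$. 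Since the author announces that the same method drives the proof of Theorem \ref{prod}, I would formulate the continuity and decay estimates in sufficient generality to be reused there, in particular allowing the boundary operator $\D_{\ra M}$ to be replaced by a product-type operator on $\ra M \times N$.
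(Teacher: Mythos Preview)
Your approach is genuinely different from the paper's. The paper does \emph{not} build a homotopy by stretching the cylinder; instead it applies Bunke's relative index theorem. Concretely, the author introduces the full cylinder $Z=\bbbr\times\ra M$ and the half-cylinder $Z_l=(-\infty,0]\times\ra M$, puts on each the perturbed Dirac operator $\dira_Z-c(dx_1)A$ (with an APS condition at $x_1=0$ on $Z_l$), and observes that $M_{cyl}$ and $Z_l$ are obtained from $M$ and $Z$ by cutting and pasting along $\{x_1=-\ve/2\}$. The relative index theorem then gives
\[
[\D_M(A)-\chi c(dx_1)A]+[\D_Z(A)]=[\D_M^{cyl}(A)]+[\D_{Z_l}(A)]\ .
\]
Both cylinder operators are shown to be \emph{invertible} (an explicit Green's operator is written for $\D_{Z_l}(A)$), so their classes vanish, and the harmless zeroth-order perturbation $\chi c(dx_1)A$ does not change the class on the left. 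This is clean, uses almost no analysis beyond invertibility on the model cylinders, and is exactly what gets recycled in Theorem~\ref{prod}.

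Your homotopy strategy is in principle workable, but it carries real technical debt that the paper's argument avoids. First, ``pushing forward by extension by zero'' to $L^2(M_{cyl},\E)$ does not yield a Fredholm operator: the kernel picks up all of $L^2(M_{cyl}\setminus M_R,\E)$. You would instead need diffeomorphisms $M_R\cong M_{cyl}$ (or a stabilization trick) and then track how $\D_R$ transforms. Second, and more seriously, for finite $R$ the resolvents of $\D_R$ are compact (so the Baaj--Julg picture applies), whereas for $R=\infty$ they are \emph{not} compact---the paper explicitly flags this and switches to the Kasparov-module definition of \cite[Def.~2.4]{wa1}. Your continuity argument therefore has to pass through a change of model for the $KK$-class at $R=\infty$, and norm continuity of $F(\D_R)$ alone does not obviously bridge that gap. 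None of this is fatal, but it is precisely the kind of analysis the relative-index route sidesteps; if your goal is a proof that transports verbatim to Theorem~\ref{prod}, the cut-and-paste argument is both shorter and more robust.
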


\begin{proof}
We consider the case $i=1$. The even case is analogous with the obvious changes.

Recall that $p:Z \to \ra M$ is the projection. Endow $\E_Z=p^*\E_{\ra M}$ with the product Dirac bundle structure. Let $\dira_Z$ be the associated Dirac operator and denote by $\D_Z(A)$ the closure of $$\dira_Z- c(dx_1)A:\C_c(Z,\E_Z) \to L^2(Z,\E_Z) \ .$$ 

Furthermore let $Z_l=(-\infty,0] \times \ra M \subset Z$ and denote by $\D_{Z_l}(A)$ the closure of
$$\dira_Z-c(dx_1)A:\{f \in \C_c(Z_l,\E_Z)~|~ 1_{\ge 0}(\D_{\ra M}+A)(f|_{x_1=0})=0\} \to L^2(Z_l,\E_Z) \ .$$

The manifolds $Z_l$ and $M_{cyl}$ are obtained from $Z$ and $M$ by cutting and pasting along the hypersurfaces $x_1=-\ve/2$. By the relative index theorem (which is proven in \cite{bu} for manifolds without boundary and unperturbed Dirac operators, however the proof works here as well), 
$$[\D_M(A)- \chi c(dx_1) A]+[\D_Z(A)]=[\D_M^{cyl}(A)]+[\D_{Z_l}(A)] \ .$$

The operator $\D_{Z_l}(A)$ is invertible: Set $P=1_{\ge 0}(\D_{\ra M}+A)$ and $\sigma:=c(dx_1)$. Let $f \in \C_c(Z_l,\E_Z)$. We consider $f(x_1):=f|_{\{x_1\} \times \ra M}$ as an element in $\C(\ra M,\E_{\ra M})$. Then
\begin{align*}
(\D_{Z_l}(A)^{-1}f)(x_1) =&- \int_0^{x_1}e^{-(x_1-y_1)\D_{\ra M}(A)}(1-P) \sigma f(y_1)~dy_1 \\
&+  \int_{x_1}^{-\infty}e^{-(x_1-y_1)\D_{\ra M}(A)}P \sigma f(y_1)~dy_1 \ .
\end{align*}

The operator $\D_Z(A)$ is invertible as well. Hence $[\D_Z(A)]=[\D_{Z_l}(A)]=0$. The assertion follows since $[\D_M(A)- \chi c(dx_1)A]=[\D_M(A)]$. 
\end{proof}

Next we discuss Cartesian products:

Let $N$ be an oriented closed Riemannian manifold. Let $\E_N$ be a Dirac $\B$-bundle on $N$ and let $\dira_N:\C(N,\E_N) \to L^2(N,\E_N)$ be the associated Dirac operator. Its closure $\D_N$ induces an index class $[\D_N]\in KK_j(\bbbc,\B)$, where $j$ is the parity of the dimension of $N$.  

In the following we assume that $M$ and $N$ are even-dimensional. The other cases will be discussed below.

Let $\gradu_N$ be the grading operator on $\E_N$.

The bundle $\E_M \boxtimes \E_N$ is an $\bbbz/2$-graded hermitian $\A \ten \B$-bundle on $M \times N$ with grading operator $\gradu_{M\times N}=\gradu_M \gradu_N=\gradu_M\ten \gradu_N$ and with connection.

The product Dirac operator acting on $\C(M \times N,\E_M \boxtimes \E_N)$ is defined by $\dira_{M \times N}= \dira_M + \dira_N$. In order to illustrate our convention on the notation for tensor products we note that this equals $\dira_M \ten 1 + \gradu_M \ten \dira_N$.

We sketch how one sees that $\dira_{M \times N}$ is indeed a Dirac operator: For $f\in \C(M\times N)$ set $c_{M \times N}(df):=[\dira_{M\times N},f]$. Then for $v \in TM \subset T(M\times N)$ one has $c_{M\times N}(v)=c_M(v)$, and similarly for $v \in TN$. Using this one checks easily that $c_{M\times N}$ is a Clifford multiplication, endowed with which $\E_M \boxtimes \E_N$ becomes a Dirac $\A \ten \B$-bundle, and that $\dira_{M \times N}$ is the associated Dirac operator. 

In particular $c_{M \times N}(dx_1)=c_M(dx_1)$. 

Using the isomorphism $ic(dx_1):\E_M^+|_{\ra M} \cong \E_M^-|_{\ra M}$ we get an isomorphism
$$\Psi: \E_{\ra M} \boxtimes \E_N \stackrel{\cong}{\lr} ((\E_M^+ \boxtimes \E_N^+) \oplus (\E_M^- \boxtimes \E_N^-))|_{\ra M}=\E_{\ra(M \times N)}   \ .$$
It holds that
\begin{align}
\label{boundprod}
\dira_{\ra(M \times N)}&=\Psi(\gradu_N\dira_{\ra M} + \dira_N)\Psi^{-1}  \ . 
\end{align}

The operator $\hat A:=\Psi (\gradu_N A)\Psi^{-1}=\Psi(A \ten \gradu_N)\Psi^{-1}$ is a trivializing operator for $\dira_{\ra(M \times N)}$. Hence we get as above a Fredholm operator $\D_{M \times N}(\hat A)$, whose index is an element of $KK_0(\bbbc,\A \ten \B)$.

Our main result in this section expresses this index in terms of the indices of $\D_M(A)$ and $\D_N$ via the Kasparov product $$KK_*(\bbbc,\A) \times KK_*(\bbbc,\B) \to KK_*(\bbbc,\A \ten \B),~(a,b) \mapsto a \ten b \ .$$

We briefly recall its definition:
Let $D_1$ resp. $D_2$ be an odd selfadjoint operator with compact resolvents on a countably generated $\bbbz/2$-graded Hilbert $\A$ resp. $\B$-module $H_1$ resp. $H_2$.
Recall \cite[\S 18.9]{bl} that in the Baaj-Julg picture of $KK$-theory the Kasparov product $[D_1]\ten [D_2]$ is represented by the closure of the operator $D_1 +D_2$ whose domain (before taking closure) is the algebraic tensor product $\dom D_1 \ten \dom D_2$. Actually, this formula was the motivation for our definition of the product Dirac operator.

\begin{theorem}
\label{prod}
It holds that
$$[\D_M(A)] \ten [\D_N]=[\D_{M \times N}(\hat A)] \ .$$
\end{theorem}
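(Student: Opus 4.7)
The plan is to reduce to the cylindric setting via Proposition \ref{eqAPScyl} and then match two representatives of the product class on the single manifold $M_{cyl}\times N = (M\times N)_{cyl}$, using the relative index theorem exactly in the spirit of the proof of Proposition \ref{eqAPScyl}. Since both Dirac operators $\D_M(A)$ and $\D_N$ have compact resolvents, Baaj--Julg gives $[\D_M(A)]\otimes[\D_N]=[\D_M(A)+\D_N]$, but the operator $\D_M(A)+\D_N$ carries the boundary condition $1_{\ge 0}(\D_{\ra M}+A)\otimes 1 = 0$ at $\ra M\times N$, whereas $\D_{M\times N}(\hat A)$ carries $1_{\ge 0}(\gradu_N(\D_{\ra M}+A)+\D_N)=0$ (via $\Psi$). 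These are genuinely different boundary conditions, so the point is to show they define the same $KK$-class.

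First I would apply Proposition \ref{eqAPScyl} on both sides, obtaining $[\D_M(A)]=[\D^{cyl}_M(A)]$ and $[\D_{M\times N}(\hat A)]=[\D^{cyl}_{M\times N}(\hat A)]$. This eliminates the APS boundary conditions at the cost of working with operators whose resolvents are no longer compact; the relevant $KK$-theoretic formalism is then \cite[Def. 2.4]{wa1}. Next I would argue, in that framework, that an unbounded representative of the product $[\D^{cyl}_M(A)]\otimes[\D_N]$ is furnished by the closure of $\D^{cyl}_M(A)+\D_N$ on $L^2(M_{cyl}\times N,\E_M\boxtimes\E_N)$. This step, modeled on the Baaj--Julg formula and on the suspension-type arguments of \cite[\S 3]{lp6}, is essentially a verification that the connection/positivity conditions characterising the Kasparov product are satisfied, using that $\D_N$ has compact resolvents on $N$ and that $\D^{cyl}_M(A)$ has the required Fredholm structure along the cylindric end.

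Having both index classes realised on $M_{cyl}\times N$, the two operators $\D^{cyl}_M(A)+\D_N$ and $\D^{cyl}_{M\times N}(\hat A)$ agree as symbols and differ only by a bounded zero-order term supported in the cylindric end $(-\ve,\infty)\times\ra M\times N$, coming from the discrepancy between $\gradu_M\dira_{\ra M}+\dira_N$ and $\gradu_{M\times N}\dira_{\ra(M\times N)} = \gradu_M\gradu_N\Psi(\gradu_N\dira_{\ra M}+\dira_N)\Psi^{-1}$ after multiplication by $c(dx_1)\chi$ and the appropriate trivializing terms. To compare their classes I would cut and paste along $\{x_1=-\ve/2\}\times\ra M\times N$, exchanging pieces with $Z\times N$ exactly as in the proof of Proposition \ref{eqAPScyl}. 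The relative index theorem then expresses $[\D^{cyl}_M(A)+\D_N]-[\D^{cyl}_{M\times N}(\hat A)]$ as a difference of two index classes on $Z\times N$, each coming from one of the two perturbations of $\dira_Z+\dira_N$ by a multiple of $c(dx_1)$.

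The main technical work, and the place I expect the main obstacle, is to show that each of these two model operators on $Z\times N=\bbbr\times\ra M\times N$ is invertible, so that both classes vanish and the desired equality follows. For the perturbation coming from $\D_M^{cyl}(A)+\D_N$ one can write down an explicit Green operator of the form
\begin{align*}
(\D_Z(A)+\D_N)^{-1}f(x_1)=\int_{\bbbr}e^{-(x_1-y_1)(\D_{\ra M}+A)}\bigl(1_{\ge 0}-1_{x_1\le y_1}\bigr)c(dx_1)f(y_1)\,dy_1
\end{align*}
that commutes with the $\D_N$-action (using that $\D_{\ra M}+A$ and $\D_N$ act on independent factors); this mimics the invertibility of $\D_{Z_l}(A)$ in Proposition \ref{eqAPScyl}. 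For the perturbation coming from $\D^{cyl}_{M\times N}(\hat A)$ one must argue, using the product boundary operator \eqref{boundprod}, that the corresponding operator on $Z\times N$ is again invertible; here the Pythagoras-type identity relating $(\gradu_N\dira_{\ra M}+\dira_N+\gradu_N A)^2$ to $(\dira_{\ra M}+A)^2+\dira_N^2$ should supply the needed spectral gap to exhibit an analogous Green's formula. Combining both invertibility statements with the relative index theorem yields the product formula.
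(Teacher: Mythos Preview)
Your overall instinct—reduce to the cylinder and invoke the relative index theorem as in Proposition~\ref{eqAPScyl}—is right, but you invert the order of two steps and thereby create work the paper avoids.

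The paper takes the Baaj--Julg product \emph{before} passing to the cylinder. Since $\D_M(A)$ and $\D_N$ both have compact resolvents, the product $[\D_M(A)]\otimes[\D_N]$ is represented by the closure $\Dk_{M\times N}(\hat A)$ of $\dira_{M\times N}$ with domain $\dom\D_M(A)\otimes\dom\D_N$ (algebraic tensor product). This is an operator on $M\times N$ with an APS-type ``product'' boundary condition, and the paper then reruns the argument of Proposition~\ref{eqAPScyl} on $M\times N$ to show $[\Dk_{M\times N}(\hat A)]=[\D^{cyl}_{M\times N}(\hat A)]$: cut and paste $\Dk_{M\times N}(\hat A)$ and $\D_{Z\times N}(\hat A)$ along $\{x_1=-\ve/2\}$; the half-cylinder model $\Dk_{Z_l\times N}(\hat A)$ (with domain $\dom\D_{Z_l}(A)\otimes\dom\D_N$) is shown invertible via the heat-kernel integral
\[
\Dk_{Z_l\times N}(\hat A)^{-1}=\int_0^\infty \Dk_{Z_l\times N}(\hat A)\,e^{-t\D_{Z_l}(A)^2}e^{-t\D_N^2}\,dt\ ,
\]
which converges because $\D_{Z_l}(A)$ is invertible. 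A final appeal to Proposition~\ref{eqAPScyl} gives $[\D^{cyl}_{M\times N}(\hat A)]=[\D_{M\times N}(\hat A)]$.

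Your route first replaces $[\D_M(A)]$ by $[\D_M^{cyl}(A)]$ and only then computes the product. Now $\D_M^{cyl}(A)$ has non-compact resolvent, so Baaj--Julg does not apply and you must verify a connection/positivity criterion by hand to conclude $[\D_M^{cyl}(A)]\otimes[\D_N]=[\D_M^{cyl}(A)+\D_N]$. This is the step you flag as ``the main obstacle'', and it is exactly what the paper's ordering sidesteps. Moreover, once you do justify it, the two cylindric operators you propose to compare are actually \emph{equal}: under the collar identifications one checks $c(dx_1)\chi\gradu_M(A\otimes 1)=c(dx_1)\chi\gradu_{M\times N}\hat A$, so the ``discrepancy'' you describe vanishes and your second relative-index step is superfluous. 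In short, your plan is salvageable but circuitous; taking the product first keeps you in the compact-resolvent regime and reduces the whole argument to a single replay of Proposition~\ref{eqAPScyl}.
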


\begin{proof}
By the comparing the above description of the Kasparov product with the definition of the product Dirac operator one sees that the class on the left hand side is represented by the closure $\Dk_{M \times N}(\hat A)$ of the odd operator 
$\dira_{M \times N}$ with domain $\dom \D_M(A) \ten \dom \D_N$ (understood as an algebraic tensor product). We use the method of the proof of Prop. \ref{eqAPScyl} in order show that $$[\Dk_{M \times N}(\hat A)]=[\D^{cyl}_{M \times N}(\hat A)] \ .$$ Then the assertion follows from Prop. \ref{eqAPScyl}.

Let $\Dk_{Z_l \times N}(\hat A)$ be the closure of the operator 
$\dira_{Z \times N} -  c(dx_1)\gradu_{Z\times N} \hat A$ with domain $\dom \D_{Z_l}(A) \ten \dom \D_N$, where $\dom \D_{Z_l}(A)$ is defined as in the proof of Prop. \ref{eqAPScyl}. 

The operator $\Dk_{Z_l \times N}(\hat A)$ is invertible with inverse 
$$\Dk_{Z_l \times N}(\hat A)^{-1}=\int_0^{\infty} \Dk_{Z_l \times N}(\hat A)e^{-t \D_{Z_l}(A)^2}e^{-\D_N^2} ~dt \ .$$
The integral converges for $t \to \infty$ since $\D_{Z_l}(A)$ is invertible, see the proof of Prop. \ref{eqAPScyl}.

Define $\D_{Z\times N}(\hat A)$ as the closure of 
$$\dira_{Z \times N} -  c(dx_1) \gradu_{Z\times N}\hat A: \C_c(Z \times N, \E_Z \boxtimes \E_N) \to L^2(Z \times N, \E_Z \boxtimes \E_N) \ .$$
By the relative index theorem
$$[\Dk_{M\times N}(\hat A)- \chi c(dx_1)\gradu_{M\times N} \hat A]+[\D_{Z\times N}(\hat A)]=[\D_{M\times N}^{cyl}(\hat A)]+[\Dk_{Z_l\times N}(\hat A)] \ .$$
Since $\D_{Z\times N}(\hat A)$ is also invertible, the assertion follows.
\end{proof}

\section{Products of unbounded Kasparov modules -- the remaining cases}
\label{produnbound}

Before discussing the cases in which $M$ and $N$ are not both even-dimensional we derive the general form of the Kasparov product for the remaining parities from its description in the even case given above. (It is needed here that the description remains valid if we deal with graded $C^*$-algebras.) The expressions we get for the product are the motivation for the definitions of the product Dirac operators in the following section.

Let $C_1$ be the Clifford algebra with one odd generator $\sigma$ fulfilling $\sigma^2=1$.  

The product involving odd $KK$-theory is defined via 
the isomorphism $KK_1(\bbbc, \A)\cong KK_0(\bbbc,\A \ten C_1)$. It maps a class $[D]$ represented by selfadjoint Fredholm operator $D$ on an ungraded countably generated Hilbert $\A$-module $H$ to the class $[\sigma D] \in KK_0(\bbbc,\A \ten C_1)$, where $\sigma D$ is defined on the $\bbbz/2$-graded Hilbert $\A \ten C_1$-module $H\ten C_1$. On the other hand given an odd selfadjoint Fredholm operator $D'$ on a $\bbbz/2$-graded Hilbert $\A \ten C_1$-module $H'$ and an odd involution $T$ on $H'$ with $TD'=D'T$, then the restriction of $TD'$ to the positive eigenspace of $T$ represents the preimage of $[D']$ under the above isomorphism. Note that right multiplication by the projection $\frac{1}{2}(1-\sigma)$ is trivial on the positive eigenspace of $T$, thus it is endowed with a canonical Hilbert $\A$-module structure. If $D'=\sigma D$ and $H'=H\ten C_1$ as before, we may choose $T=\sigma$ to get exactly the Kasparov module back we started with.

Let $D_1$ resp. $D_2$ be a selfadjoint operator with compact resolvents on a countably generated Hilbert $\A$- resp. $\B$-module $H_1$ resp. $H_2$.

\subsection{Even times odd}
First assume that $H_1$ is $\bbbz/2$-graded, $H_2$ is trivially graded, and $D_1$ is odd. We write $\gradu_1$ for the grading operator on $H_1$. The Kasparov product of  $[D_1]\in KK_0(\bbbc,\A)$ with $[\sigma D_2] \in KK_0(\bbbc,\B \ten C_1)$ is
$[D_1 + \sigma D_2] \in KK_0(\bbbc, \A\ten \B \ten C_1)$. 
We set $T=\sigma \gradu_1$. We have that $D_1 + \sigma D_2=\sigma \gradu_1(\sigma \gradu_1 D_1 +  \gradu_1 D_2)$ and that the positive eigenspace of $T$ equals $H_1 \ten H_2 \ten \bbbc (1 + \sigma)$. The choice of the base vector $\frac{1}{ 2}(1+\sigma)$ of $\bbbc(1+\sigma)$ defines an obvious isomorphism to $H_1 \ten H_2$. Here we consider $H_1 \ten H_2$ ungraded. The isomorphism intertwines $\sigma \gradu_1 D_1 + \gradu_1 D_2$ with  $D_1 +  \gradu_1 D_2$.
Thus
$$[D_1] \ten [D_2]= [D_1 + \gradu_1 D_2]\in KK_1(\bbbc,\A \ten \B) \ .$$

\subsection{Odd times even}
Now we assume that $H_2$ is $\bbbz/2$-graded, $H_1$ is trivially graded, and $D_2$ is odd. We write $\gradu_2$ for the grading operator on $H_2$. The Kasparov product of   $[\sigma D_1] \in KK_0(\bbbc,\A \ten C_1)$ with $[D_2]\in KK_0(\bbbc,\B)$ is
$[\sigma D_1 +  D_2] \in KK_0(\bbbc, \A\ten \B \ten C_1)$. 
Then $\sigma D_1 +  D_2=\sigma \gradu_2 (\gradu_2 D_1 + \sigma \gradu_2 D_2)$, and the positive eigenspace of $\sigma\gradu_2$ is $\bigl(H_1 \ten \bbbc (1+\sigma) \ten H_2^+\bigr)  \oplus  \bigl(H_1 \ten \bbbc (1-\sigma) \ten H_2^-\bigr)  \cong H_1 \ten H_2$. The last isomorphism intertwines $\gradu_2 D_1 + \sigma \gradu_2 D_2$ with $\gradu_2 D_1 +   D_2$.
Thus
$$[D_1] \ten [D_2]= [\gradu_2 D_1 + D_2]\in KK_1(\bbbc,\A \ten \B) \ .$$

\subsection{Odd times odd}

Now let $H_1, H_2$ be trivially graded. We write $C_1', C_1''$ for two copies of $C_1$ with generators $\sigma', \sigma''$ respectively. 

The class $$[\sigma' D_1]\ten [\sigma'' D_2] \in KK_0(\bbbc, \A \ten \B \ten C_1' \ten C_1'')$$ 
is represented by the odd operator $\sigma'D_1 + \sigma''D_2$ on $H_1 \ten H_2 \ten C_1' \ten C_1''$. 

Note that $\frac 12 (1 + i\sigma'\sigma'')$ is a rank one projection. By Morita equivalence the homomorphism $$p:\bbbc \to C_1' \ten C_1'', ~x \mapsto \frac 12 x(1 + i\sigma'\sigma'')$$ induces an isomorphism $p_*:KK_0(\bbbc, \A \ten \B ) \to KK_0(\bbbc, \A \ten \B\ten C_1' \ten C_1'')$. 

We define a representative of the preimage of $[\sigma'D_1 + \sigma''D_2]$ under $p_*$. The algebra $C_1' \ten C_1''$ acts on $\bbbc^2$ via the isomorphism $$C_1' \ten C_1'' \to M_2(\bbbc) \ ,$$
$$\sigma' \mapsto \Gamma_1:=\left(\begin{array}{cc} 1 &  0\\ 0 & -1\end{array}\right),~ \sigma''\mapsto \Gamma_2:=\left(\begin{array}{cc} 0 & i\\ -i & 0 \end{array}\right) \ .$$
The action is compatible with the grading if on  $\bbbc^2$ the grading defined by the operator $$-i\Gamma_1\Gamma_2=\left(\begin{array}{cc} 0 & 1\\ 1 & 0 \end{array}\right) \ .$$ 

In the following we show that the odd operator $\Gamma_1 D_1 + \Gamma_2 D_2$ on $H_1 \ten H_2 \ten \bbbc^2$ represents the preimage.

Define the Hilbert $C_1' \ten C_1''$-module $V:=\frac 12 (1 + i\sigma'\sigma'')(C_1' \ten C_1'')$.

The unit vector $e_1:=\frac 12 (1 + i\sigma'\sigma'')$ spans $V^+$, and the unit vector $e_2:=\frac{1}{2}(\sigma' -i\sigma'')$ spans $V^-$.

Note that canonically $\bbbc^2 \ten_p (C_1' \ten C_2'') \cong \bbbc^2 \ten V$.

Choose a unit vector $v_1\in (\bbbc^2)^+$ and let $v_2:=\Gamma_1v_1 \in (\bbbc^2)^-$. The even isomorphism of Hilbert $C_1' \ten C_2''$-modules
$$\bbbc^2 \ten V \to C_1' \ten C_2'' \ ,$$ 
$$v_1 \ten e_1 \mapsto e_1, \quad v_1 \ten e_2 \mapsto e_2 \ ,$$
$$v_2 \ten e_1 \mapsto \sigma'e_1, \quad v_2 \ten e_2 \mapsto \sigma'e_2 \ ,$$ is compatible with the left $C_1' \ten C_2''$-action on both spaces. Summarizing, we get an isomorphism $H_1 \ten H_2 \ten \bbbc^2\ten_p (C_1' \ten C_2'')\cong H_1 \ten H_2 \ten C_1' \ten C_1''$ intertwining $\Gamma_1 D_1 + \Gamma_2D_2$ and $\sigma'D_1 + \sigma''D_2$.

Thus
$$[D_1] \ten [D_2]=[\Gamma_1 D_1 + \Gamma_2D_2] \in KK_0(\bbbc, \A \ten \B)\ .$$

(This calculation corrects a similar but flawed argument in the proof of \cite[Lemma 9.2]{wa1})

\section{Product structures for Dirac operators -- the remaining cases}

\subsection{$M$ is even-dimensional and $N$ odd-dimensional}
\label{evodd}

Let $\gradu_M$ be the grading operator on $\E_M$. The bundle $\E_M \boxtimes \E_N$ is now considered an ungraded $\A \ten \B$-vector bundle. The product Dirac operator is defined as $$\dira_{M \times N}= \dira_M + \gradu_M\dira_N \ .$$
Hence here also $c_{M\times N}(dx_1)=c_M(dx_1)$.

The isomorphism $ic(dx_1):\E_M^+|_{\ra M} \cong \E_M^-|_{\ra M}$ induces an isomorphism
$$\Psi: (\E_{\ra M}\oplus \E_{\ra M}) \boxtimes \E_N \stackrel{\cong}{\lr} (\E_M \boxtimes \E_N)|_{\ra M}=\E_{\ra(M \times N)}   \ .$$
We let the matrices $\Gamma_1$, $\Gamma_2$, which were defined in \S \ref{produnbound}, act on $(\E_{\ra M}\oplus \E_{\ra M})\boxtimes \E_N$. 

Then 
\begin{align}
\label{boundprodevodd}
\dira_{\ra(M \times N)}&=\Psi(\Gamma_1\dira_{\ra M} + \Gamma_2\dira_N)\Psi^{-1}  \ . 
\end{align}

Theorem \ref{prod} holds in this situation for $\hat A:=\Psi\Gamma_1 (A \ten 1)\Psi^{-1}$.

\subsection{$M$ is odd-dimensional and $N$ even-dimensional}
\label{oddev}

In analogy to the previous case the bundle $\E_M \boxtimes \E_N$ is considered ungraded and the product Dirac operator is defined as $$\dira_{M \times N}:=\gradu_N\dira_M + \dira_N \ .$$
It follows that $c_{M\times N}(dx_1)=\gradu_Nc_M(dx_1)$.

We have that
$$\E_{\ra(M \times N)} = (\E_M \boxtimes \E_N)|_{\ra M} = \E_{\ra M} \boxtimes \E_N  \ ,$$
which is a graded vector bundle with grading operator $\gradu_{\ra M \times N}=ic_M(dx_1)\gradu_N=\gradu_{\ra M} \gradu_N$.

Then 
\begin{align}
\label{boundprododdev}
\dira_{\ra(M \times N)}&=\dira_{\ra M} -i \gradu_N\dira_N  \ . 
\end{align}

Theorem \ref{prod} holds with $\hat A:=A \ten 1$.

\subsection{$M,~N$ are odd-dimensional.}
\label{oddodd}

Consider the bundle $\E_{M\times N}:=(\E_M\oplus \E_M) \boxtimes \E_N$, on which $\Gamma_1, \Gamma_2$ from \S \ref{produnbound} act. The associated product Dirac operator is defined by $$\dira_{M \times N}= \Gamma_1\dira_M + \Gamma_2\dira_N$$ and the grading is given by $\gradu_{M \times N}=-i\Gamma_1\Gamma_2$. We see that $c_{M\times N}(dx_1)=\Gamma_1c_M(dx_1)$. We have an isomorphism
\begin{align*}
\Psi:\E_{\ra M}\boxtimes \E_N &\to \E_{\ra(M\times N)}=\E_{M\times N}^+|_{\ra M}\\
x \ten y &\mapsto \frac{1}{\sqrt 2}(x,x)\ten y \ .
\end{align*}

Then
\begin{align}
\label{boundprododdodd}
\dira_{\ra(M \times N)}&=\Psi(\dira_{\ra M} + \gradu_{\ra M}\dira_N)\Psi^{-1}  \ . 
\end{align}

Theorem \ref{prod} holds with $\hat A:=\Psi(A \ten 1)\Psi^{-1}$.

\section{Product formula for twisted signature classes}
\label{prodsig}

Let $\F_M$ be a flat hermitian $\A$-vector bundle on $M$ endowed with a compatible flat connection and let $\F_{\ra M}=\F_M|_{\ra M}$. We assume that $\F_M|_{U_{\ve}}=p^*\F_{\ra M}$ as a hermitian vector bundle and that the connection is of product type on $U_{\ve}$. Analogously let $\F_N$ be a flat $\B$-vector bundle on $N$, also endowed with a hermitian structure and a compatible flat connection. 

We denote by $\Omega^*(M,\F_M)$ the space of smooth twisted de Rham forms with de Rham differential $d_M$. Let $\Omega_{(2)}^*(M,\F_M)$ the Hilbert $\A$-module of $L^2$-forms.

We endow $\Lambda^*T^*M$ with the Levi-Civit\`a connection. Thus we have an induced connection on $\Lambda^*T^*M \ten \F$.
The bundle $\Lambda^*T^*M\ten \F_M$ is a Clifford module with Clifford multiplication $c_M(\alpha)\omega =\alpha\wedge \omega-\iota(\alpha)\omega$. Recall that the induced chirality operator $\tau_M$ is a selfadjoint involution on $\Lambda^*T^*M\ten \F$, see \cite[Lemma 3.17]{bgv}. We denote by $\Lambda^{\pm} T^*M \ten \F_M$ resp. $\Omega^{\pm}(M,\F_M)$ the eigenspace associated to the eigenvalue $\pm 1$ of $\tau_M$. If $M$ is even-dimensional we endow $\Lambda^*T^*M \ten \F$ with the $\bbbz/2$-grading induced by $\tau_M$. With these structures $\Lambda^*T^*M\ten \F_M$ is a Dirac bundle. The signature operator is defined as the associated Dirac operator, see \cite[\S 3.6]{bgv}.

We fix the isometry $$\Phi_M:\Lambda^* T^*\ra M \ten \F_{\ra M} \to (\Lambda^+ T^*M|_{\ra M}) \ten \F_{\ra M},~\alpha \mapsto \frac{1}{\sqrt 2}\bigl(dx_1 \wedge \alpha + \tau_M(dx_1 \wedge \alpha)\bigr) \ .$$

\subsection{The even case}
\label{even}

In the following we assume that $M$ is even-dimensional. 

For $\alpha \in \Lambda^* T^*\ra M$ 
$$\tau_M(dx_1 \wedge \alpha)= \tau_{\ra M} \alpha$$
and $$\tau_M(\alpha)=dx_1 \wedge \tau_{\ra M}(\alpha) \ .$$

The signature operator on $\Omega^*(M,\F_M)$ equals
$$d^{sign}_M:=d_M+d_M^*=d_M - \tau_M d_M \tau_M\ .$$

Note that the normalization here is as in \cite[\S 3.6]{bgv} and differs from \cite{hs}\cite{llp}. The corresponding index classes agree up to sign, see \S \ref{normalization}. Accordingly, also our convention in the odd case is different.

It holds that 
\begin{align}
\label{boundsign}
B(d_M^{sign})&=\Phi_M(d_{\ra M}\tau_{\ra M} + \tau_{\ra M}d_{\ra M})\Phi_M^{-1} \ .
\end{align}

We denote the closure of $d_{\ra M}\tau_{\ra M} + \tau_{\ra M}d_{\ra M}:\Omega^*(\ra M,\F_{\ra M}) \to \Omega^*_{(2)}(\ra M,\F_{\ra M})$ by $\D^{bd}_{\ra M}$. In order to avoid confusion, we point out that $\D^{bd}_{\ra M}$ agrees with the odd signature operator in the convention of some authors, but not in convention used here. For the precise relation see \S \ref{oddcase}.

The following definition generalizes the boundary conditions considered in \cite[\S 6.3]{lp6}.

\begin{ddd}
\label{definvol}
Assume that there is an orthogonal decomposition $\Omega^*_{(2)}(\ra M,\F_{\ra M})=V\oplus W$ with respect to which $\tau_{\ra M}$ and $\D^{bd}_{\ra M}$ are diagonal. Furthermore assume that $\D^{bd}_{\ra M}|_V$ is invertible. Let $\inv$ be an operator on $\Omega^*_{(2)}(\ra M,\F_{\ra M})$ that vanishes on $V$, is an involution on $W$ and anticommutes with $\tau_{\ra M}$ and $\D^{bd}_{\ra M}$. 

We call a trivializing operator $A$ of $B(d_M^{sign})$ {\rm symmetric} with respect to $\inv$ if it is diagonal with respect to the decomposition $\Phi_M(V)\oplus \Phi_M(W)$, vanishes on $\Phi_M(V)$ and anticommutes with $\Phi_M\inv\Phi_M^{-1}$.

If $A$ is a symmetric trivializing operator, then the index class $$\sigma_{\inv}(M,\F_M):=[\D_M^{sign}(A)] \in K_0(\A)$$ is called the {\rm (twisted) signature class}.

We call  the symmetric trivializing operator $A_{\inv}:=i\Phi_M \inv\tau_{\ra M}\Phi_M^{-1}$ the {\rm canonical} symmetric trivializing operator of $B(d_M^{sign})$ with respect to $\inv$.  
\end{ddd}

Since $(\D^{bd}_{\ra M}+i\inv \tau_{\ra M})^2=(\D^{bd}_{\ra M})^2+\inv^2$ is invertible, the operator $\D^{bd}_{\ra M}+i\inv\tau_{\ra M}$ is invertible as well. Hence $A_{\inv}$ is indeed a trivializing operator for $B(d_M^{sign})$.  

In the following we extend any operator on $W$ tacitly to $\Omega^*_{(2)}(\ra M,\F_{\ra M})$ by letting it vanish on $V$.

The following result sharpens and generalizes similar calculations in \cite{lp4}. 

\begin{lem}
\label{independ}
The twisted signature class $\sigma_{\inv}(M,\F_M)$ does not depend on the choice of the symmetric trivializing operator.  
\end{lem}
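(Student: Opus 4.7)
The plan is to connect any two symmetric trivializing operators by a norm-continuous path of symmetric trivializing operators, and then to apply homotopy invariance in the unbounded Kasparov picture.

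First I analyze the structure on $W$ by decomposing $W=W_+\oplus W_-$ into the $\pm 1$ eigenspaces of $\inv|_W$. Since $\tau_{\ra M}|_W$ and $\D^{bd}_{\ra M}|_W$ both anticommute with $\inv|_W$, they are block off-diagonal; in particular,
\[
\D^{bd}_{\ra M}|_W=\begin{pmatrix} 0 & D^* \\ D & 0 \end{pmatrix}
\]
for a closed densely-defined operator $D\colon W_+\to W_-$. Any bounded self-adjoint operator on $W$ anticommuting with $\inv|_W$ takes the form $\bigl(\begin{smallmatrix} 0 & a^* \\ a & 0 \end{smallmatrix}\bigr)$ for an adjointable $a\in\B(W_+,W_-)$, and the trivializing condition translates to $D+a$ being invertible with bounded inverse.

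Given two symmetric trivializing operators with associated $a_0,a_1$, I would construct a norm-continuous path $s\mapsto a_s$ in the open set $\U:=\{a\in\B(W_+,W_-)\colon D+a\text{ invertible}\}$. Path-connectedness of $\U$ follows, after translation by $-D$, from the connectedness of the invertible adjointable maps between countably generated Hilbert $\A$-modules (Mingo's extension of Kuiper's theorem). Lifting yields a norm-continuous family $s\mapsto A_s$ of symmetric trivializing operators with the prescribed endpoints. The associated family $\D_M^{sign}(A_s)$ is a continuous family of regular self-adjoint Fredholm operators with compact resolvents in the bounded-transform sense, so by homotopy invariance of the Baaj-Julg picture \cite[\S 17.11]{bl}, the class $[\D_M^{sign}(A_s)]\in K_0(\A)$ is locally constant in $s$, hence constant.

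The main obstacle is the path-connectedness step: the relevant Mingo-Kuiper statement must apply in the specific Hilbert $\A$-module context, and one also needs continuity of the Riesz transform along the lifted family of APS realizations. If a direct application fails, I would fall back on the technique of the proof of Proposition \ref{eqAPScyl}: represent $[\D_M^{sign}(A_1)]-[\D_M^{sign}(A_0)]$ as the index of a cylindric operator on $\ra M\times\bbbr$ whose potential interpolates from $A_0$ to $A_1$, and use the spectral symmetry $\inv\D\inv=-\D$ (which holds along the whole interpolation because every $A_s$ anticommutes with $\inv$) to deduce invertibility of this cylindric operator, so that its index—equal to the desired difference—vanishes.
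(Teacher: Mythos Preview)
Your primary approach has a real gap. First, the ``translation by $-D$'' step is ill-posed: $D$ is the restriction of the unbounded differential operator $\D^{bd}_{\ra M}$, so $\U$ is not a translate of the set of bounded invertibles. Second, and more seriously, Mingo's Kuiper-type theorem concerns the standard module $\ell^2(\A)$; it does not assert that the invertibles between arbitrary countably generated Hilbert $\A$-modules are connected. Definition~\ref{definvol} permits $W$ to be, for instance, a finitely generated projective module on which $\D^{bd}_{\ra M}$ vanishes; then the symmetric trivializing operators are parametrized by the invertible elements of $B(W_+,W_-)$, and for $\A$ with $K_1(\A)\neq 0$ this set can have several path components. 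So in the stated generality there need not exist any continuous path of symmetric trivializing operators from $A_0$ to $A_1$, and the homotopy argument cannot close.

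Your fallback is the right strategy and matches the paper's route via Prop.~\ref{eqAPScyl} and the relative index theorem on the cylinder $Z=\bbbr\times\ra M$, but the final inference is wrong: the symmetry $\inv D\inv=-D$ only forces the spectrum to be symmetric about $0$; it does not give invertibility. Indeed $\chi_0A_0+\chi_1A_1$ is in general not a trivializing operator in the transition region, so the cylindric operator on $\tilde W$ is merely Fredholm. The correct use of the $\inv$-symmetry is $KK$-theoretic: the odd involution $ic(dx_1)\tau_Z\,\Phi_Z\inv\Phi_Z^{-1}$ anticommutes with the cylindric operator on $\tilde W$, yielding an even unital homomorphism $C_1\to B(\tilde W)$; hence the class lies in the image of $j^*\colon KK_0(C_1,\A)\to KK_0(\bbbc,\A)$, which is zero because $[j]=0$ in $KK_0(\bbbc,C_1)$. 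That is how the paper concludes.
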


\begin{proof}

Let $A_0, A_1$ be two trivializing operators for $B(d_M^{sign})$ that are symmetric with respect to $\inv$. 

Consider the cylinder $Z:=\bbbr \times \ra M$ and let $\D_Z^{sign}$ be the signature operator on $\Omega_{(2)}^*(Z,p^*\F_{\ra M})$. Recall that the positive and negative eigenspace of $\tau_Z$ are identified via $ic(dx_1)$. We get translation invariant spaces $$\tilde V=L^2(\bbbr)\ten \Phi_Z(V)\ten\bbbc^2$$
$$\tilde W:=L^2(\bbbr)\ten \Phi_Z(W)\ten\bbbc^2$$ such that $\Omega^*_{(2)}(Z,p^*\F_{\ra M})=\tilde V\oplus \tilde W$. The operators $A_i$ and $\Phi_Z\inv\Phi_Z^{-1}$ define translation invariant operators on $\Omega^*_{(2)}(Z,p^*\F_{\ra M})$. Note that $\D_Z^{sign}$ is invertible on $\tilde V$ since $\D^{bd}_{\ra M}|_{\ra M}$ is invertible on $V$.

Let $\chi_0,\chi_1:Z \to [0,1]$ be smooth functions such that  $\chi_0(x_1,x_2)=1$ if $x_1 \le 0$ and $\chi_0(x_1,x_2)=0$ if $x_1 \ge \frac 12$ and that $\chi_1(x_1,x_2)=1$ if $x_1 \ge 1$ and $\chi_1(x_1,x_2)=0$ if $x_1 \le \frac 12$.

Prop. \ref{eqAPScyl} and the relative index theorem \cite{bu} imply that
$$[\D_M^{sign}(A_0)] + [(\D_Z^{sign} - c(dx_1)\tau_Z(\chi_0 A_0  + \chi_1 A_1))|_{\tilde W}]=[\D_M^{sign}(A_1)] \ .$$

Let $j:\bbbc \to C_1$ be the unique unital homomorphism.
It holds that $[j] \in KK_0(\bbbc,C_1)=0$, thus $$\Ima(j^*:KK_0(C_1,\A) \to KK_0(\bbbc,\A))=0 \ .$$

There is an even unital homomorphism $C_1 \to B(\tilde W)$ mapping $\sigma$ to $ic(dx_1)\tau_Z(\Phi_Z \inv \Phi_Z^{-1})$. 
Since $(\D_Z^{sign} - c(dx_1)\tau_Z(\chi_0 A_0  + \chi_1 A_1))|_{\tilde W}$ anticommutes with $ic(dx_1)\tau_Z(\Phi_Z \inv \Phi_Z^{-1})$, we have that
$$[(\D_Z^{sign} - c(dx_1)\tau_Z(\chi_0 A_0  + \chi_1 A_1))|_{\tilde W}]\in  \Ima(j^*)\ .$$
\end{proof}

Note that for $\inv^{opp}:=-i\inv\tau_{\ra M}$ the signature class $\sigma_{\inv^{opp}}(M,\F_M)$ is well-defined and that $A_{\inv^{opp}}=\Phi_M \inv\Phi_M^{-1}$. Since $\inv^{opp} \inv=i\tau_{\ra M}$, the second assertion of the following Lemma implies that 
\begin{align}
\label{eqinvinvopp}
\sigma_{\inv}(M,\F_M)&=\sigma_{\inv^{opp}}(M,\F_M) \ .
\end{align}

\begin{lem} For $j=0,1$ let $\Omega_{(2)}^*(\ra M,\F_{\ra M})=V_j\oplus W_j$ be an orthogonal decomposition and let $\inv_j$ be an involution on $W_j$ such that $\sigma_{\inv_j}(M,\F_M)$ is well-defined.
\begin{enumerate}
\item Assume that $W_1 \subset W_0$ and $\inv_1=\inv_0|_{W_1}$.
\item Assume that $W:=W_0=W_1$. Let $E^+$ be the positive and $E^-$ the negative eigenspace of $\tau_{\ra M}$ on $W$. We identify $E^-$ with $E^+$ using the isomorphism $\inv_0:E^- \to E^+$. Then there is a unitary $u$ on $E^+$ such that with respect to the decomposition $W=E^+ \oplus E^-$ 
$$\inv_1=\left(\begin{array}{cc} 0 & u^* \\ u & 0 \end{array}\right) \ .$$
We assume that the spectrum of $u$ is not equal to $S^1$.
\end{enumerate}

If one of the previous two conditions holds, then
$$\sigma_{\inv_0}(M,\F_M)=\sigma_{\inv_1}(M,\F_M) \ .$$
\end{lem}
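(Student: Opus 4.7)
Case~(1) should reduce directly to Lemma~\ref{independ} once one checks that the canonical symmetric trivializing operator for $\inv_1$ is also symmetric with respect to $\inv_0$. Case~(2) admits no such direct comparison; my plan is to use the spectral hypothesis on $u$ to construct a continuous path $\inv_t$ of admissible involutions joining $\inv_0$ to $\inv_1$, and then to invoke homotopy invariance of the $KK$-theoretic index along this path.

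\textbf{Case (1).} Self-adjointness of $\inv_0$ together with $\inv_1=\inv_0|_{W_1}$ implies that $\inv_0$ preserves $W_1$ and hence its orthogonal complement $W_0\ominus W_1$ in $W_0$. Since $V_0\oplus W_0=V_1\oplus W_1$ and $V_0\subset V_1$, the summand $W_0\ominus W_1$ lies in $V_1$. Set $A:=A_{\inv_1}$, extended by zero. Then $A$ vanishes on $\Phi_M(V_0)\cup\Phi_M(W_0\ominus W_1)\subset \Phi_M(V_1)$, and on $\Phi_M(W_1)$ it anticommutes with $\Phi_M\inv_0\Phi_M^{-1}=\Phi_M\inv_1\Phi_M^{-1}$. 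Hence $A$ is a symmetric trivializing operator with respect to $(V_0,W_0,\inv_0)$, and Lemma~\ref{independ} yields
\[\sigma_{\inv_0}(M,\F_M)=[\D_M^{sign}(A)]=\sigma_{\inv_1}(M,\F_M).\]

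\textbf{Case (2).} In the identification $W=E^+\oplus E^-$ via $\inv_0$, the operator $\D^{bd}_{\ra M}|_W$ assumes the block form $\begin{pmatrix}0&-D'\\ D'&0\end{pmatrix}$ with $D'$ anti-self-adjoint, and the anticommutation of $\inv_1$ with $\D^{bd}_{\ra M}$ becomes $uD'u=D'$, i.e. $u^nD'=D'u^{-n}$ for all $n\in\bbbz$. The spectral hypothesis yields $\lambda\in S^1\setminus\sigma(u)$; a branch of $\log$ at $\lambda$ gives a bounded selfadjoint $H$ with $e^{iH}=u$. The intertwining forces $D'$ to exchange the spectral subspaces of $u$ at $\mu$ and $\bar\mu$, and choosing $H$ compatibly piece by piece (see main obstacle below) produces the anticommutation $HD'+D'H=0$. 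The path $u_t:=e^{itH}$ then satisfies $u_tD'u_t=D'$ for every $t\in[0,1]$, so the involutions
\[\inv_t:=\begin{pmatrix}0&u_t^*\\ u_t&0\end{pmatrix}\]
interpolate continuously between $\inv_0$ and $\inv_1$ through admissible involutions. The canonical trivializing operators $A_{\inv_t}$ depend norm-continuously on $t$, so the cylindric operators $\D_M^{sign,cyl}(A_{\inv_t})$ form an operator homotopy of regular selfadjoint Fredholm operators with compact resolvents. Homotopy invariance of the Kasparov class, combined with Proposition~\ref{eqAPScyl}, then delivers $\sigma_{\inv_0}(M,\F_M)=\sigma_{\inv_1}(M,\F_M)$.

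\textbf{Main obstacle.} The delicate step in~(2) is the construction of $H$ when the omitted spectral point $\lambda$ is not $\pm 1$: the principal-branch identity $\log(u^*)=-\log(u)$ only works when the cut can be placed at $-1$. I plan to decompose $W$ orthogonally along the $D'$-pairs $E_\mu(u)\oplus E_{\bar\mu}(u)$, handling $\mu\neq\pm 1$ by $H=\operatorname{diag}(\theta,-\theta)$ with $\mu=e^{i\theta}$; the $\mu=1$ piece is trivial ($H=0$), while the $\mu=-1$ piece demands a further spectral decomposition of $D'$ so that $H$ can be chosen as an off-diagonal self-adjoint operator of eigenvalues $\pm\pi$. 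A cleaner alternative would be to adapt the cylinder/$C_1$-trick of Lemma~\ref{independ} by perturbing the cylindric Dirac on $Z$ with a boundary term interpolating $A_{\inv_0}$ and $A_{\inv_1}$ in the $x_1$-direction, and checking that $ic(dx_1)\tau_Z\Phi_Z\inv_t\Phi_Z^{-1}$ still provides a $C_1$-action up to compact errors.
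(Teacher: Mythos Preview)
Your Case~(1) is fine and matches the paper's argument: any trivializing operator symmetric for $\inv_1$ is automatically symmetric for $\inv_0$.

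In Case~(2) you have a genuine computational error that makes the argument far more complicated than it needs to be. You claim that in the decomposition $W=E^+\oplus E^-$ the operator $\D^{bd}_{\ra M}|_W$ is off-diagonal. But $\D^{bd}_{\ra M}=d_{\ra M}\tau_{\ra M}+\tau_{\ra M}d_{\ra M}$ \emph{commutes} with $\tau_{\ra M}$ (just multiply out), so it is \emph{diagonal}:
\[
\D^{bd}_{\ra M}|_W=\begin{pmatrix}D&0\\0&-D\end{pmatrix},\qquad D:=\D^{bd}_{\ra M}|_{E^+},
\]
the sign in the lower block coming from anticommutation with $\inv_0=\left(\begin{smallmatrix}0&1\\1&0\end{smallmatrix}\right)$. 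Anticommutation with $\inv_1=\left(\begin{smallmatrix}0&u^*\\u&0\end{smallmatrix}\right)$ then gives $Du=uD$ and $Du^*=u^*D$, i.e.\ $D$ \emph{commutes} with $u$, not the twisted relation $uD'u=D'$ you wrote. With commutation in hand the construction of the logarithm is immediate: pick any $\lambda\in S^1\setminus\sigma(u)$, take the continuous branch of $\log$ on $S^1\setminus\{\lambda\}$, set $a=-i\log u$; then $a$ is a continuous function of $u$ and hence commutes with $D$, so $u_t=e^{ita}$ also commutes with $D$ and $\inv_t$ anticommutes with $\D^{bd}_{\ra M}$ for every $t$. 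Your ``main obstacle'' (pairing $E_\mu$ with $E_{\bar\mu}$, the special treatment of $\mu=-1$, etc.) is a phantom created by the wrong block form.

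You may have been thinking of the odd-dimensional analogue treated later in the paper, where the boundary operator is $\D^{sign}_{\ra M}$, which \emph{anticommutes} with $\tau_{\ra M}$; there one does get $Du=u^*D$ and the argument requires the stronger spectral hypothesis that $\sigma(u)\cup\sigma(u^*)\neq S^1$. In the present even case no such complication arises.
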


\begin{proof}
In the first case we get the equality since any trivializing operator that it symmetric with respect to $\inv_1$ is also symmetric with respect to $\inv_0$. 

Now assume (2). Since the spectrum of $u$ is not equal to $S^1$, there is a selfadjoint operator $a$ on $E^+$ such that $u=e^{ia}$. Set $u_t=e^{ita},~t \in [0,1]$. The involutions $\inv_0,\inv_1$ are homotopic to each other via the path of involutions
$$\inv_t=\left(\begin{array}{cc} 0 & u_t^* \\ u_t & 0 \end{array}\right) \ .$$
Since $\D^{bd}_{\ra M}$ anticommutes with $\inv_0$ and commutes with $\tau_{\ra M}$, we get that $$\D^{bd}_{\ra M}=\left(\begin{array}{cc} D & 0 \\ 0 & -D \end{array}\right)$$ with $D=(\D^{bd}_{\ra M})|_{E^+}$. Furthermore $\D^{bd}_{\ra M}$ also anticommutes with $\inv_1$. This implies that $D$ commutes with $u$ and $u^*$. Hence it commutes also with $u_t$ and $u_t^*$. It follows that $\D^{bd}_{\ra M}$ anticommutes with $\inv_t$.
Thus $\sigma_{\inv_t}(M,\F_M)$ is well-defined. By the homotopy invariance of $KK$-theory classes it does not depend on $t$.
\end{proof}

The following proposition generalizes both cases of the previous Lemma:

\begin{prop}
\label{propev}
For $j=0,1$ let $\Omega^*_{(2)}(\ra M,\F_{\ra M})=V_j\oplus W_j$ be an orthogonal decomposition and let $\inv_j$ be an involution on $W_j$ such that $\sigma_{\inv_j}(M,\F_M)$ is well-defined.
Assume that $V_0=(V_0 \cap V_1) \oplus (V_0 \cap W_1)$ and
$W_0=(W_0\cap V_1) \oplus (W_0 \cap W_1)$ and that $\inv_0,~ \inv_1$ restrict to involutions on $W_0 \cap W_1$. Let $\inv_0|_{W_0\cap W_1}$ and $\inv_1|_{W_0\cap W_1}$ fulfill condition (2) of the previous Lemma. Then
$$\sigma_{\inv_0}(M,\F_M)=\sigma_{\inv_1}(M,\F_M) \ .$$
\end{prop}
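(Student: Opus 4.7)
The strategy is to introduce an intermediate symmetric boundary condition defined on the common refinement $W_0 \cap W_1$, so that each of the two parts of the preceding Lemma applies to one half of the desired equality. This reduces the Proposition to a combination of the two cases already handled.

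Specifically, I plan to set $W' := W_0 \cap W_1$ and $V' := V_0 \oplus (W_0 \cap V_1)$, which is an orthogonal decomposition of $\Omega^*_{(2)}(\ra M,\F_{\ra M})$ by the hypothesized splittings of $V_0$ and $W_0$. Taking orthogonal complements in these splittings also yields $V_1 = (V_0 \cap V_1) \oplus (W_0 \cap V_1) \subset V'$ and $W_1 = (V_0 \cap W_1) \oplus (W_0 \cap W_1) \supset W'$, facts I will use below.

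First I would check that the classes $\sigma_{\inv_0|_{W'}}(M,\F_M)$ and $\sigma_{\inv_1|_{W'}}(M,\F_M)$ are well-defined in the sense of Definition \ref{definvol}. Since $\tau_{\ra M}$ and $\D^{bd}_{\ra M}$ are diagonal with respect to both given decompositions, they preserve each of the four pairwise intersections of $\{V_0,W_0\}$ with $\{V_1,W_1\}$ and are therefore diagonal with respect to $V' \oplus W'$. Invertibility of $\D^{bd}_{\ra M}$ on $V'$ follows from invertibility on $V_0$ and on $V_1$ together with $V_0 \subset V'$ and $W_0 \cap V_1 \subset V_1$. By hypothesis $\inv_0$ and $\inv_1$ restrict to involutions on $W'$, and as restrictions they still anticommute with $\tau_{\ra M}$ and $\D^{bd}_{\ra M}$.

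Next I would apply part (1) of the preceding Lemma twice. Taking $(W_0,\inv_0)$ there as the larger datum and $W' \subset W_0$ with $\inv_0|_{W'}$ as the smaller one yields $\sigma_{\inv_0}(M,\F_M) = \sigma_{\inv_0|_{W'}}(M,\F_M)$. Taking $(W_1,\inv_1)$ as the larger datum and $W' \subset W_1$ with $\inv_1|_{W'}$ as the smaller one yields $\sigma_{\inv_1}(M,\F_M) = \sigma_{\inv_1|_{W'}}(M,\F_M)$. Finally, the involutions $\inv_0|_{W'}$ and $\inv_1|_{W'}$ live on the common space $W'$ and satisfy condition (2) of the preceding Lemma by assumption, so part (2) of that Lemma gives $\sigma_{\inv_0|_{W'}}(M,\F_M) = \sigma_{\inv_1|_{W'}}(M,\F_M)$. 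Concatenating the three equalities proves the proposition. The main step is the reduction to the common $W'$: once the refined decomposition $V' \oplus W'$ is shown to satisfy all the hypotheses of Definition \ref{definvol}, no new analytic input beyond the two parts of the preceding Lemma is required.
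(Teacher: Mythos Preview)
Your proposal is correct and follows essentially the same approach as the paper's own proof: set $\tilde\inv_j:=\inv_j|_{W_0\cap W_1}$, observe that $\sigma_{\tilde\inv_j}(M,\F_M)$ is well-defined, apply part (1) of the preceding Lemma twice to pass from $\inv_j$ to $\tilde\inv_j$, and then part (2) to identify the two restricted classes. The paper compresses this into three sentences, whereas you spell out the refined decomposition $V'\oplus W'$ and the verification of the hypotheses of Definition~\ref{definvol}; but the argument is the same.
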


\begin{proof}
Set $\tilde \inv_j=\inv_j|_{W_0\cap W_1}$. Note that $\sigma_{\tilde \inv_j}(M,\F_M)$ is well-defined. By part (1) of the previous Lemma $\sigma_{\inv_j}(M,\F_M)=\sigma_{\tilde \inv_j}(M,\F_M)$ and part (2) implies that $\sigma_{\tilde \inv_0}(M,\F_M)=\sigma_{\tilde \inv_1}(M,\F_M)$.
\end{proof}

Now, following \cite{lp6}, we introduce the particular involution that is used for the definition of the signature class. For brevity it will be denoted $\alpha_M$ though it depends only on the structures on $\ra M$. 

Let $m=\dim M/2$.

Let $V_M$ be the closure of $d^*\Omega^m(\ra M,\F_{\ra M})\oplus d\Omega^{m-1}(\ra M,\F_{\ra M})$
and $W_M=V_M^{\perp}$. Define $\Omega^{<}_{\F_{M}}$ as the closed subspace of $W_M$ spanned by forms of degree smaller than or equal to $m-1$ and correspondingly define $\Omega^{>}_{\F_{M}}$ as the subspace spanned by forms of degree bigger than or equal to $m$.

We make the following assumption:

\begin{ass} 
\label{ass}
The closure of $d:\Omega^{m-1}(\ra M,\F_{\ra M}) \to \Omega_{(2)}^{m}(\ra M,\F_{\ra M})$ has closed range.
\end{ass}

Note that the operators $\tau_{\ra M},d_{\ra M},d_{\ra M}^*$ restrict to operators on $V_M$ resp. $W_M$ and that $\tau_{\ra M}:\Omega^{<}_{\F_M} \to \Omega^{>}_{\F_M}$ is an isomorphism. 

Assumption \ref{ass} implies that $\D^{bd}_{\ra M}$ is invertible on $V_M$ and that
$$V_M \oplus W_M=\Omega^*_{(2)}(\ra M,\F_{\ra M}) \ .$$ 

Let $\alpha_M$ be the involution on $W_M$ with positive eigenspace $\Omega^{<}_{\F_{M}}$ and negative eigenspace $\Omega^{>}_{\F_{M}}$. Then $\D^{bd}_{\ra M}$ and $\alpha_M$ 
anticommute.

We write $\sigma(M,\F_M):=\sigma_{\alpha_M}(M,\F_M)$.

Note that Assumption \ref{ass} does not depend on the choice of the Riemannian metric since $\Omega_{(2)}^{m}(\ra M,\F_{\ra M})$ as a topological vector space does not depend on the Riemannian metric. Using the homotopy invariance of $KK$-theory classes one also shows that $\sigma(M,\F_M)$ does not depend on the choice of the Riemannian metric.

The following technical lemma will be needed when we apply Prop. \ref{propev}.  

\begin{lem}
\label{dirsum}
Assume that $N$ is even-dimensional. Let the de Rham operators on $\Omega^*(\ra M,\F_{\ra M})$ and on $\Omega^*(\ra M \times N,\F_{\ra M}\boxtimes \F_N)$ fulfill Assumption \ref{ass}. We have that
\begin{align*}
V_{M\times N}&=(V_M \ten \Omega_{(2)}^*(N,\F_N))\cap V_{M\times N} \oplus  (W_M \ten \Omega_{(2)}^*(N,\F_N))\cap V_{M\times N}\\
W_{M\times N}&=(V_M \ten \Omega_{(2)}^*(N,\F_N))\cap W_{M\times N} \oplus  (W_M \ten \Omega_{(2)}^*(N,\F_N))\cap W_{M\times N} \ .
\end{align*}

The operator $\D^{bd}_{\ra M\times N}$ is diagonal with respect to the decompositions on the right hand side and is invertible on $V_M \ten \Omega_{(2)}^*(N,\F_N)$.
\end{lem}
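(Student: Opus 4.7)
The plan is to reduce both claims to one observation: the orthogonal projection $P_M$ onto $V_M$ commutes with $d_{\ra M}$, $d_{\ra M}^*$, and $\tau_{\ra M}$ on $\Omega_{(2)}^*(\ra M,\F_{\ra M})$. Since $P_M$ preserves the $\ra M$-degree grading, it then commutes with every sign factor in the Koszul-type expressions $d_{\ra(M\times N)}=d_{\ra M}\otimes 1\pm 1\otimes d_N$ and $\tau_{\ra(M\times N)}=\pm\,\tau_{\ra M}\otimes\tau_N$ (the signs being diagonal in $\ra M$-degree). Hence $P_M\otimes 1$ commutes with $d_{\ra(M\times N)}$, with its adjoint, with $\tau_{\ra(M\times N)}$, and therefore with $\D^{bd}_{\ra(M\times N)}=d_{\ra(M\times N)}\tau_{\ra(M\times N)}+\tau_{\ra(M\times N)}d_{\ra(M\times N)}$. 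Being a self-adjoint projection commuting with $\D^{bd}_{\ra(M\times N)}$, $P_M\otimes 1$ preserves both $V_{M\times N}=\overline{d_{\ra(M\times N)}^*\Omega^{m'}\oplus d_{\ra(M\times N)}\Omega^{m'-1}}$ (with $m'=m+\tfrac12\dim N$) and $W_{M\times N}=V_{M\times N}^{\perp}$, splitting each as the claimed direct sum; the diagonality of $\D^{bd}_{\ra(M\times N)}$ with respect to these splittings follows at once.

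The commutation $[P_M,d_{\ra M}]=0$ is a short case analysis. By Assumption \ref{ass}, one has the Hodge-type decompositions $\Omega_{(2)}^{m-1}(\ra M,\F_{\ra M})=\ker d|_{\Omega^{m-1}}\oplus\overline{d^*\Omega^m}$ and $\Omega_{(2)}^m(\ra M,\F_{\ra M})=\ker d^*|_{\Omega^m}\oplus\overline{d\Omega^{m-1}}$, where in each case the second summand is precisely the corresponding degree component of $V_M$. Together with $d\Omega^{m-2}\subseteq\ker d|_{\Omega^{m-1}}\subseteq W_M$, $d\Omega^{m-1}\subseteq V_M$, and $d^2=0$, this gives $P_M d=dP_M$ on every degree by direct inspection. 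The identity $[P_M,d_{\ra M}^*]=0$ follows by taking adjoints, and $[P_M,\tau_{\ra M}]=0$ from $\tau d^*=\pm d\tau$, which makes $\tau_{\ra M}$ exchange $\overline{d^*\Omega^m}$ and $\overline{d\Omega^{m-1}}$ and hence preserve $V_M$.

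For the invertibility of $\D^{bd}_{\ra(M\times N)}$ on $V_M\otimes\Omega_{(2)}^*(N,\F_N)$, the plan is to pass to squares. Using $d_{\ra(M\times N)}^2=0$ and the identity $\tau_{\ra(M\times N)}d_{\ra(M\times N)}\tau_{\ra(M\times N)}=d_{\ra(M\times N)}^*$ one obtains $(\D^{bd}_{\ra(M\times N)})^2=D_{\ra(M\times N)}^2$, and the standard graded anticommutation of the two summands in $D_{\ra(M\times N)}=D_{\ra M}\otimes 1\pm\varepsilon\otimes D_N$ (with $\varepsilon$ the $\ra M$-degree parity) gives $D_{\ra(M\times N)}^2=D_{\ra M}^2\otimes 1+1\otimes D_N^2=(\D^{bd}_{\ra M})^2\otimes 1+1\otimes(\D^{bd}_N)^2$. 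By Assumption \ref{ass}, $(\D^{bd}_{\ra M})^2|_{V_M}\ge\lambda>0$ for some $\lambda$, whence $(\D^{bd}_{\ra(M\times N)})^2\ge\lambda$ on $V_M\otimes\Omega_{(2)}^*(N,\F_N)$, giving invertibility. The main obstacle I anticipate is pinning down the Koszul and Hodge-star sign conventions on $\Lambda^*T^*\ra M\otimes\Lambda^*T^*N$ so that $\tau_{\ra(M\times N)}d_{\ra(M\times N)}\tau_{\ra(M\times N)}=d_{\ra(M\times N)}^*$ and $(\D^{bd})^2=D^2$ hold; but all such sign factors are diagonal in $\ra M$-degree and commute trivially with $P_M\otimes 1$, so the commutation arguments are robust to the exact choice of convention.
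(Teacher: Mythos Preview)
Your argument is correct and is essentially the paper's proof recast in the language of the commuting projection $P_M\otimes 1$: the paper verifies directly that $d_{\ra(M\times N)}$, $d_{\ra(M\times N)}^*$ and $\tau_{\ra(M\times N)}$ preserve $V_M\otimes\Omega_{(2)}^*(N,\F_N)$ and $W_M\otimes\Omega_{(2)}^*(N,\F_N)$, then checks the two displayed decompositions by looking at elementary tensors in the critical degrees $m'-1$ and $m'$, while you package the same invariance as $[P_M,d_{\ra M}]=[P_M,d_{\ra M}^*]=[P_M,\tau_{\ra M}]=0$ and deduce everything from commutation. The invertibility argument via $(\D^{bd})^2=\Delta_{\ra M}+\Delta_N$ is identical.

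One small phrasing issue: the sentence ``Being a self-adjoint projection commuting with $\D^{bd}_{\ra(M\times N)}$, $P_M\otimes 1$ preserves both $V_{M\times N}$\ldots'' suggests that commutation with $\D^{bd}_{\ra(M\times N)}$ alone forces preservation of $V_{M\times N}$. It does not: $V_{M\times N}$ is not a spectral subspace of $\D^{bd}_{\ra(M\times N)}$. What you actually need (and have already established) is that $P_M\otimes 1$ commutes with $d_{\ra(M\times N)}$ and $d_{\ra(M\times N)}^*$ and preserves total degree; this is what gives $(P_M\otimes 1)\bigl(\overline{d^*\Omega^{m'}\oplus d\Omega^{m'-1}}\bigr)\subseteq V_{M\times N}$, and self-adjointness then yields the $W_{M\times N}$ statement. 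The commutation with $\D^{bd}_{\ra(M\times N)}$ is what gives the diagonality assertion. Rewording that one sentence fixes the logic.
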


\begin{proof}
Note first that $d_{\ra M\times N}, d_{\ra M\times N}^*$ and $\tau_{\ra M\times N}$ map the spaces $V_M \ten \Omega_{(2)}^*(N,\F_N)$ and $W_M \ten \Omega_{(2)}^*(N,\F_N)$ to themselves.

For each $k$
\begin{align*}
\lefteqn{\Omega^k(\ra M \times N,\F_{\ra M} \boxtimes \F_N)}\\
&=\Omega^k(\ra M \times N,\F_{\ra M} \boxtimes \F_N)\cap (V_M \ten \Omega_{(2)}^*(N,\F_N))  \\
&\quad \oplus \Omega^k(\ra M \times N,\F_{\ra M} \boxtimes \F_N)\cap (W_M \ten \Omega_{(2)}^*(N,\F_N)) \ .
\end{align*}
Hence we only need to consider the degrees $k:=(\dim M+\dim N)/2$ and $k-1$.

We begin by proving the first equation:
Let $\gamma=d(\alpha \wedge \beta)\in d\Omega^{k-1}(\ra M\times N,\F_M \boxtimes \F_N)\subset V_{M\times N}$ with $\alpha \in \Omega^*(\ra M,\F_{\ra M}),~\beta \in \Omega^*(N,\F_N)$. If $\alpha \in W_M$, then $d\alpha \in W_M$, thus $\gamma \in (W_M \ten \Omega_{(2)}^*(N,\F_N))\cap V_{M\times N}$. If $\alpha \in V_M$, then $d\alpha \in V_M$, hence $\gamma \in (V_M \ten \Omega_{(2)}^*(N,\F_N))\cap V_{M\times N}$. An analogous consideration works for $d^*\Omega^k(\ra M\times N,\F_M \boxtimes \F_N)$. This shows the first equation.

For the proof of the second equation let $\gamma \in \Omega^{k-1}(\ra M\times N,\F_M \boxtimes \F_N) \cap W_{M\times N}$. Hence $d\gamma=0$. Write $\gamma=\gamma_1 +\gamma_2$ with $\gamma_1 \in W_M \ten \Omega_{(2)}^*(N,\F_N)$, $\gamma_2 \in V_M \ten \Omega_{(2)}^*(N,\F_N)$. Then $d\gamma_1 \in W_M \ten \Omega_{(2)}^*(N,\F_N)$, $d\gamma_2 \in V_M \ten \Omega_{(2)}^*(N,\F_N)$. Since these spaces are orthogonal to each other, the equation $d(\gamma_1+\gamma_2)=0$ implies that $d\gamma_1=d\gamma_2=0$. Thus $\gamma_1,\gamma_2 \in W_{M\times N}$. The case $\gamma \in \Omega^{k}(\ra M\times N,\F_M \boxtimes \F_N)$ with $d^*\gamma=0$ can be treated analogously. Now the second equation follows. 

The operator $\D_{\ra M \times N}^{bd}$ respects the decompositions on the right hand side since $d$ and $\tau_{\ra M\times N}$ do. Its square is the Laplace operator $\Delta_{\ra M \times N}=\Delta_{\ra M} + \Delta_N$. Since $\Delta_{\ra M}$ is invertible on $V_M$, the operator $\Delta_{\ra M\times N}$ is invertible on $V_M \ten \Omega_{(2)}^*(N,\F_N)$. Hence also $\D^{bd}_{\ra M\times N}$ is invertible on $V_M \ten \Omega_{(2)}^*(N,\F_N)$. 
\end{proof}

\begin{theorem}
\label{prodev}
Let $M, N$ be even-dimensional.

If Assumption \ref{ass} holds for the de Rham operators on $\Omega^*(\ra M,\F_{\ra M})$ and on $\Omega^*(\ra M \times N,\F_{\ra M}  \boxtimes \F_N)$, then 
$$\sigma(M,\F_M) \ten \sigma(N,\F_N)=\sigma(M \times N,\F_M \boxtimes \F_N) \in K_0(\A \ten \B) \ .$$
\end{theorem}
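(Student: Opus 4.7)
The plan is to apply the product formula for Dirac classes (Theorem \ref{prod}) to the twisted signature operators $d^{sign}_M$ and $d^{sign}_N$, and then use Prop.\ \ref{propev} to pass from the involution that naturally appears on $\ra(M\times N)$ to the canonical involution $\alpha_{M\times N}$. Since $\ra N=\emptyset$ one has $\sigma(N,\F_N)=[\D_N^{sign}]$; applying Theorem \ref{prod} with $A=A_{\alpha_M}$ yields
$$\sigma(M,\F_M)\ten \sigma(N,\F_N)=[\D^{sign}_{M\times N}(\hat A_{\alpha_M})]\in K_0(\A\ten \B),$$
where $\hat A_{\alpha_M}=\Psi(\gradu_N A_{\alpha_M})\Psi^{-1}$ is the induced trivializing operator on $\ra(M\times N)=\ra M\times N$. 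Here one uses that the product Dirac structure on $\Lambda^*T^*(M\times N)\ten(\F_M\boxtimes \F_N)$ coincides with the signature Dirac structure of $M\times N$.

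Next I would recognize $\hat A_{\alpha_M}$ as the canonical symmetric trivializing operator with respect to a natural involution $\tilde\inv$ on $\Omega^*_{(2)}(\ra M\times N,\F_{\ra M}\boxtimes \F_N)$. Unraveling the formula $A_{\inv}=i\Phi\inv\tau\Phi^{-1}$ of Definition \ref{definvol}, together with $\gradu_N=\tau_N$ and $\tau_{\ra(M\times N)}=\tau_{\ra M}\tau_N$, forces $\tilde\inv=\alpha_M\ten 1$ on $W_M\ten \Omega^*_{(2)}(N,\F_N)$ (extended by $0$ on $V_M\ten \Omega^*_{(2)}(N,\F_N)$). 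The required anticommutation of $\tilde\inv$ with $\tau_{\ra(M\times N)}$ and with $\D^{bd}_{\ra(M\times N)}$ follows from the anticommutation of $\alpha_M$ with $\tau_{\ra M}$ and $\D^{bd}_{\ra M}$ together with the product structure of the de Rham data on $\ra M\times N$; Lemma \ref{dirsum} ensures that $\D^{bd}_{\ra(M\times N)}$ preserves $W_M\ten \Omega^*_{(2)}(N,\F_N)$ and is invertible on the complementary piece $V_M\ten \Omega^*_{(2)}(N,\F_N)$. Hence $\sigma_{\tilde\inv}(M\times N,\F_M\boxtimes \F_N)$ is well-defined and equals $[\D^{sign}_{M\times N}(\hat A_{\alpha_M})]$.

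Finally I would apply Prop.\ \ref{propev} with $V_0:=V_M\ten \Omega^*_{(2)}(N,\F_N)$, $W_0:=W_M\ten \Omega^*_{(2)}(N,\F_N)$, $\inv_0:=\tilde\inv$ and $V_1:=V_{M\times N}$, $W_1:=W_{M\times N}$, $\inv_1:=\alpha_{M\times N}$. The compatibility of these two decompositions in the sense of Prop.\ \ref{propev} is precisely the content of Lemma \ref{dirsum}. On $W_0\cap W_1$ both involutions anticommute with $\tau_{\ra(M\times N)}$ and $\D^{bd}_{\ra(M\times N)}$, so they are of the block form of condition (2) of the lemma preceding Prop.\ \ref{propev}, with comparison unitary $u=\inv_1\inv_0$ acting on the $\tau$-positive eigenspace $E^+$. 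The main obstacle is verifying the spectral hypothesis $\sigma(u)\ne S^1$. Since $\tilde\inv$ acts as $\pm 1$ according as the $\ra M$-degree $p$ of a form satisfies $p\le m-1$ or $p\ge m$ (with $m=\dim M/2$), while $\alpha_{M\times N}$ acts as $\pm 1$ according as the total degree $p+q$ satisfies $p+q\le m+n-1$ or $p+q\ge m+n$ (with $n=\dim N/2$), the composition $u$ decomposes along the bi-degree $(p,q)$ on $\ra M\times N$ into a direct sum of $\pm 1$-blocks, so that $\sigma(u)\subset\{\pm 1\}\subsetneq S^1$. Using (\ref{eqinvinvopp}) to replace $\alpha_{M\times N}$ by $\alpha^{opp}_{M\times N}$ if required to match conventions, Prop.\ \ref{propev} then yields $\sigma_{\tilde\inv}(M\times N,\F_M\boxtimes\F_N)=\sigma(M\times N,\F_M\boxtimes\F_N)$, completing the proof.
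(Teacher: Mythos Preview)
The main gap is in your first step. The product Dirac bundle structure on $\E_M\boxtimes\E_N=(\Lambda^*T^*M\ten\F_M)\boxtimes(\Lambda^*T^*N\ten\F_N)$, constructed as in \S\ref{proddir}, does \emph{not} coincide with the signature Dirac bundle structure on $\Lambda^*T^*(M\times N)\ten(\F_M\boxtimes\F_N)$. The product Dirac operator of Theorem~\ref{prod} is $d_M^{sign}\ten 1+\tau_M\ten d_N^{sign}$ (the grading on the first factor being $\gradu_M=\tau_M$), whereas from $d_{M\times N}=d_M\ten 1+\Gamma_M\ten d_N$ one gets, as in eq.~\eqref{signprod}, $d_{M\times N}^{sign}=d_M^{sign}\ten 1+\Gamma_M\ten d_N^{sign}$. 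Equivalently, for $v\in T^*N$ the Clifford action in the product structure is $\tau_M\ten c_N(v)$, while in the signature structure it is $\Gamma_M\ten c_N(v)$. So Theorem~\ref{prod} gives the index of the \emph{product} operator with boundary condition $\hat A$, not of $\D^{sign}_{M\times N}(\hat A)$.

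The paper bridges this discrepancy with the even unitary $I=\Gamma_M\tau_M$ via a \emph{symmetrized product} $\sym(I,D)$, for which $[\sym(I,D)]=[D]$ by additivity of the index. In the closed case $I$ commutes with $\D_M^{sign}$, so $\sym(I,\D_M^{sign})=I\D_M^{sign}$ and the identification goes through easily. In the boundary case, however, $I$ does \emph{not} commute with $\D_M^{sign}(A_M)$ (the canonical trivializing operator $A_M$ breaks the symmetry), so one must carry the symmetrized operators through the cut-and-paste argument of Theorem~\ref{prod}. The paper makes this explicit, remarking that the relevant equality ``does not follow directly from Theorem~\ref{prod}, but its proof is analogous.'' Your sentence ``the product Dirac structure \dots\ coincides with the signature Dirac structure of $M\times N$'' is exactly the point that fails and requires this extra work.

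Your subsequent identification of the induced involution as $\alpha_M\ten 1$ on $W_M\ten\Omega^*_{(2)}(N,\F_N)$ and the application of Prop.~\ref{propev} via the observation that the two involutions commute (so the comparison unitary has spectrum in $\{\pm 1\}$) are correct and match the paper's Sublemma~\ref{sublevev} and the paragraph following it.
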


\begin{proof}
We denote by $\Gamma_M$ the grading operator with respect to the $\bbbz/2$-grading determined by the parity of the degree of a differential form on $M$. 

The de Rham operator on $M \times N$ fulfills
\begin{align*}
d_{M \times N}= d_M \ten 1 + \Gamma_M \ten d_N \ .
\end{align*}
Thus
\begin{align}
\label{signprod}
d_{M \times N}+d_{M\times N}^*&=(d_M+d_M^*)\ten 1 + \Gamma_M \ten (d_N+d_N^*) \ .
\end{align}

Note for later that these two equations also hold for $M$ or $N$ odd-dimensional.

We begin by proving the theorem for closed $M$. We conclude (recall our convention on graded tensor products) that
$$\D_{M \times N}^{sign}=\D_M^{sign}\ten 1 + \Gamma_M \ten \D_N^{sign}=\D_M^{sign} + \Gamma_M \tau_M \D_N^{sign} \ .$$
Furthermore
\begin{align*}
\tau_{M \times N}=\tau_M \tau_N \ .
\end{align*}

We fix the following notation: Let $D$ be an odd selfadjoint Fredholm operator on a $\bbbz/2$-graded countably generated Hilbert $\A$-module $H$ and let $I$ be a unitary on $H^-$. We define the symmetrized product $\sym(I,D)=\left(\begin{array}{cc} 0 & D^-I^* \\ ID^+ & 0\end{array}\right)$. Then $\sym(I,D)$ is a regular selfadjoint odd Fredholm operator and $[\sym(I,D)]=[D] \in KK_0(\bbbc,\A)$ by the additivity of the Fredholm index. If $I$ is an even unitary defined on $H$ that commutes with $D$, then $\sym(I|_{H^-},D)=ID$.

Applying this property twice with $I=\Gamma_M\tau_M$ yields that in $KK_0(\bbbc,\A \ten \B)$
\begin{align*}
[\D_M^{sign}] \ten [\D_N^{sign}]&=[\Gamma_M\tau_M\D_M^{sign}] \ten [\D_N^{sign}]\\
&=[\Gamma_M\tau_M\D_M^{sign} +  \D_N^{sign}]\\
&=[\Gamma_M\tau_M(\D_M^{sign} + \Gamma_M\tau_M \D_N^{sign})]\\
&=[\D_M^{sign} + \Gamma_M\tau_M \D_N^{sign}]\\
&=[\D_{M \times N}^{sign}] \ .
\end{align*}
The second equality follows from the description of the Kasparov product before Theorem \ref{prod}.

Now we consider the case where $M$ is a manifold with boundary.

Define the involution
$$\tilde \alpha_M:=(\Phi_{M\times N}^{-1}\circ\Psi\circ \Phi_M)\alpha_M\tau_N (\Phi_M^{-1}\circ \Psi^{-1}\circ\Phi_{M\times N})$$ 
on $$\tilde W_M:=(\Phi_{M\times N}^{-1}\circ\Psi\circ\Phi_M)(W_M \ten \Omega_{(2)}^*(N,\F_N))$$ 
and set 
$$\tilde V_M:=(\Phi_{M\times N}^{-1}\circ\Psi \circ \Phi_M)(V_M \ten \Omega_{(2)}^*(N,\F_N)) \ .$$ 

\begin{sublem}
\label{sublevev}
\begin{enumerate}
\item It holds that 
\begin{align*}
\tilde V_M&=V_M \ten \Omega_{(2)}^*(N,\F_N) \\
\tilde W_M&=W_M \ten \Omega_{(2)}^*(N,\F_N) 
\end{align*}
and that $\tilde \alpha_M=\alpha_M$.
\item The operator $\tilde \alpha_M$ anticommutes with $\D^{bd}_{\ra M\times N}$ and $\tau_{\ra M\times N}$ and commutes with $\alpha_{M\times N}$.
\end{enumerate}
\end{sublem}

\begin{proof}
For $\alpha \in \Lambda^*T^*\ra M\ten \F_M,~ \beta \in \Lambda^* T^*N \ten \F_N$
\begin{align*}
\lefteqn{(\Psi \circ \Phi_M)(\alpha \wedge \beta)}\\
&=\frac 1{\sqrt 2}\Psi(dx_1 \wedge \alpha \wedge \beta + \tau_M(dx_1 \wedge \alpha) \wedge \beta)\\
&=\frac{1}{2\sqrt 2}\bigl(dx_1 \wedge \alpha + \tau_M(dx_1 \wedge \alpha)) \wedge (\beta + \tau_N \beta) + i(-\alpha + \tau_M(\alpha))\wedge (\beta - \tau_N \beta)\bigr) \\
&=\frac{1}{2\sqrt 2}\bigl(dx_1 \wedge \alpha \wedge (\beta + \tau_N \beta) + idx_1\wedge \tau_{\ra M}(\alpha) \wedge (\beta-\tau_N\beta)\bigr) + \tau_{M \times N}(\dots) \ .
\end{align*}

Here the dots represent a repetition of the first summand, such that the last line is in the positive eigenspace of $\tau_{M\times N}$.

Thus
$$(\Phi_{M\times N}^{-1}\circ\Psi \circ \Phi_M)(\alpha \wedge \beta)=\frac{1}{2}\bigl(\alpha \wedge (\beta + \tau_N \beta) + i\tau_{\ra M}\alpha \wedge (\beta-\tau_N\beta)\bigr) \ .$$

In particular
$$(\Phi_{M\times N}^{-1}\circ\Psi \circ \Phi_M)(\alpha  \wedge (\beta +\tau_N \beta))=\alpha \wedge (\beta + \tau_N \beta)$$
and 
\begin{align*}
(\Phi_{M\times N}^{-1}\circ\Psi \circ \Phi_M)(\tau_{\ra M} \alpha  \wedge (\beta -\tau_N \beta))
&=i\alpha\wedge (\beta- \tau_N \beta) \ .
\end{align*}

Let $\tilde W_M^{\pm}$ be the positive resp. negative eigenspace of $\tilde \alpha_M$. It follows that
\begin{align*}
\tilde W_M^+&=\Omega^{<}_{\F_M} \ten \Omega_{(2)}^*(N,\F_N)\\ 
\tilde W_M^-&=\Omega^{>}_{\F_M} \ten \Omega_{(2)}^*(N,\F_N) \ .
\end{align*}
This shows the second  and third equality of assertion (1). The first equality follows
since $\tilde V_M$ is the orthogonal complement of $\tilde W_M$.

Furthermore $\tau_{\ra M\times N}$ interchanges the spaces $\Omega^{<}_{\F_M} \ten \Omega_{(2)}^*(N,\F_N)$ and $\Omega^{>}_{\F_M} \ten \Omega_{(2)}^*(N,\F_N)$ whereas $d_{\ra M \times N}$ preserves them. This implies assertion (2).
\end{proof}

We set $\inv=\tilde \alpha_M$. By the Sublemma $\sigma_{\inv}(M\times N,\F_M\boxtimes \F_N)$ is well-defined. One checks easily that $\tilde \alpha_M$ and $\alpha_{M\times N}$ restrict to involutions on $\tilde W_M \cap W_{M\times N}$. Since on that space $(\tilde \alpha_M\alpha_{M \times N})^2=1$, the spectrum of the restriction of $\tilde \alpha_M\alpha_{M \times N}$ to $\tilde W_M \cap W_{M\times N}$ is contained in $\{-1,1\}$. Hence, by Lemma \ref{dirsum} and Prop. \ref{propev},
$$\sigma_{\inv}(M\times N,\F_M\boxtimes \F_N)=\sigma(M\times N,\F_M\boxtimes \F_N) \ .$$

Let $A_M$ be the canonical symmetric trivializing operator for $B(d_M^{sign})$ with respect to $\alpha_M$. 

By definition $\hat A_M=i(\Psi \circ \Phi_M)(\alpha_M \tau_{\ra M} \tau_N)(\Psi \circ \Phi_M)^{-1}$. Hence
$$(\Phi_{M\times N})^{-1}\hat A_M \Phi_{M\times N}=i\tilde \alpha_M \tau_{\ra M}=i\alpha_M \tau_{\ra M} \ .$$ 

Thus $\hat A_M=A_{\inv}$ and  
$$\sigma_{\inv}(M\times N,\F_M\boxtimes \F_N)=[\D^{sign}_{M \times N}(\hat A_M)] \ .$$

Note that
\begin{align*}
\Phi_{M\times N}^{-1}\Gamma_M\Phi_{M\times N} &=-\Gamma_{\ra M}
\end{align*}
and 
$$\Phi_{M\times N}^{-1}\tau_M\Phi_{M\times N}=\Phi_{M\times N}^{-1}\tau_N\Phi_{M\times N}=\tau_N \ .$$

Therefore, in contrast to the closed case, $I=\Gamma_M\tau_M$ commutes neither with $\D_M^{sign}(A_M)$ nor with $(\D_M^{sign} + \Gamma_M\tau_M \D_N^{sign})^{cyl}(\hat A_M)$. This was the motivation for introducing the symmetrized product.

We have that
\begin{align*}
\sigma(M,\F_M) \ten \sigma(N,\F_N)&=[\D_M^{sign}(A_M)] \ten [\D_N^{sign}]\\&=[\sym(\Gamma_M\tau_M,\D_M^{sign}(A_M))] \ten [\D_N^{sign}]\\
&=[\sym\bigl(\Gamma_M\tau_M,(\D_M^{sign} + \Gamma_M\tau_M \D_N^{sign})^{cyl}(\hat A_M)\bigr)]\\
&=[(\D_M^{sign} + \Gamma_M\tau_M \D_N^{sign})^{cyl}(\hat A_M)] \\
&=[\D_{M \times N}^{sign,cyl}(\hat A_M)] \ .
\end{align*}
 
The third equality does not follow directly from Theorem \ref{prod}, but its proof is  analogous.

This concludes the proof of the theorem.
 
\end{proof}

\subsection{The signature class in the odd case}
\label{oddcase}

Now let $M$ be odd-dimensional. Then for $\alpha \in \Lambda^*T^*\ra M \ten \F_M$
$$\tau_M\alpha=idx_1 \wedge \tau_{\ra M}\alpha$$
and $$\tau_M(dx_1 \wedge \alpha)=-i\tau_{\ra M} \alpha \ .$$

Since $\Gamma_M$ anticommutes with $\tau_M$, it induces an isomorphism $\Gamma_M:\Lambda^{\pm}T^*M\ten \F_M \to \Lambda^{\mp}T^*M\ten \F_M$.

The operator $d_M +d_M^*=d_M + \tau_M d_M \tau_M$ commutes with $\tau_M$ and anticommutes with $\Gamma_M$. The (odd twisted) signature operator $d_M^{sign}$ is defined as the restriction of $d_M + \tau_M d_M \tau_M$ to $\Omega^+(M,\F_M)$. 

Then
$$B(d_M^{sign})=i\Phi_M (d_{\ra M}\tau_{\ra M}-\tau_{\ra M}d_{\ra M})\Phi_M^{-1} \ .$$

Define the isometric isomorphism
$$\Pi:\Lambda^{ev}T^*M \ten \F_M \to \Lambda^+T^*M \ten \F_M,~ \alpha \mapsto \frac{1}{\sqrt 2}(\alpha + \tau_M\alpha)$$
Note the connection of
$$\Pi^{-1}d_M^{sign}\Pi=d_M\tau_M+\tau_Md_M$$
with the boundary operator in eq. \ref{boundsign}.

How in turn is the boundary operator of the odd signature operator related to the even signature operator? Consider the isometric isomorphism $$\Xi:\Lambda^* T^*\ra M \ten \F_{\ra M} \to (\Lambda^{ev}T^*M \ten \F_M)|_{\ra M},~\Xi(\alpha):=\frac 12(1+\Gamma_M)(\alpha+ dx_1\wedge\alpha) \ .$$ 
Define $\Sigma_M=\Pi \circ \Xi$. Hence $\Sigma_M(\alpha)=\frac{1}{\sqrt 2}(\alpha + \tau_M\alpha)$ if $\alpha \in \Lambda^{ev} T^*\ra M \ten \F_{\ra M}$ is even, and $\Sigma_M(\alpha)=\frac{1}{\sqrt 2}(dx_1 \wedge \alpha -i \tau_{\ra M}\alpha)$ if $\alpha \in \Lambda^{od} T^*\ra M \ten \F_{\ra M}$.

We have that $$B(d_M^{sign})=\Sigma_Md_{\ra M}^{sign}\Sigma_M^{-1} \ .$$
Furthermore one checks that
$$\gradu_{\ra M}=ic_M(dx_1)=\Sigma_M\tau_{\ra M}\Sigma_M^{-1} \ .$$

For the sake of conformity with \cite{lp6}, we will use this expression for the boundary operator in the following. The following definition is motivated by the boundary conditions considered in \cite[\S 6.4]{lp6}.

\begin{ddd}
\label{definvolod}
Assume given a orthogonal decomposition $\Omega^*_{(2)}(\ra M,\F_{\ra M})=V\oplus W$ with respect to which $\tau_{\ra M},~\Gamma_{\ra M}$ and $\D^{sign}_{\ra M}$ are diagonal. Furthermore assume that $\D^{sign}_{\ra M}|_V$ is invertible. Let $\inv$ be a bounded operator on $\Omega^*_{(2)}(\ra M,\F_{\ra M})$ vanishing on $V$ and whose restriction to $W$ is an involution anticommuting with $\tau_{\ra M}$ and commuting with $\D^{sign}_{\ra M}$ and $\Gamma_{\ra M}$. 

We call a trivializing operator $A$ of $B(d_M^{sign})$ {\rm symmetric} with respect to $\inv$ if it is diagonal with respect to the decomposition $\Sigma_M(V)\oplus \Sigma_M(W)$, vanishes on $\Sigma_M(V)$ and commutes with $\Sigma_M\inv\Sigma_M^{-1}$ on $\Sigma_M(W)$.

If $A$ is a symmetric trivializing operator, then the index class 
$$\sigma_{\inv}(M,\F_M):=[\D_M^{sign}(A)] \in K_1(\A)$$ 
is called the {\rm (twisted) signature class}. 

We call the symmetric trivializing operator $A_{\inv}:=\Sigma_M \inv \Gamma_{\ra M} \Sigma_M^{-1}$ the {\rm canonical} symmetric trivializing operator of $B(d_M^{sign})$ with respect to $\inv$.
\end{ddd}

Note that any symmetric bounded operator that is diagonal with respect to the decomposition $\Sigma_M(V)\oplus \Sigma_M(W)$, vanishes on $\Sigma_M(V)$, anticommutes with $\D^{sign}_{\ra M}$ and commutes with $\inv$ is a symmetric trivializing operator.

As in Lemma \ref{independ} one shows:

\begin{lem}
\label{independgen}
The twisted signature class $\sigma_{\inv}(M,\F_M)$ does not depend on the choice of the symmetric trivializing operator.  
\end{lem}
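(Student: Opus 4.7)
The plan is to follow the proof of Lemma \ref{independ} step by step, adapting the structures to the odd-dimensional setting. Let $A_0, A_1$ be two trivializing operators for $B(d_M^{sign})$, both symmetric with respect to $\inv$. Consider the cylinder $Z = \bbbr \times \ra M$ with its odd cylindrical signature operator $\D_Z^{sign}$. The orthogonal splitting $V \oplus W$ of $\Omega^*_{(2)}(\ra M, \F_{\ra M})$ extends to a translation-invariant splitting $\tilde V \oplus \tilde W$ of the relevant $L^2$-sections over $Z$; on $\tilde V$ the perturbed cylindrical operator is invertible, since each $A_j$ vanishes on $\Sigma_M(V)$ and $\D^{sign}_{\ra M}|_V$ is invertible.

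With cutoffs $\chi_0, \chi_1$ chosen exactly as in the proof of Lemma \ref{independ}, Prop.\ \ref{eqAPScyl} together with the relative index theorem yields
$$[\D_M^{sign}(A_0)] + \bigl[(\D_Z^{sign} - c(dx_1)(\chi_0 A_0 + \chi_1 A_1))|_{\tilde W}\bigr] = [\D_M^{sign}(A_1)]$$
in $KK_1(\bbbc, \A)$, the $\tilde V$-contributions cancelling by invertibility. It remains to show that the middle class vanishes.

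For this, I would convert the $KK_1$-class to a $KK_0(\bbbc, \A \ten C_1)$-class via the suspension $D \mapsto \sigma D$ on $\tilde W \ten C_1$ and then apply the Clifford-algebra trick of Lemma \ref{independ}. A candidate selfadjoint unitary on $\tilde W$ anticommuting with the perturbed cylindrical operator is $J = c(dx_1)\,\Sigma_Z \inv \Sigma_Z^{-1}$ (possibly corrected by $\Gamma_{\ra M}$ or by a factor of $i$), built from the commutation rules in Def.\ \ref{definvolod}: $\inv$ anticommutes with $\tau_{\ra M}$ and commutes with $\D^{sign}_{\ra M}$ and $\Gamma_{\ra M}$, while each $A_j$ commutes with $\Sigma_M \inv \Sigma_M^{-1}$ and is odd with respect to $\gradu_{\ra M}$. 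Then $J\sigma$ is an odd selfadjoint unitary on $\tilde W \ten C_1$ anticommuting with $\sigma D_{\mathrm{cyl}}$ and commuting with the right $\A \ten C_1$-action, equipping the Kasparov module with a left $C_1$-action. As in Lemma \ref{independ}, the resulting class lies in the image of $j^*:KK_0(C_1, \A \ten C_1) \to KK_0(\bbbc, \A \ten C_1)$, which vanishes because $[j] = 0 \in KK_0(\bbbc, C_1)$.

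The main obstacle is this last step: pinning down the precise form of $J$ and verifying that it genuinely anticommutes with the perturbed cylindrical operator on $\tilde W$. The odd-dimensional setting involves different sign conventions and commutation relations than the even case (notably $\inv$ \emph{commutes} with $\D^{sign}_{\ra M}$ rather than anticommuting with $\D^{bd}_{\ra M}$, and symmetric trivializing operators \emph{commute} with the conjugate of $\inv$ rather than anticommute), so the candidate $J$ may need adjustment by factors from $\{\Gamma_{\ra M}, i\}$. Once this is in place, the Clifford triviality argument carries over from Lemma \ref{independ} essentially verbatim via the passage to $KK_0(\bbbc, \A \ten C_1)$.
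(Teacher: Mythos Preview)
Your approach matches the paper's exactly: the relative index/cut-and-paste reduction, the restriction to $\tilde W$, and the Clifford vanishing argument via the passage $KK_1(\bbbc,\A)\cong KK_0(\bbbc,\A\ten C_1)$ are all as in the paper. The only thing you are missing is the correct choice of $J$, and the paper's choice is simpler than your candidate.

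The paper takes $\rho(\sigma)=\Sigma_M\inv\Sigma_M^{-1}$ itself, with \emph{no} $c(dx_1)$ factor. The point is that $c(dx_1)$ is already encoded by $\tau_{\ra M}$: from the discussion just before Def.\ \ref{definvolod} one has $\gradu_{\ra M}=ic_M(dx_1)=\Sigma_M\tau_{\ra M}\Sigma_M^{-1}$, i.e.\ $c_M(dx_1)=-i\,\Sigma_M\tau_{\ra M}\Sigma_M^{-1}$. Since $\inv$ anticommutes with $\tau_{\ra M}$, the conjugate $\Sigma_M\inv\Sigma_M^{-1}$ anticommutes with $c_M(dx_1)$. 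On the cylinder one then checks termwise: $\Sigma_M\inv\Sigma_M^{-1}$ commutes with $\partial_1$ (translation invariance), commutes with $\Sigma_M\D_{\ra M}^{sign}\Sigma_M^{-1}$ (because $\inv$ commutes with $\D_{\ra M}^{sign}$), and commutes with each $A_j$ (the symmetry condition). Combined with the anticommutation with $c(dx_1)$, this gives anticommutation with $\D_Z^{sign}=c(dx_1)(\partial_1-\Sigma_M\D_{\ra M}^{sign}\Sigma_M^{-1})$ and with each perturbation term $c(dx_1)\chi_jA_j$, hence with the full operator $D$ on $\tilde W$. Then $\ov\rho(\sigma)=\rho(\sigma)\sigma$ is the odd involution on $\tilde W\ten C_1$ anticommuting with $\sigma D$, and the $j^*$-vanishing argument finishes as you outlined.

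Your candidate $J=c(dx_1)\Sigma_Z\inv\Sigma_Z^{-1}$ does not work without modification: since $c(dx_1)$ itself fails to anticommute with $\D_Z^{sign}$ in the odd case (one computes $c(dx_1)\dira_Z+\dira_Zc(dx_1)=-2\partial_1$), inserting the extra $c(dx_1)$ spoils the anticommutation rather than providing it.
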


\begin{proof}
First we outline the general vanishing argument we are using: Consider a selfadjoint Fredholm operator $D$ on a countably generated ungraded Hilbert $C^*$-module $H$. Assume given a unital homomorphism $\rho:C_1 \to B(H)$ such that $\rho(\sigma)$ anticommutes with $D$. We define the even homomorphism $\ov{\rho}:C_1 \to B(H \ten C_1),~\ov{\rho}(\sigma)=\rho(\sigma)\sigma$. Then $\ov{\rho}(\sigma)$ anticommutes with $\sigma D$. Hence 
$$[\sigma D] \in \Ima \bigl(j^*:KK_0(C_1,\A \ten C_1) \to KK_0(\bbbc,\A \ten C_1)\bigr)=0 \ .$$ 
Thus $[D]=0$. (Note that the definition of Kasparov modules for unbounded Fredholm operators in \cite[Def. 2.4]{wa1}, which is the basis for our discussion, contains a sign error: In the odd case, instead of $[D,\rho(b)]$ it should read $(D\rho(b)- (-1)^{\deg b} \rho(b)D)$ for $b$ homogeneous.)

We consider the operator $D:=(\D_Z^{sign} -c(dx_1)(\chi_0 A_0 + \chi_1 A_1))|_{\tilde W}$ on $H:=\tilde W$ defined as in Lemma \ref{independ} with the obvious changes. Let $\rho:C_1 \to B(\tilde W)$ be the unital homomorphism defined by $\rho(\sigma)=\Sigma_M\inv\Sigma_M^{-1}$. Since $\rho(\sigma)$ anticommutes with $c_M(dx_1)=-i\Sigma_M\tau_{\ra M} \Sigma_M^{-1}$, it also anticommutes with $D$. Thus the class $[D] \in KK_1(\bbbc,\A)$ vanishes.     
\end{proof}

\begin{lem}
Let $\Omega^*_{(2)}(\ra M,\F_{\ra M})=V\oplus W$ be an orthogonal decomposition and let $\inv_j,~j=0,1$ be an involution on $W$ such that $\sigma_{\inv_j}(M,\F_M)$ is well-defined. Let $E^+$ be the positive and $E^-$ the negative eigenspace of $\tau_{\ra M}$ on $W$. We identify $E^-$ with $E^+$ using the isomorphism $\inv_0:E^- \to E^+$. There is a unitary $u$ on $E^+$ such that with respect to the decomposition $W=E^+ \oplus E^-$ 
$$\inv_1=\left(\begin{array}{cc} 0 & u^* \\ u & 0 \end{array}\right) \ .$$
Assume that union of the spectra of $u$ and $u^*$ is not equal to $S^1$. Then
$$\sigma_{\inv_0}(M,\F_M)=\sigma_{\inv_1}(M,\F_M) \ .$$
\end{lem}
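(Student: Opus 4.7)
The plan is to mirror the proof of part (2) of the analogous even-dimensional lemma: I construct a norm-continuous path of involutions $\inv_t$, $t\in[0,1]$, from $\inv_0$ to $\inv_1$, each of which satisfies the hypotheses of Definition~\ref{definvolod}, and then invoke homotopy invariance of the $KK_1(\bbbc,\A)$-class.

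First I would unpack the commutation relations on $W=E^+\oplus E^-$. Since $\tau_{\ra M}$ equals $\mathop{\rm diag}(+1,-1)$, each $\inv_j$ is off-diagonal. Because $\D^{sign}_{\ra M}$ commutes with $\tau_{\ra M}$, it is diagonal, $\D^{sign}_{\ra M}|_W=\mathop{\rm diag}(D_+,D_-)$; commutation of $\inv_0$ with $\D^{sign}_{\ra M}$ together with the identification $\inv_0:E^-\cong E^+$ forces $D_+=D_-=:D$. Then commutation of $\inv_1$ with $\D^{sign}_{\ra M}$ gives $uD=Du$, and the analogous computation with $\Gamma_{\ra M}$ shows $u$ commutes with $\Gamma_{\ra M}|_{E^+}$.

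Next I would extract a selfadjoint logarithm. The hypothesis $\mathrm{spec}(u)\cup\mathrm{spec}(u^*)\neq S^1$ provides a point $e^{i\theta_0}\in S^1$ with $e^{\pm i\theta_0}\notin\mathrm{spec}(u)$; a branch of $\log$ on a neighborhood of $\mathrm{spec}(u)\cup\mathrm{spec}(u^*)$ taking values on the imaginary axis then yields a bounded selfadjoint $a$ on $E^+$ with $u=e^{ia}$ and $u^*=e^{-ia}$, and continuous functional calculus places $a$ in the $C^*$-commutant of $D$ and $\Gamma_{\ra M}|_{E^+}$. Setting $u_t:=e^{ita}$ and
$$\inv_t:=\left(\begin{array}{cc} 0 & u_t^* \\ u_t & 0 \end{array}\right)$$
on $W$ (and zero on $V$), each $\inv_t$ is a selfadjoint involution on $W$, anticommutes with $\tau_{\ra M}$ by its off-diagonal form, and commutes with $\D^{sign}_{\ra M}$ and $\Gamma_{\ra M}$ because $u_t$ and $u_t^*$ do. A short calculation using $\D^{sign}_{\ra M}\inv_t=\inv_t\D^{sign}_{\ra M}$, $\D^{sign}_{\ra M}\Gamma_{\ra M}=-\Gamma_{\ra M}\D^{sign}_{\ra M}$ and $\inv_t^2=1$ on $W$ gives $(\D^{sign}_{\ra M}+\inv_t\Gamma_{\ra M})^2=(\D^{sign}_{\ra M})^2+1$ on $W$, so $A_{\inv_t}:=\Sigma_M\inv_t\Gamma_{\ra M}\Sigma_M^{-1}$ is a symmetric trivializing operator for every $t$ and depends norm-continuously on $t$. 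The family $\{\D_M^{sign}(A_{\inv_t})\}$ is then a norm-continuous family of regular selfadjoint Fredholm operators, and homotopy invariance of the unbounded Kasparov class yields $\sigma_{\inv_0}(M,\F_M)=\sigma_{\inv_1}(M,\F_M)$.

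The main obstacle is the logarithm step: the stronger spectral hypothesis (compared with the even case) is precisely what guarantees that a single branch of $\log$ simultaneously writes $u$ and $u^*$ as $e^{\pm ia}$ for the same selfadjoint $a$, so that the path $\inv_t$ keeps its off-diagonal selfadjoint form and preserves all commutation requirements. A secondary bookkeeping point is checking that the square identity above, combined with the invertibility of $\D^{sign}_{\ra M}|_V$ at the endpoints, propagates to uniform invertibility throughout the homotopy, so that $A_{\inv_t}$ remains a legitimate trivializing operator at every intermediate $t$.
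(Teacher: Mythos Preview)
Your overall strategy---connecting $\inv_0$ and $\inv_1$ by a path $\inv_t$ of involutions and invoking homotopy invariance---is the same as the paper's. However, there is a genuine computational error that breaks your derivation of the key commutation relation.

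You claim that $\D^{sign}_{\ra M}$ commutes with $\tau_{\ra M}$ and is therefore diagonal on $E^+\oplus E^-$. This is wrong: since $M$ is odd-dimensional, $\ra M$ is even-dimensional and $\D^{sign}_{\ra M}=d_{\ra M}+d_{\ra M}^*$ is odd with respect to the chirality grading $\tau_{\ra M}$, hence \emph{anticommutes} with it. Thus $\D^{sign}_{\ra M}|_W$ is off-diagonal; using the identification $\inv_0:E^-\cong E^+$ one finds
\[
\D^{sign}_{\ra M}|_W=\begin{pmatrix}0&D\\D&0\end{pmatrix},\qquad D=(\inv_0\D^{sign}_{\ra M})|_{E^+}.
\]
Commutation of $\inv_1$ with $\D^{sign}_{\ra M}$ then yields $Du=u^*D$, \emph{not} $uD=Du$. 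Consequently $a=-i\log(u)$ is not in the commutant of $D$; what one needs is $Da=-aD$, and this is precisely where the stronger spectral hypothesis enters: choosing a contour $C$ in the common resolvent set of $u$ and $u^*$ that winds once around both spectra, the relation $D(u-\lambda)^{-1}=(u^*-\lambda)^{-1}D$ converts the integral for $a$ into the same integral with $u^*$ in place of $u$, giving $Da=(-i\log u^*)D=-aD$. From this one obtains $Du_t=u_t^*D$, hence $\inv_t$ commutes with $\D^{sign}_{\ra M}$, and the rest of your argument (the square identity, the trivializing property of $A_{\inv_t}$, and the homotopy) goes through. Your explanation of the role of the hypothesis---that it ensures $u=e^{ia}$ and $u^*=e^{-ia}$ for the same selfadjoint $a$---is not the point: that identity is automatic once $a$ is selfadjoint; the hypothesis is needed to pass from $Du=u^*D$ to $Da=-aD$.
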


\begin{proof}
Since $\D^{sign}_{\ra M}$ commutes with $\inv_0$ and anticommutes with $\tau_{\ra M}$, it holds that $$\D^{sign}_{\ra M}=\left(\begin{array}{cc} 0 & D \\ D & 0 \end{array}\right)$$ with $D=(\inv_0\D^{sign}_{\ra M})|_{E^+}$. Furthermore $\D^{sign}_{\ra M}$ also commutes with $\inv_1$. This implies that $Du=u^*D$. Let $C$ be a loop in the intersection of the resolvent sets of $u$ and $u^*$. We assume that $C$ has winding number one with respect to any point in the spectra of $u$ and $u^*$ and that there is a path from the origin to infinity not intersecting the loop. We can define $$a=-i\log(u)=-\frac{1}{2\pi}\int_C \log(\lambda)(u-\lambda)^{-1}d\lambda $$ using any branch of the logarithm.
Then $Da=-aD$. Define $u_t=e^{ita}$ and $\inv_t=\left(\begin{array}{cc} 0 &u_t^* \\ u_t & 0 \end{array}\right)$. We get that $Du_t=u_t^*D$. This in turn implies that $\D_{\ra M}^{sign}$ commutes with $\inv_t$. Analogously $\Gamma_{\ra M}$ commutes with $\inv_t$. Thus the class $\sigma_{\inv_t}(M \times N,\F_M \boxtimes \F_N)$ is well-defined. By homotopy invariance it does not depend on $t$. 
\end{proof}

As in the even case one gets:

\begin{prop}
\label{propodd}
For $j=0,1$ let $\Omega^*_{(2)}(\ra M,\F_{\ra M})=V_j\oplus W_j$ be an orthogonal decomposition and let $\inv_j$ be an involution on $W_j$ such that $\sigma_{\inv_j}(M,\F_M)$ is well-defined.
Assume that $V_0=V_0 \cap V_1 \oplus V_0 \cap W_1$ and
$W_0=W_0\cap V_1 \oplus W_0 \cap W_1$ and that $\inv_0$ and $\inv_1$ restrict to involutions on $W_0 \cap W_1$. Let $\inv_0|_{W_0\cap W_1}$ and $\inv_1|_{W_0\cap W_1}$ fulfill the condition of the previous Lemma. Then
$$\sigma_{\inv_0}(M,\F_M)=\sigma_{\inv_1}(M,\F_M) \ .$$
\end{prop}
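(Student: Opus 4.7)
The plan is to mimic the proof of Prop. \ref{propev} with the adjustments appropriate to the odd-dimensional setting. Set $\tilde W:=W_0\cap W_1$ and let $\tilde V$ denote its orthogonal complement in $\Omega^*_{(2)}(\ra M,\F_{\ra M})$; the splitting hypothesis gives $\tilde V=V_0\oplus (W_0\cap V_1)=V_1\oplus (W_1\cap V_0)$. Define $\tilde \inv_j:=\inv_j|_{\tilde W}$. The goal is to prove $\sigma_{\inv_j}(M,\F_M)=\sigma_{\tilde \inv_j}(M,\F_M)$ for each $j$ and then $\sigma_{\tilde \inv_0}(M,\F_M)=\sigma_{\tilde \inv_1}(M,\F_M)$ via the preceding Lemma, combining the three equalities to conclude.

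The first step is to check that $\sigma_{\tilde \inv_j}(M,\F_M)$ is well-defined in the sense of Def. \ref{definvolod}. The operators $\tau_{\ra M}$, $\D^{sign}_{\ra M}$ and $\Gamma_{\ra M}$ are diagonal with respect to both of the original decompositions, hence with respect to their common refinement, and in particular with respect to $\tilde V\oplus \tilde W$; invertibility of $\D^{sign}_{\ra M}$ on $\tilde V$ follows from its invertibility on $V_0$ and $V_1$; and the required commutation and anticommutation of $\tilde \inv_j$ with $\tau_{\ra M}$, $\D^{sign}_{\ra M}$ and $\Gamma_{\ra M}$ are inherited from $\inv_j$.

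I would then establish the odd analog of the first part of the Lemma preceding Prop. \ref{propev}, namely $\sigma_{\inv_j}(M,\F_M)=\sigma_{\tilde \inv_j}(M,\F_M)$, by showing that any trivializing operator $A$ symmetric with respect to $\tilde \inv_j$ is automatically symmetric with respect to $\inv_j$. The verification runs along the same lines as in the even case: vanishing of $A$ on $\Sigma_M(V_j)$ follows from $V_j\subset \tilde V$; block-diagonality of $A$ with respect to $\Sigma_M(V_j)\oplus \Sigma_M(W_j)$ follows from the further splitting $W_j=\tilde W\oplus (W_j\cap V_{1-j})$ together with the fact that $A$ vanishes on $\Sigma_M(W_j\cap V_{1-j})\subset \Sigma_M(\tilde V)$; and the required commutativity with $\Sigma_M\inv_j\Sigma_M^{-1}$ on $\Sigma_M(W_j)$ reduces to commutativity with $\Sigma_M\tilde \inv_j\Sigma_M^{-1}$ on $\Sigma_M(\tilde W)$ together with the trivial commutation on $\Sigma_M(W_j\cap V_{1-j})$, which follows from $A$ vanishing there and from $\inv_j$ preserving both $\tilde W$ and its orthogonal complement in $W_j$.

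The final step is to apply the previous Lemma directly to the decomposition $\tilde V\oplus \tilde W$ and the involutions $\tilde \inv_0,\tilde \inv_1$ on $\tilde W$; the unitary $u$ and the spectrum hypothesis in Prop. \ref{propodd} are exactly what the Lemma requires, so it yields $\sigma_{\tilde \inv_0}(M,\F_M)=\sigma_{\tilde \inv_1}(M,\F_M)$. The main obstacle I anticipate is the middle step: in the odd setting, the symmetric trivializing condition involves commutation rather than anticommutation with the involution, and one must check carefully that the passage between $\inv_j$ and its restriction $\tilde \inv_j$ respects this — in particular that $\inv_j$ leaves $W_j\cap V_{1-j}$ invariant, which is needed for the block structure argument and is the analog of a step that is only implicit in the even-dimensional proof.
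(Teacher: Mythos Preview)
Your proposal is correct and follows exactly the approach the paper intends: the paper gives no separate proof but simply writes ``As in the even case one gets'', referring to the two-step argument of Prop.~\ref{propev}. You have correctly spelled out what this means in the odd setting, including the odd analogue of part~(1) of the Lemma preceding Prop.~\ref{propev} (which the paper leaves implicit), and your anticipated obstacle---that $\inv_j$ preserves $W_j\cap V_{1-j}$---is handled by the selfadjointness of the involutions together with the assumed restriction to $W_0\cap W_1$.
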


The boundary conditions introduced in the following are a special case of those in \cite[\S 6.4]{lp6}. 

Let $m=(\dim M -1)/2$.

Let $V_M$ be the closure of $d^*\Omega^m(\ra M,\F_{\ra M}) \oplus d\Omega^{m-1}(\ra M,\F_{\ra M}) \oplus d^* \Omega^{m+1}(\ra M,\F_{\ra M})\oplus d\Omega^m(\ra M,\F_{\ra M})$ in $\Omega_{(2)}^*(\ra M,\F_{\ra M})$ and let $W_M=V_M^{\perp}$. 

The operators $d,d^*,\tau_{\ra M}$ act on $V_M$ and $W_M$. 

We make the following assumption:

\begin{ass}
\label{assodd}
The closure of $d:\Omega^{m-1}(\ra M,\F_{\ra M}) \to \Omega_{(2)}^{m}(\ra M,\F_{\ra M})$ has closed range. 
\end{ass}

It follows that $\Omega_{(2)}^*(\ra M,\F_{\ra M})=V_M \oplus W_M$ and that $\D^{sign}_{\ra M}$ is invertible on $V_M$.

Let $\cH_{\ra M} \subset W_M$ be the kernel of the Laplacian $\Delta_{\ra M}$ restricted to $\Omega_{(2)}^m(\ra M,\F_{\ra M})$. The Assumption implies that $\cH_{\ra M}$ is a projective $\A$-module. In particular it has an orthogonal complement.

Denote by $\cH_{\ra M}^{\pm}$ the positive resp. negative eigenspace of $\tau_{\ra M}$ restricted to $\cH_{\ra M}$. We also make the following assumption, which is not present in \cite{lp6}. In some of the situations we consider it will be automatically fulfilled. Furthermore it can always be enforced by a stabilization procedure, see \S \ref{stab} for a discussion.

\begin{ass}
\label{assunnec}
The spaces $\cH_{\ra M}^{\pm}$ are isomorphic $\A$-modules.
\end{ass}

This assumption is equivalent to the assumption that there is a submodule $L \subset \cH_{\ra M}$ that is Lagrangian with respect to the skewhermitian form on $\cH_{\ra M}$ induces by $i\tau_{\ra M}$. Let $L^{\perp}$ be its orthogonal complement in $\cH_{\ra M}$. Recall that the definition of a Lagrangian includes the condition $L \oplus L^{\perp}=\cH_{\ra M}$, which is nontrivial for $C^*$-modules.

Let $\Omega^{<}_{\F_{M}}$ be the closed subspace of $W_M$ spanned by forms of degree smaller than $m$, and define $\Omega^{>}_{\F_{M}}$ as the subspace spanned by forms of degree bigger than $m$. 

Let $\alpha_M^L$ be the involution on $W_M$ with positive eigenspace $\Omega^{<}_{\F_M} \oplus L$ and negative eigenspace $\Omega^{>}_{\F_M} \oplus L^{\perp}$. Then $\alpha_M^L$ commutes with $\D^{sign}_{\ra M}$ and $\Gamma_{\ra M}$ and anticommutes with $\tau_{\ra M}$. Thus $\sigma^L(M,\F_M):=\sigma_{\alpha_M^L}(M,\F_M)$ is well-defined. 

If $L_1,L_2\in \cH_{\ra M}$ are two Lagrangians, then there is a difference element $[L_1-L_2] \in K_1(\A)$ and it holds that
$$\sigma^{L_1}(M,\F_M)-\sigma^{L_2}(M,\F_M)=[L_1-L_2] \ .$$
The difference element was described and the statement proven in \cite[\S 6.4]{lp6} using a different definition of odd index classes (via suspension). For the definition used here the result follows from \cite[\S 7-8]{wa1}.  

The difference element vanishes for example if $L_1$ and $L_2$ are homotopic through a path of Lagrangians. 

\section{Product formula for twisted signature classes -- the remaining cases}

In this section we do not make any a priori assumption on the dimensions of $M$ and $N$. We assume that the de Rham operators on $\Omega^*(\ra M,\F_{\ra M})$ and on $\Omega^*(\ra M \times N,\F_{\ra M}\boxtimes \F_N)$ fulfill Assumption \ref{ass} or \ref{assodd}, depending on the dimension of $\ra M$ resp. $\ra M\times N$. 

A warning about gradings: We consider the gradings as they arise in \S \ref{proddir}. In particular vector bundles can only be graded if the underlying manifold is even-dimensional. This implies that the chirality operator $\tau$ need not be a grading operator. Also the grading on the product is as defined in \S \ref{proddir}. 

The proof of the following Lemma is analogous to the proof of Lemma \ref{dirsum}:

\begin{lem}
\label{dirsumgen}
It holds that
\begin{align*}
V_{M\times N}&=(V_M \ten \Omega_{(2)}^*(N,\F_N))\cap V_{M\times N} \oplus  (W_M \ten \Omega_{(2)}^*(N,\F_N))\cap V_{M\times N}\\
W_{M\times N}&=(V_M \ten \Omega_{(2)}^*(N,\F_N))\cap W_{M\times N} \oplus  (W_M \ten \Omega_{(2)}^*(N,\F_N))\cap W_{M\times N} \ .
\end{align*}

If the dimension of $M \times N$ is even, then the operator $\D^{bd}_{\ra M \times N}$ respects the decompositions on the right hand side and is invertible on $V_M \ten \Omega_{(2)}^*(N,\F_N)$.

If the dimension of $M \times N$ is odd, then an analogous statement holds for the operator $\D^{sign}_{\ra M \times N}$.
\end{lem}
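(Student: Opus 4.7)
The plan is to imitate the proof of Lemma \ref{dirsum}, with only combinatorial adjustments reflecting the different definitions of $V_M, W_M$ and $V_{M\times N}, W_{M\times N}$ that apply in each parity case. The starting observation is that the product formula $d_{\ra M \times N} = d_{\ra M}\ten 1 + \Gamma_{\ra M}\ten d_N$, together with its analogues for $d^*_{\ra M \times N}$ and $\tau_{\ra M \times N}$, combined with the facts that $d_{\ra M}$, $d^*_{\ra M}$, $\tau_{\ra M}$ each preserve $V_M$ and $W_M$ individually and that $\Gamma_{\ra M}$ is diagonal in the degree decomposition, implies that each of $d_{\ra M \times N}, d^*_{\ra M \times N}, \tau_{\ra M \times N}$ preserves both $V_M \ten \Omega^*_{(2)}(N,\F_N)$ and $W_M \ten \Omega^*_{(2)}(N,\F_N)$.

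For the two direct sum decompositions I would reduce, as in the proof of Lemma \ref{dirsum}, to the finite set of relevant middle degrees that enter into the definition of $V_{M\times N}$: two degrees if $\dim(M \times N)$ is even and three degrees if it is odd. In every other degree a form lies automatically in $W_{M\times N}$ and also, by the preservation property, in either $V_M \ten \Omega^*_{(2)}(N,\F_N)$ or $W_M \ten \Omega^*_{(2)}(N,\F_N)$. For the first equation, I would take $\gamma$ of the form $d(\alpha\wedge\beta)$ or $d^*(\alpha\wedge\beta)$ with $\beta \in \Omega^*(N,\F_N)$ and with $\alpha$ in $V_M$ or in $W_M$, and use that $d_{\ra M}$ and $d^*_{\ra M}$ restrict to operators on $V_M$ and on $W_M$ to place $\gamma$ in the corresponding tensor summand. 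For the second equation, I would decompose any $\gamma \in W_{M\times N}$ in the relevant degree as $\gamma_1 + \gamma_2$ into its tensor summands and use the orthogonality of $V_M \ten \Omega^*_{(2)}(N,\F_N)$ and $W_M \ten \Omega^*_{(2)}(N,\F_N)$ together with the preservation property of $d_{\ra M\times N}$ and $d^*_{\ra M\times N}$ to split the kernel equations $d\gamma=0$ and $d^*\gamma = 0$ componentwise, forcing $\gamma_1, \gamma_2 \in W_{M\times N}$.

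Diagonality of $\D^{bd}_{\ra M \times N}$ (in the even-product case) or $\D^{sign}_{\ra M \times N}$ (in the odd-product case) with respect to the right-hand side is immediate from the preservation property of its constituents $d_{\ra M\times N}, d^*_{\ra M\times N}$ and $\tau_{\ra M\times N}$. For invertibility on $V_M \ten \Omega^*_{(2)}(N,\F_N)$, I would argue exactly as in Lemma \ref{dirsum}: the square of the operator is the full Laplacian $\Delta_{\ra M \times N} = \Delta_{\ra M}\ten 1 + 1 \ten \Delta_N$, which is strictly positive on $V_M \ten \Omega^*_{(2)}(N,\F_N)$ because $\Delta_{\ra M}$ is strictly positive on $V_M$ and $\Delta_N \ge 0$; selfadjointness then yields invertibility of the operator itself on that subspace.

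The main obstacle, if there is one, is purely bookkeeping across the three parity combinations $(\dim M,\dim N) \in \{(\mathrm{odd},\mathrm{odd}),(\mathrm{odd},\mathrm{even}),(\mathrm{even},\mathrm{odd})\}$, since each gives a slightly different list of relevant middle degrees and hence a slightly different combinatorial check in the first step. The underlying algebraic mechanism, however, is uniform: tensoring by $\Omega^*_{(2)}(N,\F_N)$ is transparent with respect to the spectral decomposition of $d_{\ra M}$ on $V_M \oplus W_M$, which is the same mechanism that drives the proof of Lemma \ref{dirsum}.
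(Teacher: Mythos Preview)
Your proposal is correct and follows exactly the approach the paper intends: it explicitly states that the proof is analogous to that of Lemma~\ref{dirsum}, and your outline faithfully reproduces that argument with the appropriate adjustments for the three remaining parity cases. The only content beyond Lemma~\ref{dirsum} is the bookkeeping you identify (the extra middle degree when $\dim(M\times N)$ is odd, and the odd-case definition of $V_M$), and your handling of it is accurate.
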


The definition of the space $V_N$, which appears in the statement of the following lemma, is the analogue of the definition of $V_M$ for the de Rham operator on $\Omega^*(N,\F_N)$. 

\begin{lem}
\label{hprop}
\begin{enumerate}
\item
If $M\times N$ is odd-dimensional, then 
$$\cH_{\ra M\times N}\subset W_M \ten \Omega_{(2)}^*(N,\F_N) \ .$$
\item
If $M$ is odd-dimensional, then
$$\cH_{\ra M}\ten \Omega_{(2)}^*(N,\F_N)=(\cH_{\ra M}\ten \Omega_{(2)}^*(N,\F_N)) \cap V_{M \times N} \oplus (\cH_{\ra M}\ten \Omega_{(2)}^*(N,\F_N)) \cap W_{M \times N} \ .$$
Furthermore
$$(\cH_{\ra M}\ten \Omega_{(2)}^*(N,\F_N)) \cap V_{M \times N}=\cH_{\ra M} \ten V_N \ .$$ 
In particular
$$V_{M \times N}=(\cH_{\ra M}\ten \Omega_{(2)}^*(N,\F_N)) \cap V_{M \times N} \oplus (\cH_{\ra M}^{\perp}\ten \Omega_{(2)}^*(N,\F_N)) \cap V_{M \times N}\ ,$$
$$W_{M \times N}=(\cH_{\ra M}\ten \Omega_{(2)}^*(N,\F_N)) \cap W_{M \times N} \oplus (\cH_{\ra M}^{\perp}\ten \Omega_{(2)}^*(N,\F_N)) \cap W_{M \times N}\ .$$
\end{enumerate}
\end{lem}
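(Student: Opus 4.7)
The plan is to exploit two compatible decompositions of $\Omega^*_{(2)}(\ra M,\F_{\ra M})$: the splitting $V_M\oplus W_M$ provided by Assumption \ref{ass}/\ref{assodd}, and the finer orthogonal splitting $\cH_{\ra M}\oplus \cH_{\ra M}^\perp$. Both interact well with $d_{\ra M\times N}=d_{\ra M}\ten 1+\Gamma_{\ra M}\ten d_N$ and its adjoint, and the whole lemma will reduce to a commutation argument together with a degree count.

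For (1), I would first observe that $d_{\ra M}$, $d^*_{\ra M}$ and $\Gamma_{\ra M}$ all respect the splitting $V_M\oplus W_M$, so the induced tensor decomposition of $\Omega^*_{(2)}(\ra M\times N,\F_{\ra M}\boxtimes\F_N)$ is preserved by $\Delta_{\ra M\times N}=\Delta_{\ra M}+\Delta_N$. Writing $\omega\in\cH_{\ra M\times N}$ as $\omega=\omega_V+\omega_W$ with $\omega_V\in V_M\ten\Omega^*_{(2)}(N,\F_N)$, both summands are individually harmonic. Since $\Delta_{\ra M}$ is invertible on $V_M$ (by the Assumption) and $\Delta_N\ge 0$, the operator $\Delta_{\ra M\times N}$ is bounded below by a positive constant on $V_M\ten\Omega^*_{(2)}(N,\F_N)$, forcing $\omega_V=0$.

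For (2), the key point is that the orthogonal projection $P_{\cH}\ten 1$ onto $\cH_{\ra M}\ten \Omega^*_{(2)}(N,\F_N)$ commutes with $d_{\ra M\times N}$ and $d_{\ra M\times N}^*$. On the $\ra M$-factor, $P_{\cH}d_{\ra M}$ vanishes because $\cH_{\ra M}\perp\Ima d_{\ra M}$, and $d_{\ra M}P_{\cH}$ vanishes because harmonic forms are closed (analogously for $d_{\ra M}^*$); and $P_{\cH}$ commutes with $\Gamma_{\ra M}$ since $\cH_{\ra M}$ sits in the single degree $m$. Consequently $P_{\cH}\ten 1$ preserves $V_{M\times N}$ and $W_{M\times N}$, so the first decomposition in (2) is nothing but the restriction of $\Omega^*_{(2)}(\ra M\times N,\F_{\ra M}\boxtimes\F_N)=V_{M\times N}\oplus W_{M\times N}$ to $\cH_{\ra M}\ten\Omega^*_{(2)}(N,\F_N)$. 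The two ``in particular'' splittings of $V_{M\times N}$ and $W_{M\times N}$ follow by applying the same argument to $1-P_\cH\ten 1$, which is well-defined since Assumption \ref{assodd} makes $\cH_{\ra M}$ projective.

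Finally, to identify $(\cH_{\ra M}\ten\Omega^*_{(2)}(N,\F_N))\cap V_{M\times N}$ with $\cH_{\ra M}\ten V_N$ I would use that this intersection equals $(P_\cH\ten 1)V_{M\times N}$ by the previous paragraph, and that for $h\in\cH_{\ra M}$ the relations $d_{\ra M}h=d_{\ra M}^* h=0$ give $d_{\ra M\times N}(h\ten\gamma)=(-1)^m h\ten d_N\gamma$ and likewise for $d_{\ra M\times N}^*$. Applying $P_\cH\ten 1$ to each of the two or four generating pieces of $V_{M\times N}$ thus produces $\cH_{\ra M}$ tensored with the corresponding generators of $V_N$. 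The main obstacle I anticipate is the bookkeeping at this last step: the definition of $V_N$ has two or four generators according to the parity of $\dim N$, so one must check case by case ($M$ odd with $N$ even, and $M$ odd with $N$ odd) that the parities of $M$, $N$ and $M\times N$ conspire so that the $N$-degrees surviving the projection are exactly those appearing in the definition of $V_N$ for the closed manifold $N$.
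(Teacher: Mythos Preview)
Your argument is correct, and for part (2) it takes a cleaner route than the paper's. For part (1) the two approaches coincide: the paper simply asserts that $V_M\otimes\Omega^*_{(2)}(N,\F_N)$ is orthogonal to $\cH_{\ra M\times N}$, which is exactly your invertibility argument for $\Delta_{\ra M\times N}=\Delta_{\ra M}+\Delta_N$ on that factor.

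For part (2), however, the paper does not argue via the projection $P_{\cH}\otimes 1$. Instead it works element by element: given $\alpha\in\cH_{\ra M}$ and $\beta\in\Omega^k(N,\F_N)$, it writes $\alpha\wedge\beta=d\omega_1+d^*\omega_2+\omega_3$ using Lemma~\ref{dirsumgen}, and then shows $\omega_1,\omega_2\in\cH_{\ra M}\otimes\Omega^*_{(2)}(N,\F_N)$ by applying the inverse Laplacian (e.g.\ $\omega_2=\pm\Delta^{-1}(\alpha\wedge d\beta)$ and then $\Delta_{\ra M}\omega_2=\pm\Delta^{-1}(\Delta_{\ra M}\alpha\wedge d\beta)=0$). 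Your observation that $P_{\cH}\otimes 1$ commutes with $d_{\ra M\times N}$ and $d^*_{\ra M\times N}$ --- because $P_{\cH}d_{\ra M}=d_{\ra M}P_{\cH}=0$ and $P_{\cH}$ commutes with $\Gamma_{\ra M}$ --- replaces this computation by a one-line projection argument, and also yields the last two decompositions immediately. The paper's method has the mild advantage of giving the identification with $\cH_{\ra M}\otimes V_N$ in the same breath, whereas in your approach the degree bookkeeping for $(P_{\cH}\otimes 1)V_{M\times N}$ still has to be written out; but as you note, this is routine once one tracks the parity of $M\times N$ and observes that $(P_{\cH}\otimes 1)d\Omega^{k}=\cH_{\ra M}\otimes d_N\Omega^{k-m}$ for the relevant $k$.
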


\begin{proof}
1) It is straight-forward to check that $V_M \ten \Omega_{(2)}^*(N,\F_N)$ is orthogonal to $\cH_{\ra M\times N}$.

2) Let $\alpha \in \cH_{\ra M}, ~\beta \in \Omega^k(N,\F_N)$. We only consider the case where $N$ is even-dimensional and $k=\dim N/2$ and leave the other cases to the reader. By the previous Lemma $\alpha \wedge \beta=d\omega_1 +d^*\omega_2+ \omega_3$, where $\omega_1,\omega_2 \in V_{M\times N}$ and $\omega_3 \in W_{M\times N}$. Note that $d^*\omega_1=0, d\omega_2=0$. It follows that $d(\alpha \wedge \beta)=dd^*\omega_2= \Delta \omega_2 \in V_{M\times N}$. Thus $\omega_2=(-1)^{\dim \ra M/2}\Delta^{-1} (\alpha \wedge d\beta)$. It holds that $\Delta_{\ra M}\omega_2=(-1)^{\dim \ra M/2}\Delta^{-1}(\Delta_{\ra M}\alpha\wedge d\beta)=0$. Thus $\omega_2 \in \Ker\Delta_{\ra M}= \cH_{\ra M}\ten \Omega_{(2)}^*(N,\F_N)$. In a similar way one concludes that $\omega_1 \in \cH_{\ra M}\ten \Omega_{(2)}^*(N,\F_N)$. This implies the first equality.
Clearly $d\omega_1 \in \cH_{\ra M} \ten d\Omega^{k-1}(N,\F_N)$ and $d^*\omega_1 \in \cH_{\ra M} \ten d^* \Omega^{k+1}(N,\F_N)$. Thus if $\omega_3=0$, then $\alpha \wedge \beta \in  \cH_{\ra M} \ten V_N$. 

In order to show that $\cH_{\ra M} \ten V_N \subset V_{M\times N}$ it is enough to check that $\cH_{\ra M} \ten V_N$ is orthogonal to $W_{M \times N}$, which is straight-forward.

The last two equations follow from the first in an elementary way.
\end{proof}

Now we prove the product formula in the remaining three cases. The general strategy is as in the proof of Theorem \ref{prodev}.

\subsection{$M$ is even-dimensional and $N$ is odd-dimensional}

We require that Assumption \ref{ass} holds for the de Rham operator on $\Omega^*(\ra M,\F_{\ra M})$ and Assumption \ref{assodd} holds for the de Rham operator on $\Omega^*(\ra M \times N,\F_{\ra M}  \boxtimes \F_N)$.

Lemma \ref{hprop} implies that the involution $\alpha_M\tau_{\ra M} \ten \Gamma_N\tau_N$ restricts to an involution on $\cH_{\ra M \times N}$. Furthermore it anticommutes with $\tau_{\ra M \times N}=-i\tau_{\ra M}\Gamma_{\ra M}\tau_N$.
We define the Lagrangian $L \subset \cH_{\ra M \times N}$ to be its positive eigenspace. (Thus Assumption \ref{assunnec} is fulfilled as well.)

\begin{prop}
\label{prodevod}
It holds that
$$\sigma(M,\F_M) \ten \sigma(N,\F_N)=\sigma^L(M \times N,\F_M \boxtimes \F_N) \in K_1(\A \ten \B) \ .$$
\end{prop}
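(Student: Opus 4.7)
The plan is to mirror the proof of Theorem \ref{prodev}, combined with the ``even times odd'' identifications of \S\ref{evodd} and the odd signature formalism of \S\ref{oddcase}. I first treat the closed case ($M$ has no boundary). Then $M\times N$ is closed and odd-dimensional, and by the ``even times odd'' Kasparov product formula from \S\ref{produnbound},
$$[\D_M^{sign}]\ten[\D_N^{sign}]=[\D_M^{sign}+\tau_M\D_N^{sign}]\in KK_1(\bbbc,\A\ten\B),$$
realised on $\Omega^*_{(2)}(M,\F_M)\ten\Omega^+_{(2)}(N,\F_N)$. Using the identity $d_{M\times N}+d_{M\times N}^*=(d_M+d_M^*)\ten 1+\Gamma_M\ten(d_N+d_N^*)$ from \eqref{signprod}, together with a canonical unitary built from $\frac{1}{\sqrt 2}(1+\tau_{M\times N})$, I identify this representative with $\D_{M\times N}^{sign}$ on $\Omega^+_{(2)}(M\times N,\F_M\boxtimes\F_N)$, settling the closed case.

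Now suppose $M$ has boundary. Let $A_M=i\Phi_M\alpha_M\tau_{\ra M}\Phi_M^{-1}$ be the canonical symmetric trivializing operator for $B(d_M^{sign})$ with respect to $\alpha_M$. By Theorem \ref{prod} in the form of \S\ref{evodd},
$$\sigma(M,\F_M)\ten\sigma(N,\F_N)=[\D_M^{sign}(A_M)]\ten[\D_N^{sign}]=[\D_{M\times N}(\hat A_M)],$$
with $\hat A_M=\Psi\Gamma_1(A_M\ten 1)\Psi^{-1}$. I then transport this equality through the product analogue of the isomorphism $\Sigma_{M\times N}$ of \S\ref{oddcase}, composed with the closed-case unitary, to realise the right hand side as $[\D_{M\times N}^{sign}(A)]$ for a trivializing operator $A$ on a subspace of $\Omega^*_{(2)}(\ra M\times N,\F_{\ra M}\boxtimes\F_N)$. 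A direct computation strictly analogous to Sublemma \ref{sublevev} (tracking $\Psi$, $\Phi_M$ and $\Sigma_{M\times N}$) should show that $A$ is the canonical symmetric trivializing operator $A_\inv$ for an explicit involution $\inv$ supported on $\tilde W_M:=W_M\ten\Omega^*_{(2)}(N,\F_N)$ and built from $\alpha_M$ together with the structures from $N$.

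It remains to show $\sigma_\inv(M\times N,\F_M\boxtimes\F_N)=\sigma^L(M\times N,\F_M\boxtimes\F_N)$. By Lemma \ref{dirsumgen} and Lemma \ref{hprop}(1), one has $\cH_{\ra M\times N}\subset W_M\ten\Omega^*_{(2)}(N,\F_N)=\tilde W_M$, so both $\inv$ and $\alpha_{M\times N}^L$ restrict to involutions on $\tilde W_M\cap W_{M\times N}$. The Lagrangian $L$ has been chosen precisely as the positive eigenspace of $\alpha_M\tau_{\ra M}\ten\Gamma_N\tau_N$ on $\cH_{\ra M\times N}$ so that the spectral hypothesis of the lemma preceding Proposition \ref{propodd} holds for the pair $(\inv,\alpha_{M\times N}^L)$; Proposition \ref{propodd} then yields the desired equality. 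The main obstacle will be this last identification: carefully tracking the several isomorphisms to pin down $\inv$ on $\tilde W_M\cap W_{M\times N}$, and then verifying that the choice of $L$ produces a spectrally admissible difference with $\alpha_{M\times N}^L$. The role of Lemma \ref{hprop}(1) and of the symmetry of $\alpha_M\tau_{\ra M}\ten\Gamma_N\tau_N$ is precisely to enable this compatibility.
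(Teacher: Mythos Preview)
Your overall strategy is the same as the paper's: use the even-times-odd Kasparov product on the closed model, then for $M$ with boundary pass through a suitable isomorphism to land in the odd signature formalism, identify an involution $\inv$ on $W_M\ten\Omega^*_{(2)}(N,\F_N)$, and invoke Proposition~\ref{propodd} together with Lemmas~\ref{dirsumgen} and~\ref{hprop}(1) to compare $\sigma_\inv$ with $\sigma^L$.

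Two points where the paper is more precise than your sketch. First, the intertwining unitary $\Theta:\Omega^*_{(2)}(M,\F_M)\ten\Omega^+_{(2)}(N,\F_N)\to\Omega^+_{(2)}(M\times N,\F_M\boxtimes\F_N)$ is not the map $\frac{1}{\sqrt 2}(1+\tau_{M\times N})$ you suggest; it is defined as the identity on $(1+\Gamma_M\tau_M)\Omega^*\ten\Omega^+$ and as $1\ten\Gamma_N$ on $(1-\Gamma_M\tau_M)\Omega^*\ten\Omega^+$. With this $\Theta$ one computes $\tau_M\Theta\D_N^{sign}\Theta^{-1}=\Gamma_M\ten(d_N+\tau_Nd_N\tau_N)$, which together with \eqref{signprod} gives $\D^{sign}_{M\times N}$ exactly. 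Second, the boundary sublemma here (the analogue of Sublemma~\ref{sublevev}) is substantially longer: one composes $\Sigma_{M\times N}^{-1}\circ\Theta\circ\Psi\circ(\Phi_M\oplus\Phi_M)$ and tracks it on elements of the form $(\alpha,\pm i\tau_{\ra M}\alpha)\wedge(\beta+\tau_N\beta)$, eventually obtaining $\tilde\alpha_M=\alpha_M\tau_{\ra M}\ten\Gamma_N\tau_N$ explicitly (which is exactly the involution whose positive eigenspace on $\cH_{\ra M\times N}$ defines $L$). A further subtlety: $\tilde\alpha_M$ \emph{anticommutes} with $\alpha_{M\times N}^L$ (rather than commuting as in the even-even case), which is what makes the spectral hypothesis of Proposition~\ref{propodd} hold. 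Finally, the transported operator $\Theta\hat A_M\Theta^{-1}$ is shown to be \emph{a} symmetric trivializing operator with respect to $\inv$, not necessarily the canonical one $A_\inv$; this is harmless by Lemma~\ref{independgen}, but your expectation that it equals $A_\inv$ is not verified.
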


\begin{proof} 
We have that $$\tau_{M\times N}=\tau_M\Gamma_M \tau_N=\Gamma_M\tau_M \tau_N \ .$$

First assume that $M$ is closed.

By the description of the Kasparov product in \S \ref{produnbound},
$$[\D_M^{sign} + \tau_M\D_N^{sign}]=[\D_M^{sign}] \ten [\D_N^{sign}] \in KK_1(\bbbc,\A \ten \B) \ .$$
The operator $\D_M^{sign} + \tau_M\D_N^{sign}$ acts on $\Omega_{(2)}^*(M,\F_M) \ten \Omega_{(2)}^+(N,\F_N)$. Let 
$$\Theta: \Omega_{(2)}^*(M,\F_M) \ten \Omega_{(2)}^+(N,\F_N)\to \Omega_{(2)}^+(M\times N,\F_M\boxtimes \F_N)$$ be the isomorphism that equals $1 \ten \Gamma_N$ from  $(1-\Gamma_M\tau_M)\Omega^*(M,\F_M) \ten \Omega^+(N,\F_N)$ to $(1-\Gamma_M\tau_M)\Omega^*(M,\F_M) \ten \Omega^-(N,\F_N)$ and the identity on $(1+\Gamma_M\tau_M)\Omega^*(M,\F_M) \ten \Omega^+(N,\F_N)$.

Then $$[\D_M^{sign} + \tau_M\D_N^{sign}]=[\Theta(\D_M^{sign} + \tau_M\D_N^{sign})\Theta^{-1}]=[\D_M^{sign} + \tau_M\Theta \D_N^{sign}\Theta^{-1}] \ .$$
 
For $\alpha \in (1-\Gamma_M\tau_M)\Omega^*(M,\F_M),~\beta \in \Omega^-(N,\F_N)$ 
\begin{align*}
\Theta \D_N^{sign}\Theta^{-1}(\alpha \wedge \beta)&= -\alpha \wedge (d_N+ \tau_N d_N\tau_N)\beta \ .
\end{align*}

Note that the restrictions of $\Gamma_M\tau_M$ and $\tau_N$ to $\Omega^+(M\times N,\F_M\boxtimes \F_N)$ agree. We have that
\begin{align*}
\lefteqn{\tau_M\Theta \D_N^{sign}\Theta^{-1}}\\
&=\frac 12\Gamma_M\tau_N\Bigl((1+\tau_N)(1 \ten (d_N+ \tau_N d_N\tau_N))- (1-\tau_N)(1 \ten (d_N+ \tau_N d_N\tau_N))\Bigr)  \\
&=\Gamma_M\ten (d_N+ \tau_N d_N\tau_N) \ .
\end{align*}

Thus $$\D_{M\times N}^{sign}=\D_M^{sign}+ \tau_M\Theta \D_N^{sign}\Theta^{-1}$$
and therefore
$$[\D_M^{sign}] \ten [\D_N^{sign}]=[\D_{M\times N}^{sign}] \ .$$

Now let $M$ be a manifold with boundary. 

Recall the quantities indexed by $M$, as $\alpha_M,~ V_M,~ W_M$, which were defined in \S \ref{even}. Furthermore $\Psi,\Gamma_2$ are as in \S \ref{evodd}.

Define the involution 
$$\tilde \alpha_M:=(\Sigma_{M\times N}^{-1}\circ\Theta \circ\Psi) (\Gamma_2 \Phi_M\alpha_M\Phi_M^{-1})(\Sigma_{M\times N}^{-1}\circ\Theta \circ \Psi)^{-1}$$ on 
$$\tilde W_M=(\Sigma_{M\times N}^{-1}\circ\Theta \circ \Psi)\bigl((\Phi_M(W_M)\oplus \Phi_M(W_M))\ten \Omega_{(2)}^+(N,\F_N)\bigr) \ .$$ Set 
$$\tilde V_M=(\Sigma_{M\times N}^{-1}\circ\Theta \circ\Psi)\bigl((\Phi_M(V_M) \oplus \Phi_M(V_M))\ten \Omega_{(2)}^+(N,\F_N)\bigr)$$ and let 
$\tilde W_M^{\pm}\subset \tilde W_M$ be the positive resp. negative eigenspace of $\tilde \alpha_M$.

\begin{sublem}
\label{sublevod}
\begin{enumerate}
\item It holds that 
\begin{align*}
\tilde V_M&=V_M \ten \Omega_{(2)}^*(N,\F_N) \\
\tilde W_M&=W_M \ten \Omega_{(2)}^*(N,\F_N) 
\end{align*}
and that $\tilde \alpha_M=\alpha_M\tau_{\ra M} \ten \Gamma_N \tau_N$.
\item The operator $\tilde \alpha_M$ commutes with $\D^{sign}_{\ra M \times N}$ and $\Gamma_{\ra M\times N}$ and anticommutes with $\tau_{\ra M\times N}$ and $\alpha_{M\times N}$.
\end{enumerate}
\end{sublem}
 
\begin{proof}
Let $\alpha_1,\alpha_2 \in \Lambda^*T^*\ra M\ten \F_{\ra M}$ and $\beta \in \Lambda^{ev}T^*N \ten \F_N$.

We have that 
\begin{align*}
\lefteqn{(\Sigma_{M\times N}^{-1}\circ \Theta \circ \Psi)\bigl((\Phi_M(\alpha_1), \Phi_M(\alpha_2)) \wedge (\beta+\tau_N\beta)\bigr)}\\
&=\frac{1}{\sqrt 2}(\Sigma_{M\times N}^{-1}\circ \Theta) \bigl((dx_1 \wedge \alpha_1 + \tau_M(dx_1 \wedge \alpha_1)-i \alpha_2 + i\tau_M(\alpha_2))\wedge (\beta+\tau_N\beta)\bigr) \ .
\end{align*}

Assume now that $\alpha_1,\alpha_2 \in \Lambda^{ev}T^*\ra M\ten \F_{\ra M}$. Then the previous expression equals
$$\frac{1}{\sqrt 2}\Sigma_{M\times N}^{-1}\bigl((dx_1 \wedge \alpha_1 + \tau_M(dx_1 \wedge \alpha_1) - i\alpha_2 + i\tau_M(\alpha_2))\wedge (\beta - \tau_N\beta)\bigr)$$
$$=-(\tau_{\ra M}\alpha_1+\alpha_1)\wedge\tau_N\beta + i(\tau_{\ra M}\alpha_2-\alpha_2)\wedge \beta \ . $$

If $\alpha_1,\alpha_2 \in \Lambda^{od}T^*\ra M\ten \F_{\ra M}$, then it equals
$$\frac{1}{\sqrt 2}\Sigma_{M\times N}^{-1}\bigl((dx_1 \wedge \alpha_1 + \tau_M(dx_1 \wedge \alpha_1) -i\alpha_2 + i\tau_M(\alpha_2))\wedge (\beta+\tau_N\beta)\bigr)$$
$$=(\alpha_1+\tau_{\ra M}\alpha_1)\wedge \beta + i(\tau_{\ra M}\alpha_2-\alpha_2)\wedge \tau_N\beta \ .$$ 

Thus the image of $(\alpha,-i\tau_{\ra M}\alpha)\wedge (\beta+\tau_N\beta)$ under ${\Sigma_{M\times N}^{-1}\circ \Theta \circ \Psi \circ (\Phi_M\oplus \Phi_M)}$ equals $-2\tau_{\ra M}\alpha\wedge\tau_N\beta$ if $\alpha \in \Lambda^{ev}T^*\ra M\ten \F_{\ra M}$, and $2 \alpha \wedge \beta$ if $\alpha \in \Lambda^{od}T^*\ra M\ten \F_{\ra M}$.  

The image of $(\alpha,i\tau_{\ra M}\alpha)\wedge (\beta+\tau_N\beta)$ equals $-2\alpha\wedge\tau_N\beta$ if $\alpha \in \Lambda^{ev}T^*\ra M\ten \F_{\ra M}$, and $2\tau_{\ra M}\alpha \wedge \beta$ if $\alpha \in \Lambda^{od}T^*\ra M\ten \F_{\ra M}$. 

The first part of Sublemma \ref{sublevod} follows.

We define $v_1(\alpha,\beta)$ as the image of $(\alpha, -i\alpha)\wedge (\beta +\tau_N\beta)$ under ${\Sigma_{M\times N}^{-1}\circ \Theta \circ \Psi \circ (\Phi_M\oplus \Phi_M)}$, and $v_2(\alpha,\beta)$ as the image of $(\alpha,i\alpha)\wedge (\beta +\tau_N\beta)$.

The space $\Sigma_{M\times N}^{-1}\tilde W_M^+$ is spanned by the set 
$$\{v_1(\alpha,\beta), ~v_2(\tau_{\ra M}\alpha,\beta)~|~\alpha \in \Omega^{<}_{\F_{M}},~\beta \in \Omega_{(2)}^{ev}(N,\F_N)\} \ .$$ For $\Sigma_{M\times N}^{-1}\tilde W_M^-$ an analogous statement holds with $>$ instead of $<$.

If $\alpha \in \Lambda^{ev}T^*\ra M\ten \F_{\ra M}$, then 
\begin{align*}
v_1(\alpha,\beta)&=-(\tau_{\ra M}\alpha+\alpha)\wedge\tau_N\beta -(\alpha -\tau_{\ra M}\alpha)\wedge \beta\\ 
v_2(\tau_{\ra M}\alpha,\beta)&=(\tau_{\ra M}\alpha+\alpha)\wedge \beta +(\tau_{\ra M}\alpha-\alpha)\wedge \tau_N\beta \ .
\end{align*} 

If $\alpha \in \Lambda^{od}T^*\ra M\ten \F_{\ra M}$, then 
\begin{align*}
v_1(\alpha,\beta)&=(\alpha +\tau_{\ra M}\alpha)\wedge \beta - (\alpha- \tau_{\ra M}\alpha)\wedge \tau_N\beta\\
v_2(\tau_{\ra M}\alpha,\beta)&=-(\alpha+\tau_{\ra M}\alpha)\wedge\tau_N\beta + (\tau_{\ra M}\alpha -\alpha)\wedge \beta \ .
\end{align*}  

Using these equations one checks that $\tilde W_M^{\pm}$ is the positive resp. negative eigenspace of the involution $\alpha_M\tau_{\ra M} \ten \Gamma_N\tau_N$. Thus we get the third equation. From eq. \ref{signprod} it follows that $\D_{\ra M \times N}^{sign}$ commutes with $\tilde \alpha_M$. 
\end{proof}  

We set $\inv:= \tilde \alpha_M$. By the Sublemma $\sigma_{\inv}(M \times N,\F_M\boxtimes \F_N)$ is well-defined. 

The involutions $\alpha_{M\times N}$ and $\tilde \alpha_M$ restrict to involutions on $\tilde W_M \cap W_{M\times N}$. By Lemma \ref{dirsumgen} we can apply Prop. \ref{propodd}, which yields
$$\sigma^L(M \times N,\F_M\boxtimes \F_N)=\sigma_{\inv}(M \times N,\F_M\boxtimes \F_N) \ .$$
 
The canonical symmetric trivializing operator of $B(d_M^{sign})$ with respect to $\alpha_M$ is $A_M=i\Phi_M (\alpha_M \tau_{\ra M})\Phi_M^{-1}$. Then $\hat A_M=\Psi(i \Gamma_1(\Phi_M \alpha_M \tau_{\ra M}\Phi_M^{-1}))\Psi^{-1}$.
Since $\hat A_M$ commutes with $\Psi(\Gamma_2(\Phi_M\alpha_M\Phi_M^{-1}))\Psi^{-1}$, the operator $\Theta \hat A_M \Theta^{-1}$ is a symmetric trivializing operator for $\tilde \alpha_M$. We get that
\begin{align*}
[\D_M^{sign}(A_M)] \ten [\D_N] &= [(\D_M^{sign} + \tau_M\D_N^{sign})^{cyl}(\hat A_M)]\\
&=[\D_{M\times N}^{sign,cyl}(\Theta\hat A_M\Theta^{-1})]\\
&=\sigma_{\inv}(M\times N,\F_M \boxtimes \F_N) \ .
\end{align*}
\end{proof}

\subsection{$M$ is odd-dimensional and $N$ is even-dimensional}

We require that Assumptions \ref{assodd} and \ref{assunnec} hold for the de Rham operator on $\Omega^*(\ra M,\F_{\ra M})$. The de Rham operator on $\Omega^*(\ra M \times N,\F_{\ra M}  \boxtimes \F_N)$ is only required to fulfill Assumption \ref{assodd}. 

The module $\cH_{\ra M \times N}$ decomposes into a direct sum of the projective $\A \ten \B$-modules $$\cH_{\ra M\times N}^{k,l}:=\cH_{\ra M\times N} \cap (\Omega^k(\ra M,\F_{\ra M})\ten \Omega^l(N,\F_N)) \ .$$
The module $\cH_{\ra M\times N}^{k,l}$ is only nontrivial if $k+l=(\dim \ra M + \dim N)/2$.

Let $k=(\dim \ra M)/2,~ l=(\dim \ra N)/2$. Then 
$$\cH_{\ra M\times N}^{k,l} \cong \cH_{\ra M} \ten (\Ker \Delta_N \cap \Omega^{l}(N,\F_N)) \ .$$ Thus any Lagrangian $L \in \cH_{\ra M}$ defines a Lagrangian in $\cH_{\ra M \times N}^{k,l}$. From this and Lemma \ref{hprop} (1) it follows that the involution $\alpha_M^L \ten \Gamma_N$
 restricts to an involution on $\cH_{\ra M \times N}$.

Define the Lagrangian 
$L_{\ten} \subset \cH_{\ra M\times N}$ as the positive eigenspace of the involution $\alpha_M^L \ten \Gamma_N$ restricted to $\cH_{\ra M\times N}$.
(Note that the existence of this Lagrangian implies that Assumption \ref{assunnec} is fulfilled.)

By construction 
$$\alpha_{M\times N}^{L_{\ten}}|_{\cH_{\ra M\times N}}=\alpha_M^L \ten \Gamma_N|_{\cH_{\ra M\times N}} \ .$$

\begin{prop}
\label{prododev}
It holds that
$$\sigma^L(M,\F_M) \ten \sigma(N,\F_N)=\sigma^{L_{\ten}}(M \times N,\F_M \boxtimes \F_N) \in K_1(\A \ten \B) \ .$$
\end{prop}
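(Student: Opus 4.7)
The plan is to imitate the structure of the proofs of Theorem \ref{prodev} and Proposition \ref{prodevod}, handling first the closed case and then transferring the signature boundary condition through the odd-times-even isometries of \S \ref{oddev}. For closed $M$, the odd-times-even Kasparov product formula from \S \ref{produnbound} gives
$$[\D_M^{sign}]\ten[\D_N^{sign}]=[\tau_N\D_M^{sign}+\D_N^{sign}]\in KK_1(\bbbc,\A\ten\B) \ ,$$
and I would construct an isometric isomorphism $\Theta$ from $\Omega_{(2)}^+(M,\F_M)\ten\Omega^*_{(2)}(N,\F_N)$ to $\Omega^+_{(2)}(M\times N,\F_M\boxtimes\F_N)$, analogous to the $\Theta$ of Proposition \ref{prodevod}, intertwining $\tau_N\D_M^{sign}+\D_N^{sign}$ with the product signature operator $\D_{M\times N}^{sign}$ built via the rule $\dira_{M\times N}=\gradu_N\dira_M+\dira_N$ of \S \ref{oddev}.

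For $M$ with boundary I would then apply Theorem \ref{prod} in its odd-times-even form (\S \ref{oddev}) to the canonical symmetric trivializing operator $A^L:=\Sigma_M\alpha_M^L\Gamma_{\ra M}\Sigma_M^{-1}$ for the odd signature operator on $M$, and use Proposition \ref{eqAPScyl} to pass between the cylindric and APS classes. This yields
$$\sigma^L(M,\F_M)\ten\sigma(N,\F_N)=[\D_{M\times N}^{sign,cyl}(\Theta\hat A^L\Theta^{-1})] \ ,$$
with $\hat A^L$ the induced trivializing operator on the boundary of $M\times N$; the remaining task is to identify this class with $\sigma^{L_\ten}(M\times N,\F_M\boxtimes\F_N)$.

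For that identification I would transfer $\alpha_M^L$ to an involution $\tilde\alpha_M^L$ on $\tilde W_M:=W_M\ten\Omega^*_{(2)}(N,\F_N)$ via the composition of $\Sigma_M$, the $\Psi$ of \S \ref{oddev} and $\Theta$, and prove a sublemma analogous to Sublemma \ref{sublevod}: after these identifications $\tilde\alpha_M^L$ coincides (up to a sign coming from the $\Sigma$'s) with $\alpha_M^L\ten\Gamma_N$, and has the correct commutation relations with $\tau_{\ra M\times N}$, $\Gamma_{\ra M\times N}$ and $\D^{sign}_{\ra M\times N}$. Combined with Lemma \ref{dirsumgen}, the sublemma makes $\Theta\hat A^L\Theta^{-1}$ a symmetric trivializing operator with respect to $\tilde\alpha_M^L$, so $[\D_{M\times N}^{sign,cyl}(\Theta\hat A^L\Theta^{-1})]=\sigma_{\tilde\alpha_M^L}(M\times N,\F_M\boxtimes\F_N)$. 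Finally I would apply Proposition \ref{propodd} with $\inv_0=\alpha_{M\times N}^{L_\ten}$ and $\inv_1=\tilde\alpha_M^L$: Lemma \ref{hprop} supplies the decomposition compatibility, and by the very definition of $L_\ten$ the two involutions agree on $\cH_{\ra M\times N}$, so on the full overlap $\tilde W_M\cap W_{M\times N}$ the product $\tilde\alpha_M^L\alpha_{M\times N}^{L_\ten}$ squares to the identity and has spectrum in $\{-1,1\}\ne S^1$.

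\textbf{Main obstacle.} The hardest step will be the sublemma: tracking how the chirality $\tau_{\ra M}$, the parity grading $\Gamma_{\ra M}$ and the $\Sigma_M$-identification with $\Lambda^+T^*M$ on the $M$ side interact with the even chirality $\tau_N$ on the $N$ side when carried through the three isometries $\Sigma_M$, $\Psi$ and $\Theta$, and then checking that, restricted to $\tilde W_M\cap W_{M\times N}$, the resulting involution agrees with $\alpha_{M\times N}^{L_\ten}$ not only on the harmonic piece $\cH_{\ra M\times N}$ (where this is tautologous from the definition of $L_\ten$) but also on the non-harmonic piece, where both operators are built from the $<$ vs $>$ degree splitting but by different routes. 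A secondary subtlety is that $\cH_{\ra M\times N}$, while sitting in $W_M\ten\Omega^*_{(2)}(N,\F_N)$ by Lemma \ref{hprop}~(1), is in general strictly larger than $\cH_{\ra M}\ten\Ker\Delta_N$, so the spectrum condition required by Proposition \ref{propodd} must be verified on the whole overlap rather than just on the tensor-product Lagrangian.
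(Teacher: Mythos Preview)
Your proposal is correct and follows essentially the same route as the paper: the closed case via the odd-times-even Kasparov product and an isometry $\Theta$, then the transfer of $\alpha_M^L$ to $\tilde\alpha_M^L=\alpha_M^L\ten\Gamma_N$ (the paper uses the composition $\Sigma_{M\times N}^{-1}\circ\Theta\circ\Sigma_M$; note that in \S\ref{oddev} there is no $\Psi$, since $\hat A=A\ten 1$ directly), followed by Proposition~\ref{propodd}. One small addition worth making explicit in your sublemma: the paper records that $\tilde\alpha_M^L$ \emph{commutes} with $\alpha_{M\times N}^{L_\ten}$, which is precisely what forces $(\tilde\alpha_M^L\alpha_{M\times N}^{L_\ten})^2=1$ on $\tilde W_M\cap W_{M\times N}$ and hence the spectrum condition you need.
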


\begin{proof}
We have that $\tau_{M \times N}=\tau_M \tau_N$.

First assume that $M$ is closed. 
 
By \S \ref{produnbound}
$$[\tau_N\D_M^{sign} + \D_N^{sign}]=[\D_M^{sign}] \ten [\D_N^{sign}] \in KK_1(\bbbc,\A \ten \B) \ .$$
The operator $\tau_N\D_M^{sign} + \D_N^{sign}$ acts on $\Omega_{(2)}^+(M,\F_M) \ten \Omega_{(2)}^*(N,\F_N)$. 

Let
$$\Theta: \Omega_{(2)}^+(M,\F_M) \ten \Omega_{(2)}^*(N,\F_N)\to \Omega_{(2)}^+(M\times N,\F_M\boxtimes \F_N)$$
be the isomorphism that equals $\Gamma_M$ from  $\Omega^+(M,\F_M) \ten \Omega^-(N,\F_N)$ to $\Omega^-(M,\F_M) \ten \Omega^-(N,\F_N)$ and the identity on $\Omega^+(M,\F_M) \ten \Omega^+(N,\F_N)$.
Note that $\Gamma_M \D^{sign}_N=\Theta \D_N^{sign} \Theta^{-1}$.

The signature operator on $\Omega_{(2)}^+(M\times N,\F_M \boxtimes \F_N)$ fulfills
$$\D^{sign}_{M \times N}=\tau_M\Theta \D^{sign}_M \Theta^{-1} + \Gamma_M \D^{sign}_N=\Theta (\tau_N\D_M^{sign} + \D_N^{sign})\Theta^{-1}\ . $$

Thus
$$[\tau_N\D_M^{sign} + \D_N^{sign}]=[\D^{sign}_{M \times N}] \ .$$

Now let $M$ be a manifold with boundary.

Define the involution 
$$\tilde \alpha_M^{L}=(\Sigma_{M\times N}^{-1}\circ\Theta \circ \Sigma_M)(\alpha_M^L\ten \tau_N)(\Sigma_{M\times N}^{-1}\circ\Theta\circ \Sigma_M)^{-1}$$
on $$\tilde W_M=(\Sigma_{M\times N}^{-1}\circ\Theta\circ \Sigma_M)(W_M \ten \Omega_{(2)}^*(N,\F_N))$$
and set 
$$\tilde V_M=(\Sigma_{M\times N}^{-1}\ten \Theta \circ \Sigma_M)(V_M \ten \Omega_{(2)}^*(N,\F_N)) \ .$$
Furthermore let $\tilde W_M^{\pm}$ be the positive resp. negative eigenspace of $\tilde \alpha_M^L$.

Compare the following sublemma with Sublemma \ref{sublevod}.
\begin{sublem}
\begin{enumerate}
\item It holds that 
\begin{align*}
\tilde V_M&=V_M \ten \Omega_{(2)}^*(N,\F_N) \\
\tilde W_M&=W_M \ten \Omega_{(2)}^*(N,\F_N) 
\end{align*}
and $$\tilde \alpha_M^L= \alpha_M^L \ten \Gamma_N \ .$$
\item The operator $\tilde \alpha_M^L$ commutes with $\Gamma_{\ra M\times N},\alpha_{M\times N}^{L_{\ten}}$ and $\D^{sign}_{\ra M \times N}$  and anticommutes with $\tau_{\ra M\times N}$.
\end{enumerate} 
\end{sublem}
 
\begin{proof}

For $\alpha \in \Omega^{ev}(\ra M,\F_M),~\beta \in \Omega^*(N,\F_N)$
\begin{align*}
(\Sigma_{M\times N}^{-1}\circ \Theta)(\Sigma_M(\alpha) \wedge (\beta \pm \tau_N\beta))&=\frac{1}{\sqrt 2} (\Sigma_{M\times N}^{-1}\circ \Theta)((\alpha+ \tau_M \alpha)\wedge (\beta \pm \tau_N \beta))\\
&=\frac{1}{\sqrt 2}\Sigma_{M\times N}^{-1}((\alpha \pm \tau_M \alpha)\wedge (\beta \pm\tau_N \beta)) \ .
\end{align*}

For $\alpha \in \Omega^{od}(\ra M,\F_M),~\beta \in \Omega^*(N,\F_N)$
\begin{align*}
(\Sigma_{M\times N}^{-1}\circ \Theta)(\Sigma_M(\alpha) \wedge (\beta \pm \tau_N\beta))&=\frac{1}{\sqrt 2} (\Sigma_{M\times N}^{-1}\circ \Theta)((dx_1 \wedge \alpha - i\tau_{\ra M} \alpha)\wedge (\beta \pm \tau_N \beta))\\
&=\frac{1}{\sqrt 2}\Sigma_{M\times N}^{-1}((dx_1 \wedge \alpha \mp i \tau_{\ra M}\alpha)\wedge (\beta \pm \tau_N \beta)) \ .
\end{align*}
In both cases this equals $\alpha \wedge (\beta \pm \tau_N\beta)$ if $\beta \in \Omega^{ev}(N,\F_N)$ and
$\pm i\tau_{\ra M}\alpha \wedge (\beta \pm \tau_N\beta)$ if $\beta \in \Omega^{od}(N,\F_N)$. (These statements hold true if we choose the sign above resp. below everywhere.)

Thus
\begin{align*}
\tilde W_M^+&=(\Omega^{<}_{\F_{M}}\oplus L)\ten \Omega_{(2)}^{ev}(N,\F_N) \oplus  (\Omega^{>}_{\F_{M}}\oplus L^{\perp})\ten \Omega_{(2)}^{od}(N,\F_N)\\
\tilde W_M^-&=(\Omega^{<}_{\F_{M}}\oplus L)\ten \Omega_{(2)}^{od}(N,\F_N) \oplus  (\Omega^{>}_{\F_{M}}\oplus L^{\perp})\ten \Omega_{(2)}^{ev}(N,\F_N) \ .
\end{align*}

It follows that $\tilde W_M^{\pm}$ is the positive resp. negative eigenspace of the involution $\alpha_M^L\ten \Gamma_N$. Eq. \ref{signprod} implies that the involution commutes with the signature operator on the boundary $\D_{\ra M \times N}^{sign}$. It clearly commutes with $\alpha_{M\times N}^{L_{\ten}}$ and anticommutes with $\tau_{\ra M \times N}=\tau_{\ra M}\tau_N$.
\end{proof}

We write $\inv=\tilde \alpha_M^L$. By the Sublemma $\sigma_{\inv}(M\times N,\F_M\boxtimes \F_N)$ is well-defined. Using Lemma \ref{dirsumgen} and Lemma \ref{hprop} one checks that $\alpha_{M\times N}$ and $\tilde \alpha_M^L$ restrict to involutions on $\tilde W_M \cap W_{M\times N}$. By Prop. \ref{propodd} 
$$\sigma_{\inv}(M\times N,\F_M\boxtimes \F_N)=\sigma(M\times N,\F_M\boxtimes \F_N) \ .$$

Let $A_M$ be the canonical symmetric trivializing operator for $B(d_M^{sign})$ with respect to $\alpha_M^L$. Then $\hat A_M=\Sigma_M(\alpha_M^L \Gamma_{\ra M})\Sigma_M^{-1}$. From the calculations in the proof of the Sublemma it also follows that 
$$(\Sigma_{M\times N}^{-1}\ten \Theta \circ \Sigma_M)\Gamma_{\ra M}\tau_N (\Sigma_{M\times N}^{-1}\ten \Theta \circ \Sigma_M)^{-1}=\Gamma_{\ra M}\tau_N \ .$$
Hence
$$(\Sigma_{M\times N}^{-1}\circ \Theta)\hat A_M (\Sigma_{M\times N}^{-1}\circ \Theta)^{-1}=\tilde \alpha_M^L\Gamma_{\ra M}\tau_N \ .$$ Since $\tilde \alpha_M^L\Gamma_{\ra M}\tau_N$ anticommutes with $\D^{sign}_{\ra M \times N}$ and commutes with $\tilde \alpha_M^L$, the operator $\Theta\hat A_M \Theta^{-1}$ is a symmetric trivializing operator of $B(d_{M\times N}^{sign})$ with respect to $\inv=\tilde \alpha_M^L$.

Thus
$$\sigma_{\inv}(M\times N,\F_M\boxtimes \F_N)=[\D^{sign}_{M \times N}(\tau_N\Theta\hat A_M \Theta^{-1})] \ .$$

Arguing as in the closed case we have that 
\begin{align*}
\sigma^L(M,\F_M) \ten \sigma(N,\F_N)&=[\D_M^{sign}(A_M)] \ten [\D_N^{sign}]\\
&=[(\tau_N\D_M^{sign}+\D_N^{sign})^{cyl}(\hat A_M)]\\
&=[(\D_{M \times N}^{sign})^{cyl}(\Theta \hat A_M\Theta^{-1})] \ .
\end{align*}
\end{proof}

\subsection{$M,N$ are odd-dimensional}

Let Assumptions \ref{assodd} and \ref{assunnec} hold for the de Rham operator on $\Omega^*(\ra M,\F_{\ra M})$ and Assumption \ref{ass} for the de Rham operator on $\Omega^*(\ra M \times N,\F_{\ra M}  \boxtimes \F_N)$. Let $L \subset \cH_{\ra M}$ be a Lagrangian.

\begin{prop}
\label{prododod}

It holds that
$$2\sigma^L(M,\F_M) \ten \sigma(N,\F_N)=\sigma(M \times N,\F_M \boxtimes \F_N) \in K_0(\A \ten \B) \ .$$
\end{prop}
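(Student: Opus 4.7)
The plan is to follow the template of Prop \ref{prodevod} and Prop \ref{prododev}: first dispose of the closed case ($\ra M = \emptyset$), and then extend to manifolds with boundary by combining Theorem \ref{prod} (in the odd-odd version of \S\ref{oddodd}), Prop \ref{eqAPScyl}, and an involution-change argument via Prop \ref{propev}. The factor $2$ on the left hand side reflects the doubling $\E_M \mapsto \E_M \oplus \E_M$ built into the odd-odd product Dirac construction of \S\ref{oddodd}, which makes the Kasparov-product Hilbert space only half the size of the signature Hilbert space $\Omega^*_{(2)}(M \times N,\F_M \boxtimes \F_N)$ supporting $\sigma(M\times N,\F_M\boxtimes\F_N)$.

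For the closed case I would invoke the odd-odd Kasparov product description of \S\ref{produnbound} together with eq. \ref{signprod} to represent
$$\sigma(M,\F_M) \ten \sigma(N,\F_N) = [\Gamma_1 \D^{sign}_M + \Gamma_2 \D^{sign}_N],$$
acting on $\Omega^+_{(2)}(M,\F_M) \ten \Omega^+_{(2)}(N,\F_N) \ten \bbbc^2$ with grading $\Gamma_3 = -i\Gamma_1\Gamma_2$. Using the $\tau_M$-eigenspace decomposition $\Omega^*_{(2)}(M,\F_M) = \Omega^+_{(2)}(M,\F_M)\oplus \Omega^-_{(2)}(M,\F_M)$ together with the canonical isomorphism $\Gamma_M:\Omega^-_{(2)}(M,\F_M)\to \Omega^+_{(2)}(M,\F_M)$, and analogously for $N$, I would identify
$$\Omega^*_{(2)}(M\times N,\F_M\boxtimes\F_N) \cong \Omega^+_{(2)}(M,\F_M) \ten \Omega^+_{(2)}(N,\F_N) \ten \bbbc^2 \ten \bbbc^2$$
and check that $d_{M\times N}+d_{M\times N}^* = (d_M+d_M^*) + \Gamma_M(d_N+d_N^*)$ becomes block diagonal with respect to the second $\bbbc^2$-factor. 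Each block should be unitarily equivalent to the Kasparov-product representative $\Gamma_1\D^{sign}_M + \Gamma_2\D^{sign}_N$ via a base change on $\bbbc^2$ akin to the map $U\Gamma_2 U^* = \Gamma_3$ appearing in \S\ref{produnbound}. The two blocks, once matched up, give $\sigma(M\times N,\F_M\boxtimes\F_N) = 2\,\sigma(M,\F_M)\ten\sigma(N,\F_N)$ in the closed case.

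For $M$ with boundary, I would set $A_M := \Sigma_M \alpha_M^L \Gamma_{\ra M}\Sigma_M^{-1}$ (the canonical symmetric trivializing operator for $B(d_M^{sign})$ with respect to $\alpha_M^L$) and $\hat A_M := \Psi(A_M \ten 1)\Psi^{-1}$, so that Theorem \ref{prod} in the form of \S\ref{oddodd} gives
$$\sigma^L(M,\F_M) \ten \sigma(N,\F_N) = [\D_{M\times N}(\hat A_M)].$$
Next, I would transport $\alpha_M^L$ via the closed-case identification to an involution $\tilde \alpha_M^L$ on an appropriate subspace $\tilde W_M \subset \Omega^*_{(2)}(\ra M \times N,\F_{\ra M}\boxtimes\F_N)$, mirroring the construction of Sublemma \ref{sublevod} in Prop \ref{prodevod}. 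Lemma \ref{dirsumgen}, Lemma \ref{hprop}, and Prop \ref{propev} would then identify $\sigma_{\tilde\alpha_M^L}(M\times N,\F_M\boxtimes\F_N) = \sigma(M\times N,\F_M\boxtimes\F_N)$. Finally, combining Prop \ref{eqAPScyl} with the doubling isomorphism from the closed case and a symmetrized-product argument as in the proof of Theorem \ref{prodev}, I would conclude $\sigma_{\tilde\alpha_M^L}(M\times N,\F_M\boxtimes\F_N) = 2\,[\D_{M\times N}(\hat A_M)]$, which gives the claim.

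The main obstacle will be the doubling identification in the closed case: tracking the degree-dependent $i$-phases relating $\tau_{M\times N}$ to $\tau_M \ten \tau_N$, and verifying that the splitting of $d_{M\times N}+d_{M\times N}^*$ into two Kasparov-product blocks is compatible with both the grading $\tau_{M\times N}$ and the Kasparov-product grading $\Gamma_3$. Once this bookkeeping is in place, the passage to the boundary case is a routine adaptation of the arguments in the proofs of Prop \ref{prodevod} and Prop \ref{prododev}.
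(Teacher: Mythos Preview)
Your overall strategy is correct and mirrors the paper: closed case first via the odd-odd Kasparov product and a doubling, then boundary case by transporting $\alpha_M^L$ and comparing involutions via Prop.~\ref{propev} (using Lemmas \ref{dirsumgen} and \ref{hprop}). Two points of the implementation differ from the paper, and the first one is where your proposal is too loose.

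In the closed case, the paper does not block-diagonalize $d_{M\times N}+d^*_{M\times N}$ over a second $\bbbc^2$ and then fix things by a ``base change on $\bbbc^2$''. Instead it absorbs the Kasparov $\bbbc^2$ into $\Omega^*_{(2)}(M,\F_M)$ (so $\Gamma_1=\tau_M$, $\Gamma_2=i\tau_M\Gamma_M$, grading $\Gamma_M$), doubles via $\Theta(\omega_1,\omega_2)=\omega_1+\Gamma_N\omega_2$ to land on all of $\Omega^*_{(2)}(M\times N,\F_M\boxtimes\F_N)$, and then introduces an explicit unitary ${\mathcal Z}(\alpha\wedge\beta)=\tfrac{1}{\sqrt 2}(\alpha\wedge\beta+\tau_{M\times N}(\Gamma_M\alpha\wedge\beta))$ which simultaneously conjugates the doubled operator $\D^{sign}_M-\tau_{M\times N}\D^{sign}_N$ to $d^{sign}_{M\times N}$ and the grading $\Gamma_M$ to $\tau_{M\times N}$. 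Your diagonal base change on $\bbbc^2$ cannot do both at once: with your identifications $\tau_{M\times N}$ becomes $\sigma_y\otimes\sigma_z$, and a unitary that fixes $\sigma_z$ and sends $\sigma_x\mapsto\Gamma_2$ necessarily sends $\sigma_y\mapsto-\Gamma_3$, so one of the two blocks would carry the opposite grading and the sum would cancel rather than double. The paper's ${\mathcal Z}$ is the correct (non-diagonal) fix, and having it explicit is what makes the boundary case go through.

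In the boundary case the paper does \emph{not} invoke a symmetrized-product argument. It transports $\alpha_M^L$ through $\Phi_{M\times N}^{-1}\circ{\mathcal Z}\circ\bar\Psi$ (with $\bar\Psi$ built from the doubled $\Psi\circ\Sigma_M$) and computes in a sublemma analogous to Sublemma~\ref{sublevev} (not \ref{sublevod}) that the result is simply $\tilde\alpha_M^L=-\alpha_M^L$. The transported trivializing operator turns out to be $\Phi_{M\times N}\tilde\alpha_M^L\Phi_{M\times N}^{-1}=A_{\inv^{opp}}$ for $\inv=\tilde\alpha_M^L$, so one lands on $\sigma_{\inv^{opp}}$; the passage to $\sigma_{\inv}$ is via eq.~\eqref{eqinvinvopp}, after which Prop.~\ref{propev} gives $\sigma(M\times N,\F_M\boxtimes\F_N)$.
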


\begin{proof}
We have that $$\tau_{M\times N}=-i\tau_M\Gamma_M\tau_N \ .$$

First let $M$ be closed. In the following we will denote by $\D_M^{sign}|_X$ the closure of $d_M + \tau_M d_M \tau_M$ acting on $X \subset \Omega_{(2)}^*(M,\F_M)$. Without specification $\D_M^{sign}$ is understood to act on the space $\Omega_{(2)}^+(M,\F_M)$, as before. The same applies to $\D_N^{sign}$.  

We may identify $\Omega_{(2)}^+(M,\F_M) \oplus \Omega_{(2)}^+(M,\F_M)$ with $\Omega_{(2)}^*(M,\F_M)$ by applying the isomorphism $\Gamma_M:\Omega_{(2)}^+(M,\F_M) \to \Omega_{(2)}^-(M,\F_M)$ to the second summand. 

From \S \ref{oddodd} we get that $\Gamma_1=\tau_M, \Gamma_2=i\tau_M\Gamma_M$ and then from \S \ref{produnbound}
$$[\D_M^{sign}] \ten [\D_N^{sign}]=[(\D^{sign}_M + i \tau_M\Gamma_M\D_N^{sign})|_{\Omega^*_{(2)}(M,\F_M) \ten \Omega_{(2)}^+(N,\F_N)}] \ .$$
Here the grading operator on $\Omega^*_{(2)}(M,\F_M) \ten \Omega_{(2)}^+(N,\F_N)$ is $-i\Gamma_1\Gamma_2=\Gamma_M$.

Define the isometric isomorphism
$$\Theta:\bigl(\Omega^*_{(2)}(M,\F_M) \ten \Omega_{(2)}^+(N,\F_N)\bigr)^2 \to \Omega_{(2)}^*(M\times N, \F_M \boxtimes \F_N) \ ,$$ 
$$\Theta(\omega_1,\omega_2)=\omega_1 + \Gamma_N\omega_2 \ .$$
On $\Omega_{(2)}^*(M\times N, \F_M \boxtimes \F_N)$ 
\begin{align*}
\Theta(\D^{sign}_M + i \tau_M\Gamma_M\D_N^{sign})\Theta^{-1}&=\D^{sign}_M + i \tau_M\Gamma_M\tau_N\D_N^{sign} \\
&=\D^{sign}_M -  \tau_{M\times N}\D_N^{sign} \ .
\end{align*}  
Hence
$$2[\D_M^{sign}] \ten [\D_N^{sign}]=[(\D^{sign}_M -  \tau_{M\times N}\D_N^{sign})|_{\Omega_{(2)}^*(M\times N, \F_M \boxtimes \F_N)}] \ .$$

In order to compare the latter class with the signature class we define a unitary operator
${\mathcal Z}$ on $\Omega_{(2)}^*(M\times N, \F_M \boxtimes \F_N)$ by 
$${\mathcal Z}(\alpha \wedge \beta)=\frac{1}{\sqrt 2}(\alpha \wedge \beta + \tau_{M\times N}(\Gamma_M\alpha \wedge \beta))$$ for $\alpha \in \Omega^*_{(2)}(M,\F_M),~\beta \in \Omega^*_{(2)}(N,\F_N)$.
Then
$${\mathcal Z}\Gamma_M{\mathcal Z}^{-1}=\tau_{M\times N} \ .$$ 
Furthermore for $\alpha \in \Omega^{ev}(M,\F_M), \beta \in \Omega^*(N,\F_N)$
\begin{align*}
\lefteqn{\frac{1}{\sqrt 2}{\mathcal Z}(d^{sign}_M -  \tau_{M\times N}d_N^{sign}){\mathcal Z}^{-1}(\alpha \wedge \beta + \tau_{M \times N}(\alpha \wedge \beta))}\\
&= {\mathcal Z}\bigl((d^{sign}_M - \tau_{M\times N}d_N^{sign})(\alpha \wedge \beta)\bigr)\\
&=\frac{1}{\sqrt 2}\bigl(d^{sign}_M \alpha \wedge \beta - \tau_{M\times N}(d_M^{sign}\alpha \wedge \beta)+ d_N^{sign}(\alpha \wedge \beta)-  \tau_{M\times N}d_N^{sign}(\alpha \wedge \beta)\bigr)\\
&=\frac{1}{\sqrt 2}\bigl(d^{sign}_M \alpha \wedge \beta - i d_M^{sign}(\tau_M\alpha \wedge \tau_N\beta) + d_N^{sign}(\alpha \wedge \beta) + i \tau_M\alpha \wedge d_N^{sign}\tau_N\beta\bigr)\\
&=\frac{1}{\sqrt 2}(d_M^{sign} + \Gamma_M d_N^{sign})\bigl(\alpha \wedge \beta+ \tau_{M \times N}(\alpha \wedge \beta)\bigr)\\
&=\frac{1}{\sqrt 2}d_{M\times N}^{sign}(\alpha \wedge \beta+ \tau_{M \times N}(\alpha \wedge \beta)) \ .
\end{align*}

The last equation follows from eq. \ref{signprod}. It follows that $$d_{M\times N}^{sign}={\mathcal Z}(d^{sign}_M -  \tau_{M\times N}d_N^{sign}){\mathcal Z}^{-1}$$ as an operator from $\Omega^+(M\times N, \F_M \boxtimes \F_N)$ to $\Omega^-(M\times N, \F_M \boxtimes \F_N)$. Since both sides of the equation are essentially selfadjoint, the equation holds on $\Omega^*(M\times N, \F_M \boxtimes \F_N)$. Hence in the closed case
$$2[\D_M^{sign}] \ten [\D_N^{sign}]=[\D_{M\times N}^{sign}] \in K_0(\A \ten \B) \ .$$

Now let $M$ be a manifold with boundary. The isomorphism $\Psi$ defined in \S \ref{oddodd} is here a map from $(\Lambda^+T^*M\ten \F_M)|_{\ra M} \boxtimes (\Lambda^+T^*N\ten \F_N)$ to $(\Lambda^{ev}T^*M\ten \F_M)|_{\ra M} \boxtimes (\Lambda^+T^*N\ten \F_N)$ given by 
$$\Psi(\omega)=\frac{1}{\sqrt 2}(\omega +\Gamma_M\omega) \ .$$

We define the isomorphism $\ov{\Psi}$ from $\bigl((\Lambda^*T^*\ra M \ten \F_{\ra M})\boxtimes (\Lambda^+T^*N \ten\F_N)\bigr)^2$  to $\bigl(\Lambda^*T^*(M\times N)\ten (\F_M\boxtimes \F_N)\bigr)|_{\ra M\times N}$ by 
\begin{align*}
\ov{\Psi}(\omega_1,\omega_2)&=\Theta\bigl((\Psi\circ\Sigma_M)(\omega_1),(\Psi\circ \Sigma_M)(\omega_2)\bigr) \\
&=\frac{1}{\sqrt 2}\bigl(\Sigma_M(\omega_1)+\Gamma_M\Sigma_M(\omega_1)+ \Gamma_N\Sigma_M(\omega_2)+ \Gamma_{M\times N}\Sigma_M(\omega_2)\bigr) \ .
\end{align*}
We set 
$$\tilde W_M=(\Phi_{M\times N}^{-1} \circ {\mathcal Z}\circ \ov{\Psi})\bigl((W_M \oplus W_M) \ten \Omega_{(2)}^+(N,\F_N)\bigr) \ ,$$ 
$$\tilde V_M=(\Phi_{M\times N}^{-1} \circ {\mathcal Z}\circ \ov{\Psi})\bigl((V_M \oplus V_M)\ten \Omega_{(2)}^+(N,\F_N)\bigr)$$ 
and
$$\tilde \alpha_M^L=(\Phi_{M\times N}^{-1} \circ {\mathcal Z}\circ \ov{\Psi})(\alpha_M^L\Gamma_{\ra M} \oplus \alpha_M^{L}\Gamma_{\ra M})(\Phi_{M\times N}^{-1} \circ {\mathcal Z}\circ \ov{\Psi})^{-1} \ .$$

The following sublemma is similar to Sublemma \ref{sublevev}.
\begin{sublem}
\begin{enumerate}
\item It holds that 
\begin{align*}
\tilde V_M&=V_M \ten \Omega_{(2)}^*(N,\F_N) \\
\tilde W_M&=W_M \ten \Omega_{(2)}^*(N,\F_N)
\end{align*}
an that $\tilde \alpha_M^L=-\alpha_M^L$.
\item The operator $\tilde \alpha_M^L$ anticommutes with $\D^{bd}_{\ra M\times N}$ and $\tau_{\ra M\times N}$ and  commutes with $\alpha_{M\times N}$.
\end{enumerate} 
\end{sublem}

\begin{proof}

Let $\alpha_1,\alpha_2 \in \Omega^{ev}(\ra M,\F_M),~ \beta_1,\beta_2 \in \Omega^+(N,\F_N)$ and set  $\omega:=\alpha_1 \wedge \beta_1+\alpha_2 \wedge \Gamma_N \beta_2$. We have that
\begin{align*}
\lefteqn{(\Phi_{M\times N}^{-1} \circ {\mathcal Z}\circ \ov{\Psi})(\alpha_1 \wedge \beta_1,\alpha_2 \wedge\beta_2)} \ .\\
&=(\Phi_{M\times N}^{-1} \circ {\mathcal Z})(\omega)\\
&=\frac{1}{\sqrt 2} \Phi_{M\times N}^{-1}(\omega + \tau_{M\times N}(\omega))\\
&=\tau_{\ra M\times N}(\omega) \ .
\end{align*}

For $(\dim \ra M)/2$ even $L \subset \Omega^{ev}(\ra M,\F_M)$. If $\alpha_1,~\alpha_2 \in L$, then $\tau_{\ra M\times N}(\omega) \in L^{\perp}\ten \Omega^*(N,\F_N)$. 

For $\alpha_1,\alpha_2 \in \Omega^{od}(\ra M,\F_M),~ \beta_1,\beta_2 \in \Omega^+(N,\F_N)$ and $\omega$ as before we have that
\begin{align*}
\lefteqn{(\Phi_{M\times N}^{-1} \circ {\mathcal Z}\circ \ov{\Psi})(\alpha_1 \wedge \beta_1, \alpha_2 \wedge\beta_2)} \ .\\
&=(\Phi_{M\times N}^{-1} \circ {\mathcal Z})(dx_1\wedge \omega)\\
&=\frac{1}{\sqrt 2} \Phi_{M\times N}^{-1}(dx_1\wedge\omega + \tau_{M\times N}(dx_1 \wedge \omega))\\
&=\omega \ .
\end{align*}

If $(\dim \ra M)/2$ is odd and $\alpha_1,~\alpha_2 \in L \subset \Omega^{od}(\ra M,\F_M)$, then $\omega \in L\ten \Omega^*(N,\F_N)$. 

From this one deduces (1).
Hence $\tilde \alpha_M^L$ anticommutes with $\tau_{\ra M \times N}=\Gamma_{\ra M}\tau_{\ra M}\tau_N$ and commutes with $d_{\ra M\times N}$. By Lemma \ref{hprop} the operator $\alpha_{M \times N}$ is diagonal with respect to the decomposition $\cH_{\ra M} \ten \Omega_{(2)}^*(N,\F_N) \oplus \cH_{\ra M}^{\perp} \ten \Omega_{(2)}^*(N,\F_N)$. Using this one gets (2).
\end{proof}

We set $\inv=\tilde \alpha_M^L$. By the Sublemma $\sigma_{\inv}(M\times N,\F_M\boxtimes \F_N)$ is well-defined. Lemma \ref{dirsumgen} and Lemma \ref{hprop} imply that $\alpha_{M\times N}$ and $\tilde \alpha_M^L$ restrict to involutions on $\tilde W_M \cap W_{M\times N}$. 

By Prop. \ref{propev} we get that
$$\sigma_{\inv}(M\times N,\F_M\boxtimes \F_N)=\sigma(M\times N,\F_M\boxtimes \F_N) \ .$$

Let $A_{M}^L$ be the canonical symmetric trivializing operator for $B(d_M^{sign})$ with respect to $\alpha_M^L$. 

By definition $\hat A_M^{L}=(\Psi \circ \Sigma_M)(\alpha_M^L \Gamma_{\ra M})(\Psi \circ \Sigma_M)^{-1}$. Hence
$$(\Phi_{M\times N}^{-1}\circ {\mathcal Z}\circ \Theta)(\hat A_M^L\oplus \hat A_M^L)(\Phi_{M\times N}^{-1}\circ {\mathcal Z}\circ \Theta)=\tilde \alpha_M^L \ .$$ 
Thus 
\begin{align*}
2[\D_M^{sign}(A_M^{L})] \ten [\D_N^{sign}]&=[\D_{M\times N}^{sign}\bigl(({\mathcal Z}\circ \Theta)(\hat A_M^L\oplus \hat A_M^L)({\mathcal Z}\circ \Theta)^{-1}\bigr)]\\
&=[\D_{M\times N}^{sign}(\Phi_{M\times N} \tilde \alpha_M^L \Phi_{M\times N}^{-1})]\\
&=\sigma_{\inv^{opp}}(M\times N,\F_M \boxtimes \F_N)\\
&=\sigma_{\inv}(M\times N,\F_M \boxtimes \F_N) \ .
\end{align*}
The first equation follows from the product formula in \S \ref{oddodd}. The last equation follows from eq. \ref{eqinvinvopp}.
\end{proof}

\section{Product formula for higher signatures}
\label{highsig}

In the following we give a slight generalization of the previous product formulas, which also applies to higher signatures. 

Let $\Ca$ be unital $C^*$-algebra and let $\varphi:\A \ten \B \to \Ca$ be a unital $C^*$-homomorphism. There is an induced map $\varphi_*:K_*(\A \ten \B) \to K_*(\Ca)$. 

The bundle $(\F_M \boxtimes \F_N) \ten_{\varphi}\Ca$ is a flat $\Ca$-vector bundle on $M \times N$. 

The proof of the following theorem is nearly literally as before, if at the right places one plugs in tensor products $\ten_{\varphi}\Ca$. Also as before, Assumption \ref{assunnec} in the statement of the theorem will be automatically fulfilled for the de Rham operator on $\Omega^*(\ra M \times N,(\F_{\ra M} \boxtimes \F_N) \ten_{\varphi}\Ca)$ if $M \times N$ is odd-dimensional.

\begin{theorem}
In the following we assume that the respective assumptions (i.e. Assumption \ref{ass} resp. Assumptions \ref{assodd} and \ref{assunnec}, depending on the dimensions of $M$ and $N$) hold for the de Rham operators on $\Omega^*(\ra M,\F_{\ra M})$ and on $\Omega^*(\ra M \times N,(\F_{\ra M} \boxtimes \F_N) \ten_{\varphi}\Ca)$. 

\begin{enumerate}
\item If $M$ and $N$ are even-dimensional, then
$$\varphi_*\bigl(\sigma(M,\F_M) \ten \sigma(N,\F_N)\bigr)=\sigma(M\times N,(\F_M \boxtimes \F_N) \ten_{\varphi}\Ca) \ .$$
\item If $M$ is even-dimensional and $N$ is odd-dimensional and $L$ is the positive eigenspace of the involution $\alpha_M\tau_{\ra M} \ten \Gamma_N\tau_N$ on $\cH_{\ra M \times N}\subset \Omega^*(\ra M \times N,(\F_{\ra M} \boxtimes \F_N) \ten_{\varphi}\Ca)$, then  
$$\varphi_*\bigl(\sigma(M,\F_M) \ten \sigma(N,\F_N)\bigr)=\sigma^L(M\times N,(\F_M \boxtimes \F_N) \ten_{\varphi}\Ca) \ .$$
\item  If $M$ is odd-dimensional and $N$ is even-dimensional and $L\subset \cH_{\ra M}$ is a Lagrangian, then  we can define $L_{\ten} \subset \cH_{\ra M \times N}$ as before and get
$$\varphi_*\bigl(\sigma^L(M,\F_M) \ten \sigma(N,\F_N)\bigr)=\sigma^{L_{\ten}}(M\times N,(\F_M \boxtimes \F_N) \ten_{\varphi}\Ca) \ .$$
\item
If $M$ and $N$ are odd-dimensional and $L \subset \cH_{\ra M}$ is a Lagrangian, then
$$2\varphi_*\bigl(\sigma^L(M,\F_M) \ten \sigma(N,\F_N)\bigr)=\sigma(M\times N,(\F_M \boxtimes \F_N) \ten_{\varphi}\Ca) \ .$$
\end{enumerate}
\end{theorem}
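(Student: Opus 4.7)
The plan is to deduce this theorem as a direct corollary of the four previously established product formulas (Theorem \ref{prodev} and Propositions \ref{prodevod}, \ref{prododev}, \ref{prododod}) by pushing forward along $\varphi$. The basic observation is that the homomorphism $\varphi:\A\ten\B \to \Ca$ induces a map $\varphi_*:K_*(\A\ten\B)\to K_*(\Ca)$, and at the level of Kasparov modules this pushforward is implemented by tensoring the Hilbert module $H$ over $\A\ten\B$ with $\Ca$ along $\varphi$, leaving the operator formally unchanged. On the geometric side, $L^2$-sections of $\F_M\boxtimes \F_N$ tensored over $\varphi$ with $\Ca$ are canonically $L^2$-sections of $(\F_M\boxtimes \F_N)\ten_\varphi \Ca$, and the Dirac/signature operators on these spaces are intertwined in the obvious way.

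First I would state and verify a compatibility lemma: for any regular selfadjoint Fredholm Dirac-type operator $\D$ on a Hilbert $\A\ten\B$-module $H$ of sections of a bundle $\E$, the operator $\D\ten_\varphi 1$ on $H\ten_\varphi \Ca$ represents $\varphi_*[\D]\in KK_*(\bbbc,\Ca)$, and in the case of APS-type boundary conditions for the signature operator, the trivializing operators $A$ also transport to $A\ten_\varphi 1$, so that $[\D_M^{sign}(A)\ten_\varphi 1]=\varphi_*[\D_M^{sign}(A)]$. The same applies for the cylindric perturbations. This is essentially a functoriality statement for the Baaj-Julg picture and for the Kasparov modules of \cite[Def. 2.4]{wa1}.

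Next I would observe that all the algebraic constructions in the proofs of Theorem \ref{prodev} and Propositions \ref{prodevod}, \ref{prododev}, \ref{prododod} -- the isomorphisms $\Psi,\Theta,\Sigma$, the involutions $\inv,\tilde\alpha_M,\tilde\alpha_M^L$, the decompositions of Lemmas \ref{dirsum} and \ref{dirsumgen}, and Lemma \ref{hprop} -- are built entirely from the Clifford, metric and flat connection data on $M$ and $N$; they commute with the action of $\A\ten\B$ and therefore survive unchanged after tensoring over $\varphi$ with $\Ca$. The Lagrangians used on the boundary also carry over: for item (3), $L\subset \cH_{\ra M}$ induces the Lagrangian $L\ten_\varphi \Ca\subset \cH_{\ra M}\ten_\varphi \Ca$ (which in turn defines $L_\ten$ after tensoring with harmonic forms on $N$); similarly for items (2) and (4). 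The content of Assumptions \ref{ass}, \ref{assodd} is by hypothesis imposed on the bundle $(\F_{\ra M}\boxtimes\F_N)\ten_\varphi \Ca$, and Assumption \ref{assunnec} in case (2) is automatic since $M\times N$ is odd-dimensional and the Lagrangian $L$ there is defined intrinsically from $\tilde\alpha_M\ten \Gamma_N\tau_N$.

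With these two ingredients in place, each of the four identities in the theorem is obtained by applying $\varphi_*$ to the corresponding identity of the previous section and then using the compatibility lemma on both sides to identify $\varphi_*[\D_{M\times N}^{sign}(\hat A_M)]$ with $[\D_{M\times N}^{sign}(\hat A_M\ten_\varphi 1)]$, i.e.\ with the signature class of the bundle $(\F_M\boxtimes\F_N)\ten_\varphi \Ca$. The main obstacle I would expect is not conceptual but bookkeeping: one must verify that Assumption \ref{ass}/\ref{assodd} for the de Rham operator on $\Omega^*(\ra M\times N,(\F_{\ra M}\boxtimes\F_N)\ten_\varphi\Ca)$ suffices to make Lemma \ref{dirsumgen} and Lemma \ref{hprop} go through after tensoring over $\varphi$, since closedness of the range of $d$ is not automatically preserved under $-\ten_\varphi\Ca$. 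However, since these decompositions and the invertibility of $\D^{bd}_{\ra M\times N}$ (resp.\ $\D^{sign}_{\ra M\times N}$) on the $V$-part are exactly what the Assumption postulates in the hypothesis of the theorem, one can simply run the proofs verbatim with the new bundle replacing $\F_M\boxtimes\F_N$ throughout, yielding the four equalities.
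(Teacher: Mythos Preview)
Your proposal is correct and arrives at the same approach the paper takes. The paper's own proof is a one-line remark that the argument is ``nearly literally as before, if at the right places one plugs in tensor products $\ten_{\varphi}\Ca$'', which is exactly your final conclusion: rerun the proofs of Theorem~\ref{prodev} and Propositions~\ref{prodevod}, \ref{prododev}, \ref{prododod} with the $\Ca$-bundle $(\F_M\boxtimes\F_N)\ten_\varphi\Ca$ in place of $\F_M\boxtimes\F_N$ on $M\times N$. You correctly flag that the naive ``apply $\varphi_*$ to the old identities'' route is blocked because the hypotheses of the earlier results require the closed-range Assumption for the $\A\ten\B$-bundle, which is not assumed here; and you correctly resolve this by rerunning the proofs. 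Your compatibility observation that $\varphi_*[\D]=[\D\ten_\varphi 1]$ is implicitly used in the paper as well, since the left-hand side of each identity lives in $K_*(\A\ten\B)$ before pushforward.
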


This result applies to higher signatures: 

Let $\tilde M$ resp. $\tilde N$ be a Galois covering of $M$ resp. $N$ and let  $\pi_M$ resp. $\pi_N$ be the group of deck transformations. By definition $\Pj_M=\tilde M \times_{\pi_M} C^*_r\pi_M$ is the associated Mishenko-Fomenko bundle and $\sigma(M,\Pj_M)$ is the higher signature class of $M$ associated to the covering. 

The group $\pi_{M\times N}:=\pi_M \times \pi_N$ is the decktransformation group with respect to the covering $\tilde M \times \tilde N \to M \times N$. Let $\Pj_{M\times N}$ be the corresponding Mishenko-Fomenko bundle. There is a canonical unital $C^*$-homomorphism $$\varphi:C_r^*\pi_M \ten C_r^* \pi_N \to C_r^*(\pi_{M \times N}) \ ,$$ 
and it holds that 
$$\Pj_{M\times N}=(\Pj_M\boxtimes \Pj_N)\ten_{\varphi} C_r^*(\pi_{M \times N}) \ .$$ 

Thus from the previous proposition one gets a product formula for higher signature classes. In this case (see \cite[Lemma 3.1]{llk}) Assumption \ref{ass} is equivalent to the $m$-th Novikov Shubin invariant $\alpha_m(\ra \tilde M)$ being $\infty^+$, whereas Assumption \ref{assodd} is equivalent to $\alpha_m(\ra \tilde M)=\alpha_{m+1}(\ra \tilde M)=\infty^+$. If the $m$-th Betti number $b_m(\ra \tilde M)$ vanishes, then $\cH_{\ra M}=0$, thus Assumption \ref{assunnec} is fulfilled. 
For products these conditions can be checked by using the product formulas for Novikov-Shubin invariants \cite[Theorem 2.55(3)]{l} and $L^2$-Betti numbers \cite[Theorem 1.35(4)]{l}. Examples for which the conditions are fulfilled can be found in \cite[p.~563]{llp}. 

In a similar way the product formula applies to twisted higher signatures as studied in \cite{lptwisthigh}.
In \cite[\S 2]{lptwisthigh} examples were given where the Laplacian on $\Omega^*(\ra M,\F_{\ra M} \ten_{\varphi}\Ca)$ is invertible. This implies that also the Laplacian on $\Omega^*(\ra M \times N,(\F_{\ra M} \boxtimes \F_N) \ten_{\varphi}\Ca)$ is invertible, thus the conditions of the theorem are fulfilled. 

Product formulas for geometric invariants are relevant for the following question: Assume that $M_1, M_2$ are non-isomorphic elements in a suitable category (topological spaces up to homotopy/homeomorphism, manifolds up to diffeomorphism, manifolds with boundary up to homotopy/homeomorphism/diffeomorphism etc.). Under which conditions on a closed manifold $N$ does it follow that are $M_1 \times N$, $M_2 \times N$ not isomorphic? See the motivating examples for the definition of the higher $\rho$-invariants given in \cite{we}.   

By applying the homotopy invariance result of \cite{llp} (which was proven there using different boundary conditions; see the end of \S \ref{normalization} for the justification of using it here)  we obtain the following corollary, which for simplicity we only formulate in the even-dimensional case and only for universal coverings:

\begin{cor}
Let $M_1, M_2$ be orientable even-dimensional manifolds with boundary having the same fundamental group $\pi_M$. Let $N$ be an orientable even-dimensional closed manifold with fundamental group $\pi_N$. Assume that the higher signature classes of $M_1, M_2, M_1 \times N, M_2 \times N$ are well-defined (with respect to the universal coverings). 

If the higher signature classes of $M_1, M_2$ do not agree up to sign in $K_0(C_r^*\pi_M) \ten \bbbq$ and the higher signature class of $N$ does not vanish in $K_0(C_r^*\pi_N) \ten \bbbq$, then $M_1 \times N$ is not homotopic to $M_2 \times N$ as a manifold with boundary. 
\end{cor}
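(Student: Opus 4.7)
The proof proceeds by contradiction: suppose that $M_1 \times N$ and $M_2 \times N$ were homotopy equivalent as manifolds with boundary. By the homotopy invariance of the higher signature class proved in \cite{llp}, whose applicability to the boundary conditions used here is ensured by the discussion at the end of \S \ref{normalization}, this would force
$$\sigma(M_1 \times N,\Pj_{M_1\times N})\;=\;\pm\,\sigma(M_2 \times N,\Pj_{M_2\times N}) \quad \text{in } K_0(C_r^*\pi_{M\times N})\ten\bbbq \ ,$$
the sign depending on whether the equivalence preserves or reverses orientation.

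Since $M_1, M_2, N$ are all even-dimensional, case (1) of the product formula identifies each side with $\varphi_*\bigl(\sigma(M_i,\Pj_{M_i})\ten\sigma(N,\Pj_N)\bigr)$, where $\varphi\colon C_r^*\pi_M \ten C_r^*\pi_N \to C_r^*\pi_{M\times N}$ is the canonical map; this map is an isomorphism because the reduced $C^*$-algebra of a direct product of discrete groups is naturally the minimal tensor product of the factor algebras. Applying $\varphi_*^{-1}$ thus converts the above equality into
$$\bigl(\sigma(M_1,\Pj_{M_1})\mp\sigma(M_2,\Pj_{M_2})\bigr)\ten\sigma(N,\Pj_N)\;=\;0 \quad\text{in } K_0(C_r^*\pi_M\ten C_r^*\pi_N)\ten\bbbq$$
for a suitable choice of sign, with both tensor factors nonzero by hypothesis.

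The main obstacle---and the technical heart of the corollary---is to deduce a contradiction from the vanishing of such a pure rank-one tensor whose two factors are individually nonzero. Equivalently, one needs the external $K$-theoretic product map
$$K_0(C_r^*\pi_M)\otimes_\bbbq K_0(C_r^*\pi_N) \;\lr\; K_0(C_r^*\pi_M \ten C_r^*\pi_N) \ten \bbbq$$
to be injective on such decomposable elements. One way to secure this is to exhibit a continuous cyclic cocycle $\psi$ on a dense smooth subalgebra of $C_r^*\pi_N$ with $\langle \sigma(N,\Pj_N),\psi\rangle \ne 0$; the associated slant product, realized through the Chern--Connes character, sends $\alpha \ten \sigma(N,\Pj_N)$ to a nonzero scalar multiple of $\alpha$, which contradicts the vanishing of the tensor product together with $\alpha \ne 0$. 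Alternatively, under appropriate hypotheses on the groups, one can invoke the Schochet Künneth theorem for $C^*$-algebra $K$-theory to obtain the required injectivity directly. Either route delivers the desired contradiction and rules out the assumed homotopy equivalence.
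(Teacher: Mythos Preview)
Your outline follows exactly the argument the paper has in mind: it offers no detailed proof of the corollary, presenting it simply as a consequence of the product formula together with the homotopy invariance result of \cite{llp}. Your contradiction set-up, the identification $C_r^*(\pi_M\times\pi_N)\cong C_r^*\pi_M\ten C_r^*\pi_N$, and the use of case~(1) of the theorem are all correct and match the intended reasoning.

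Where your write-up becomes problematic is precisely at the point you yourself flag as ``the main obstacle''. Neither of the two remedies you propose actually closes the gap at the stated level of generality. The existence of a continuous cyclic cocycle detecting $\sigma(N,\Pj_N)$ is not guaranteed by the hypothesis $\sigma(N,\Pj_N)\neq 0$ in $K_0(C_r^*\pi_N)\ten\bbbq$; producing such cocycles is a highly nontrivial matter (rapid decay, polynomial cohomology, etc.), and for many groups no such pairing is known. Likewise, Schochet's K\"unneth theorem requires one factor to lie in the bootstrap class, which is not known for $C_r^*\pi$ of an arbitrary discrete group. So as written your proof is not complete: you have reduced the corollary to the injectivity of the exterior product $K_0(C_r^*\pi_M)\ten_{\bbbq} K_0(C_r^*\pi_N)\to K_0(C_r^*\pi_M\ten C_r^*\pi_N)\ten\bbbq$ on simple tensors, but this injectivity is an additional assumption, not a theorem in the generality stated.

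In fairness, the paper itself does not address this point; it treats the corollary as an immediate consequence and does not isolate or resolve the K\"unneth-type step. Your proposal therefore matches the paper's approach while making explicit a hypothesis that is tacitly being used.
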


The non-vanishing of higher signature classes for manifolds with boundary can be proven by using the higher Atiyah-Patodi-Singer index theorem of Leichtnam-Piazza, see \cite{llp} and references therein.

The example in \cite[p.~624\,f.]{llp} illustrates the corollary. While no detailed argument was given there, for the calculation of the relevant higher signatures a product formula for Chern characters and $\eta$-forms might have been used. Alternatively one may use the above product formula.

\section{Further remarks}

\subsection{Stabilization in the odd case}
\label{stab} 

Let $\dim M=2m+1$.

We sketch the stabilization trick and derive product formulas if Assumption \ref{assodd} holds for the de Rham operator on $\Omega^*(\ra M,\F_{\ra M})$ but Assumption \ref{assunnec} does not hold. The stabilization comes at a price: We need to require that Assumption \ref{ass} holds for the de Rham operator on $\Omega^*(N,\F_N)$ if $N$ is odd-dimensional and Assumption \ref{assodd} if $N$ is even-dimensional. If $N$ is odd-dimensional, it follows that the signature class $\sigma(N,\F_N)$ vanishes. This is already suggested by the product formula in Prop. \ref{prododod}, where the left hand side depends on a Lagrangian while the right hand side is not. If $N$ is even-dimensional, it follows that $\cH_N:=\Ker \Delta_N \cap \Omega^{(\dim N)/2}(N,\F_N)$ is a projective $\B$-module.

The construction relies on the concept of ``stable'' Lagrangians \cite[\S 3]{llp}. While clearly inspired by it, our stabilization procedure differs from the one in \cite{lp6} and avoids the additional choice of a submodule as specified in \cite[Prop. 11]{lp6}.

Let $X$ be an odd-dimensional manifold with boundary and assume that the middle degree homology $H$ of $\ra X$ is non-zero. Let $\F_X=\A^k$. Then $\cH_{\ra X}=H \ten \A^k$. If $L_0 \in H$ is a Lagrangian, then $\sigma^{L_0\ten\A^k}(X,\F_X)=0$. For the following choose a trivialization $H=\bbbc^{2n}$, where $\bbbc^{2n}$ is endowed with the standard skewhermitian form (which is induced by the standard symplectic form on $\bbbr^{2n}$).

Consider the disjoint union $M \cup X$. Let $k$ be large enough such that there is a Lagrangian $L \subset \cH_{\ra M} \oplus \A^{2nk}$. The existence follow from \cite[Lemma 3.4]{llp} since $[\D^{sign}_{\ra M}]=0$ by the bordism invariance of the index. We define
$$\sigma^L(M,\F_M):=\sigma^L(M\cup X,\F_M \cup \F_X) \ .$$

It is often useful to choose $X$ with dimension different from $M$, see below. Thus the right hand side requires a straightforward extension of the definition of the signature class to accommodate for components of differing dimension.
 
One checks that the definition makes sense. It does not depend on the choice of $X$ nor of the trivialization $H\cong \bbbc^{2n}$, only on the choice of $L$ in $\cH_{\ra M} \oplus \A^{2nk}$. There is a stabilization argument (see \cite[\S 3]{llp}) which allows to make the construction independent of the choice of $n,k$ as well. 

In order to get the product formula we use $X:=[0,1]$ for the definition of the signature class of $(M,\F_M)$. The homology of $\ra X$ is isomorphic to $\bbbc^2$, which we endow with the standard skewhermitian form. We identify $\bbbc^2 \ten \F_{\ra X}$ with $\A^{2k}$.

Now we apply the product formula to $\sigma^L(M\cup X,\F_M \cup \F_X)$. The additional assumption on $N$ implies that the de Rham operator on $\Omega^*(\ra(X\times N),\F_{\ra X}\boxtimes \F_N)$ fulfills Assumption \ref{ass} if $N$ is odd-dimensional and Assumption \ref{assodd} if $N$ is even-dimensional. This is clearly necessary for the application of the product formula. We used that $\ra(X \times N)=N \cup N$.

Before we can formulate the result we need an additional definition for $N$ even-dimensional:
Let $L \subset \cH_{\ra M} \oplus \A^{2k}$ be a Lagrangian. Then $L_{\ten} \in \cH_{\ra M\times N} \oplus \A^{2k}\ten \cH_N$. Since $\cH_N$ is projective, we may embed $\cH_N$ into $\B^j$ for $j$ large enough. Let $V$ be the orthogonal complement of $\cH_N$ in $\B^j$ and let $L_0 \in \bbbc^{2}$ be a Lagrangian. We define the Lagrangian $$\tilde L_{\ten}=L_{\ten} \oplus L_0 \ten \A^k \ten V \subset \cH_{\ra M \times N} \oplus \A^{2k} \ten \B^j \ .$$

\begin{prop}
\label{propgen}
Let $M$ be odd-dimensional and let Assumption \ref{assodd} hold for the de Rham operator on $\Omega^*(\ra M,\F_{\ra M})$.

\begin{enumerate}
\item
Let $N$ be odd-dimensional, and let Assumption \ref{ass} hold for the de Rham operator on $\Omega^*(N,\F_N)$. Then $\sigma(N,\F_N)=0$ and $\sigma(M \times N,\F_M \boxtimes \F_N)=0$. 
\item
Let $N$ be even-dimensional, and let Assumption \ref{assodd} hold for the de Rham operator on $\Omega^*(\ra M \times N,\F_{\ra M}  \boxtimes \F_N)$ and for the de Rham operator on $\Omega^*(N,\F_N)$. 
Then $$\sigma^L(M,\F_M) \ten \sigma(N,\F_N)=\sigma^{\tilde L_{\ten}}(M \times N,\F_M \boxtimes \F_N) \in K_1(\A \ten \B) \ .$$
\end{enumerate}
\end{prop}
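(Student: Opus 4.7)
Both parts are obtained by applying the product formulas of the preceding section (Prop.~\ref{prododod} for part~(1), Prop.~\ref{prododev} for part~(2)) to the stabilized pair $(M \cup X) \times N$ rather than directly to $M \times N$. The stabilization is engineered so that Assumption~\ref{assunnec}, which is not available on $\ra M$, does hold on $\ra(M \cup X)$. The extra hypotheses on $N$ (and on $\ra M \times N$) ensure that the remaining closed-range assumptions hold on every component of the disjoint boundary $\ra(M \cup X) \times N = \ra M \times N \sqcup N \sqcup N$, so that the earlier propositions can indeed be invoked. The conclusions for $M \times N$ are then read off by using the defining identity $\sigma^L(M, \F_M) := \sigma^L(M \cup X, \F_M \cup \F_X)$ on the left-hand side and additivity of signature classes over disjoint unions on the right.

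\textbf{Part (1).} Applying Prop.~\ref{prododod} to $(M \cup X)$ and $N$ and using additivity yields
$$2\sigma^L(M, \F_M) \ten \sigma(N, \F_N) = \sigma(M \times N, \F_M \boxtimes \F_N) + \sigma(X \times N, \F_X \boxtimes \F_N).$$
The first step is to establish $\sigma(N, \F_N) = 0$: for a closed odd-dimensional $N$ under the middle-degree closed-range hypothesis, Poincar\'e duality (the Hodge star) supplies an explicit Lagrangian-like pairing between the harmonic forms in degrees $m-1$ and $m$, which by a standard symmetry argument trivializes the $K_1$-class of the odd signature operator. Once $\sigma(N,\F_N)=0$, the left-hand side vanishes. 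Applying Prop.~\ref{prododod} once more, this time with $X$ playing the role of the odd-dimensional manifold, yields $\sigma(X \times N, \F_X \boxtimes \F_N) = 2\sigma^{L'}(X, \F_X)\ten\sigma(N,\F_N) = 0$, and hence $\sigma(M \times N, \F_M \boxtimes \F_N) = 0$.

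\textbf{Part (2).} Applying Prop.~\ref{prododev} to $(M \cup X)$ and $N$ produces a Lagrangian $L_{\ten} \subset \cH_{\ra(M \cup X) \times N}$ with
$$\sigma^L(M, \F_M) \ten \sigma(N, \F_N) = \sigma^{L_{\ten}}\bigl((M \cup X) \times N, (\F_M \cup \F_X) \boxtimes \F_N\bigr).$$
Decomposing the middle-degree harmonic forms along the boundary components as $\cH_{\ra M \times N} \oplus \cH_N \oplus \cH_N$ and splitting $L_{\ten}$ accordingly, additivity rewrites the right-hand side as the sum of signature contributions from $M \times N$ and from $X \times N$. Under the identifications $\cH_{\ra X} \cong \A^{2k}$ and $\cH_N \hookrightarrow \B^j$ (with orthogonal complement $V$), the Lagrangian restricted to the $X \times N$ piece takes precisely the form $L_0 \ten \A^k \ten V$ that appears in the definition of $\tilde L_{\ten}$. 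Combining this contribution with the $M \times N$ piece reproduces $\sigma^{\tilde L_{\ten}}(M \times N, \F_M \boxtimes \F_N)$, yielding the claimed formula.

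\textbf{Main obstacle.} The principal technical difficulty is in part~(2): one must show that the Lagrangian $L_{\ten}$ produced by Prop.~\ref{prododev} on the disjoint boundary $\ra(M \cup X) \times N$ matches, after splitting, the stabilized Lagrangian $\tilde L_{\ten}$ defined abstractly on $\cH_{\ra M \times N} \oplus \A^{2k}\ten \B^j$. Verifying the compatibility between the \emph{geometric} stabilization (joining $X=[0,1]$) and the \emph{algebraic} stabilization (embedding $\cH_N \subset \B^j$ with complement $V$) requires invoking the independence of $\sigma^L$ from the specific choices of $k$, $j$, and of the trivializations, which was noted after the stabilized definition of the signature class.
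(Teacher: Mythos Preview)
Your outline for part~(1) is essentially the paper's argument; the only difference is your justification of $\sigma(N,\F_N)=0$, which is more elaborate than needed. The paper simply observes that under Assumption~\ref{ass} there exists a trivializing operator for $\D_N^{sign}$ (the canonical one associated to the involution $\alpha$ on $W_N$), and a selfadjoint Fredholm operator that becomes invertible after a bounded selfadjoint perturbation has trivial class in $K_1$.

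For part~(2) there is a genuine gap. The Lagrangian $L_{\ten}$ lives in $\cH_{\ra M\times N}\oplus(\A^{2k}\ten\cH_N)$, whereas $\tilde L_{\ten}$ lives in $\cH_{\ra M\times N}\oplus(\A^{2k}\ten\B^j)$; these are different ambient modules, so one cannot simply ``split $L_{\ten}$ accordingly'' and read off $\tilde L_{\ten}$. More fundamentally, $L$ itself need not decompose over the two summands $\cH_{\ra M}$ and $\A^{2k}$ --- that was the whole point of stabilizing --- so $L_{\ten}$ does not split over the boundary components $\ra M\times N$ and $\ra X\times N$, and additivity of the signature class over disjoint unions does not apply in the way you suggest. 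Your claim that the $X\times N$ piece of $L_{\ten}$ equals $L_0\ten\A^k\ten V$ is not even well-typed: $L_{\ten}$ never touches $V$.

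The paper's remedy is a further stabilization. One considers $Y=(M\times N)\cup(X\times N)\cup X$ with bundle $\F_Y=(\F_M\boxtimes\F_N)\cup(\F_X\boxtimes\F_N)\cup(\F_X\ten\B^j)$, so that $\cH_{\ra Y}\cong\cH_{\ra M\times N}\oplus(\A^{2k}\ten\cH_N)\oplus(\A^{2k}\ten\B^j)$. In this common ambient module one forms $L_1$ by extending $L_{\ten}$ by the split Lagrangian $L_0\ten\A^k\ten\B^j$ on the third summand, and $L_2$ by extending $\tilde L_{\ten}$ by $L_0\ten\A^k\ten\cH_N$ on the second summand. Then $\sigma^{L_1}(Y,\F_Y)$ equals the left-hand side (the extra component contributes zero since $\sigma^{L_0\ten\A^k}(X,\cdot)=0$) and $\sigma^{L_2}(Y,\F_Y)$ equals the right-hand side by the stabilized definition. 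The remaining and substantive point is $[L_1-L_2]=0$: the paper shows this by an explicit homotopy, noting that $L_2$ arises from $L_1$ by swapping the two copies of $\A^{2k}\ten\cH_N$ sitting inside the last two summands, and this swap is connected to the identity through a one-parameter family of unitaries preserving the skewhermitian form. Invoking only ``independence from choices'' does not supply this step; the homotopy of Lagrangians is the actual content.
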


We leave it to the reader to formulate a generalization of the proposition involving tensor products as in \S \ref{highsig}.

\begin{proof}
(1) If Assumption \ref{ass} holds, then there is a trivializing operator for $\D^{sign}_N$. Thus its index vanishes. Choose a Lagrangian $L \subset \cH_{\ra M}\oplus \A^{2k}$. We get that 
\begin{align*}
0&=\sigma^L(M\cup X,\F_M\cup \F_X) \ten \sigma(N,\F_N)\\
&=\sigma((M\cup X)\times N,(\F_M \cup \F_X)\boxtimes \F_N)\\
&=\sigma(M\times N,\F_M\boxtimes \F_N) + \sigma(X\times N,\F_X \boxtimes \F_N)\\
&=\sigma(M\times N,\F_M\boxtimes \F_N) \ .
\end{align*}
Here the second equality follows from Prop. \ref{prododod}.

(2) From the definition of $\sigma^L(M,\F_M)$ from above and Prop. \ref{prododev} we get that 
$$\sigma^L(M,\F_M) \ten \sigma(N,\F_N)=\sigma^{L_{\ten}}((M \cup X) \times N,(\F_M \cup \F_X) \boxtimes \F_N) \ .$$ 

We consider the manifold $Y:=(M \times N) \cup (X\times N) \cup X$ and the bundle 
$\F_Y:=(\F_M\boxtimes \F_N) \cup (\F_X \boxtimes \F_N) \cup (\F_X \ten \B^j)$ on $Y$.

We define Lagrangians $L_1,L_2 \subset \cH_{\ra M\times N} \oplus (\A^{2k}\ten \cH_N) \oplus (\A^{2k}\ten \B^j)$
by $$L_1=\{(x,y,z)~|~(x,y) \in L_{\ten},~ z \in L_0\ten \A^k \ten\B^j\}$$
$$L_2=\{(x,y,z)~|~(x,z) \in \tilde L_{\ten},~ y \in L_0\ten \A^k \ten\cH_N \}$$
 
It holds that 
$$\sigma^{L_1}(Y,\F_Y)=\sigma^{L_{\ten}}((M \cup X) \times N,(\F_M \cup \F_X) \boxtimes \F_N)$$
and, by definition, that
$$\sigma^{L_2}(Y,\F_Y)=\sigma^{\tilde L_{\ten}}(M\times N,\F_M\boxtimes \F_N) \ .$$

It remains to calculate $[L_1-L_2]$. 

Note that $L_2$ is constructed from $L_1$ by interchanging the last two coordinates on the subspace $\cH_{\ra M\times N} \oplus (\A^{2k}\ten \cH_N) \oplus (\A^{2k}\ten \cH_N)$. Let $U(t)$ be the unitary which equals the identity on $\cH_{\ra M\times N} \oplus (\A^{2k}\ten \cH_N) \oplus (\A^{2k}\ten V)$ and on  $\cH_{\ra M\times N} \oplus (\A^{2k}\ten \cH_N) \oplus (\A^{2k}\ten \cH_N)$ equals 
$$\left(\begin{array}{ccc} 1 & 0 & 0 \\ 0 & e^{2it}\cos(t) &\sin(t) \\ 0 & - e^{2it}\sin(t) & \cos(t) \end{array}\right) \ .$$
Then $U(t)L_1$ is a path of Lagrangians with $U(0)L_1=L_1$ and $U(\frac \pi 2)L_1=L_2$. Thus $[L_1-L_2]=0$. 
\end{proof}

\subsection{Normalization and homotopy invariance for higher signatures}
\label{normalization}
For the proof of the homotopy invariance of the higher signatures in \cite{llp} the normalization of \cite{hs} of the signature operator and the grading was used. We recall it here: In the even case define $D\alpha=i^p d\alpha$ for $\alpha \in \Omega^p(M,\Pj_M)$ and set $d_M^{sign,HS}:=D+D^*$. Define the grading operator $\tau_M^{HS}$ by $\tau_M^{HS}\alpha=i^{-p(n-p)}*\! \alpha$, where $*$ is the (standard) Hodge duality operator. (It differs from the one in \cite[Def. 3.57]{bgv}, which agrees with our $\tau_M$.) Let $U$ be the unitary defined by $U\alpha=i^{p(p-1)/2}\alpha$. Then $UdU^*=D$, hence $Ud_{M}^{sign}U^*=d_M^{sign,HS}$. Furthermore $U\tau_MU^*=(-1)^{n/2}\tau_M^{HS}$ with $n=\dim M$. The Clifford operations are also unitarily equivalent, since they are determined by signature operator. Hence for the canonical symmetric trivializing operator $A$ with respect to $\alpha_M$ we get that $[\D_M^{sign,HS}(U^*AU)]=(-1)^{n/2}[\D_M^{sign}((-1)^{n/2}A)]$. The right hand side equals $(-1)^{n/2}\sigma(M,\Pj_M)$ and the left hand side equals the  signature class defined in \cite{llp}. Thus both classes agree up to sign and in particular agree in the classical case, when the dimension is divisible by four. 

The homotopy invariance of the Chern character of $[\D_M^{sign,HS}(U^*AU)]$, and hence of the Chern character of $\sigma(M,\Pj_M)$, follows from the equality established in the Appendix of \cite{llp}. The equality was proven there under slightly stronger conditions. However it seems that the proof can be adapted as needed here. It also seems to the author that the proof already shows the equality on the level of $K$-theory classes.

We leave the consideration of the odd case to the interested reader.

\textsc{Leibniz-Arbeitsstelle Hannover\\der G\"ottinger Akademie der Wissenschaften\\
Waterloostr. 8\\
30169 Hannover\\
Germany} 

\textsc{Email: wahlcharlotte@googlemail.com}

\end{document}